\numberwithin{equation}{section}
\definecolor{darkred}{rgb}{0.8,0,0}
\DeclareMathOperator{\Span}{\operatorname{span}}
\DeclareMathOperator{\diag}{\operatorname{diag}}
\theoremstyle{definition}
\newtheorem{definition}{Definition}[section]
\newtheorem{remark}[definition]{Remark}
\newtheorem{example}[definition]{Example}
\newtheorem{theorem}[definition]{Theorem}
\newtheorem{lemma}[definition]{Lemma}
\newtheorem{corollary}[definition]{Corollary}
\title{Mass lumping and outlier removal strategies for complex geometries in isogeometric analysis}
\author[1]{Yannis Voet \thanks{yannis.voet@epfl.ch}}
\author[1]{Espen Sande \thanks{espen.sande@epfl.ch}}
\author[1]{Annalisa Buffa \thanks{annalisa.buffa@epfl.ch}}
\affil[1]{\small MNS, Institute of Mathematics, École polytechnique fédérale de Lausanne, Station 8, CH-1015 Lausanne, Switzerland}
\date{\today}
\begin{document}

\maketitle

\begin{abstract}
Mass lumping techniques are commonly employed in explicit time integration schemes for problems in structural dynamics and both avoid solving costly linear systems with the consistent mass matrix and increase the critical time step. In isogeometric analysis, the critical time step is constrained by so-called ``outlier'' frequencies, representing the inaccurate high frequency part of the spectrum. Removing or dampening these high frequencies is paramount for fast explicit solution techniques. In this work, we propose mass lumping and outlier removal techniques for nontrivial geometries, including multipatch and trimmed geometries. Our lumping strategies provably do not deteriorate (and often improve) the CFL condition of the original problem and are combined with deflation techniques to remove persistent outlier frequencies. Numerical experiments reveal the advantages of the method, especially for simulations covering large time spans where they may halve the number of iterations with little or no effect on the numerical solution.

\noindent \textbf{Keywords}:
Isogeometric analysis, Explicit dynamics, Mass lumping, Mass scaling, Outlier removal, Trimming
\end{abstract}

\section{Introduction and background}
Isogeometric analysis is a discretization technique for solving partial differential equations (PDEs) that relies on spline functions such as B-splines and non-uniform rational B-splines (NURBS) both for parameterizing the geometry and representing the solution \citep{hughes2005isogeometric, cottrell2009isogeometric}. Spline functions used in isogeometric analysis offer distinctive advantages over Lagrange polynomials used in classical finite element discretizations, including exact representation of common geometries and superior approximation properties \cite{bazilevs2006isogeometric, bressan2019approximation, sande2020explicit}. In structural dynamics, the advantages of isogeometric analysis were already evidenced in \citep{cottrell2006isogeometric, hughes2008duality, hughes2014finite} where maximally smooth spline approximations removed the so-called ``optical branches'' from the discrete spectrum, a typical artifact of classical finite element discretizations. As a matter of fact, nearly all discrete eigenvalues approximate the continuous eigenvalues with high accuracy except for the largest ones. They form noticeable spikes in the upper part of the spectrum and were coined ``outlier'' eigenvalues for this very reason \cite{cottrell2006isogeometric}. These inaccurate eigenvalues grow with the mesh size and spline order \cite{gallistl2017stability} thereby severely constraining the critical time step of explicit time integration schemes, often referred to as the Courant–Friedrichs–Lewy (CFL) condition. For example, for undamped dynamical systems, the critical time step of the central difference method is
\begin{equation}
\label{eq: CFL_central_difference}
    \Delta t_c= \frac{2}{\sqrt{\lambda_n}}
\end{equation}
where $\lambda_n$ is the largest eigenvalue of the discrete system \citep{hughes2012finite, bathe2006finite}.

Since the advent of isogeometric analysis, much effort has focused on removing the outliers from the discrete spectrum. A nonlinear spline parametrization was first proposed in \cite{cottrell2006isogeometric} by uniformly distributing control points and changing the geometry parametrization but defeats the spirit of isogeometric analysis. The method also reportedly undermines the accuracy of the low frequencies and modes \cite{hiemstra2021removal}. A similar strategy was later proposed in \cite{chan2018multi} by constructing ``smoothed'' knot vectors that are approximations to optimal knot vectors (related to some $n$-width optimal spline spaces) but suffers from similar drawbacks. Instead, in \cite{sande2019sharp}, the authors used the $n$-width optimal spline spaces defined in \cite{floater2019optimal} to remove outliers, and these spaces were later proven to be outlier-free in \cite{manni2022application} without loss of accuracy in the low frequencies. These optimal spaces mimic the true eigenfunctions by imposing certain higher-order derivatives to be zero at the boundary. Bases for these spaces were constructed in \cite{takacs2016approximation,floater2019optimal,sogn2019robust,manni2022application} by using certain symmetry properties of B-splines. A similar strategy was numerically observed to be outlier-free in \cite{hiemstra2021removal} for the Laplacian, however, some outliers were still observed for the biharmonic problem with the spline spaces proposed in \cite{hiemstra2021removal}. By mimicking the properties of the $n$-width optimal spline spaces in \cite{floater2017optimal}, outliers for the biharmonic problem were later completely removed in \cite{manni2023outlier}. In both \cite{hiemstra2021removal} and \cite{manni2023outlier}, the basis construction resembles the MDB-spline extraction technique developed in \cite{toshniwal2020multi}. A related outlier removal technique was devised in \cite{deng2021boundary}, where the authors weakly impose the constraints coming from the true eigenfunctions by penalizing high order derivatives near the boundary. 
This same strategy was later employed in \cite{nguyen2022variational} in the multipatch setting by weakly enforcing $C^{p-1}$ continuity at patch interfaces to remove interior outliers. These penalization techniques often take the form of mass scaling, which is also widely employed for classical finite element methods \citep{macek1995mass, olovsson2005selective, stoter2022variationally}. Rather than completely removing outliers, they strongly mitigate them but also require heuristics or dedicated algorithms for computing the generally unknown penalization parameters. We note that these approaches could be combined to mitigate both boundary and interior outliers resulting from $C^0$ continuity at patch interfaces. Also in the context of classical finite element methods, an eigenvalue deflation technique was proposed in \cite{tkachuk2014local,gonzalez2020large} by explicitly computing the largest eigenvalues and eigenmodes. However, the authors do not discuss the computational overhead in their experiments and applying their method to the global assembled mass matrix is generally infeasible due to the large number of inaccurate high frequencies in classical $C^0$ finite element methods.

Outlier removal strategies are generally motivated for one-dimensional problems and then extended to higher dimensions via a tensor product construction. This construction, however, inherently limits the applicability of the methods to trivial single-patch geometries and separable coefficient functions. One issue in extending these approaches to nontrivial problems lies in the definition of outliers. For nontrivial geometries, outliers are often smoothened out and the spectrum generally does not feature any spikes, although changes in curvature are sometimes noticeable. Outliers then lose the intrinsic property that characterized them and their identification becomes ambiguous. Straightforwardly applying the constructions proposed in \citep{hiemstra2021removal, manni2022application, deng2021boundary} to nontrivial geometries generally does not yield satisfactory results, suggesting that the support of the outlier eigenfunctions might stretch into the domain's interior.

For applications in explicit dynamics, restrictive CFL conditions are not the only issue. Due to the inherent cost of ``exactly'' solving linear systems with the mass matrix \citep{collier2012cost, collier2013cost}, obtaining an easily invertible (preferably diagonal or tridiagonal) mass matrix is paramount. Mass lumping has historically been used for approximating the consistent mass matrix by a diagonal (lumped) mass matrix. Among the scores of methods proposed in the 1970s for classical finite element methods, only a handful are applicable to isogeometric analysis due to the non-interpolatory nature of the basis functions. Among them is the classical row-sum technique \citep{zienkiewicz2005finite, hughes2012finite}. Provably, the row-sum technique does not worsen the CFL condition of the consistent mass \cite{voet2023mathematical}. However, for isogeometric analysis, the method performs poorly in higher dimensions and strong numerical evidence suggests it reduces the converge rate to quadratic order, independently of the spline order \citep{cottrell2006isogeometric}; a property only proved for 1D problems and low spline orders \cite{cottrell2006isogeometric}. Since then, much research effort has focused on devising more accurate and potentially high order mass lumping schemes for isogeometric analysis. Cottrell et al. \cite{cottrell2009isogeometric} first suggested constructing diagonal mass matrices by using dual basis functions as test functions in a Petrov-Galerkin framework. However, computer implementations come with all sorts of difficulties and initially the idea did not gain much momentum until it was taken up again in \cite{anitescu2019isogeometric} with promising results, sparking a surge of interest in dual bases. In \citep{nguyen2023towards}, high order convergence is achieved by combining (approximate) dual basis functions with the row-sum technique. Unfortunately, the method is currently limited to single-patch geometries and extensions to multipatch or trimmed geometries are not in sight. Even in the single-patch case, several technical difficulties still hold it back, such as the imposition of boundary conditions, which is the focus of ongoing research \cite{hiemstra2023higher}. In another line of research, families of banded and Kronecker product matrices were constructed in \cite{voet2023mathematical} by increasing the bandwidth of the row-sum lumped mass matrix and were shown to significantly improve the accuracy. Unfortunately, such improvements are only realized on trivial geometries and are tied to an improvement of the constant instead of the convergence rate.

Mass lumping may sometimes dramatically impact the CFL condition. For trimmed geometries \cite{marussig2018review}, Leidinger \cite{leidinger2020explicit} first showed that the CFL condition was not affected by small trimmed elements if the mass matrix was lumped provided the spline's order and smoothness were sufficiently large with respect to the order of the differential operator. This finding was further supported by the studies in \citep{coradello2021accurate, stoter2023critical, radtke2024analysis} but also raised concerns over the accuracy of the smallest eigenvalues and modes. Devising accurate mass lumping/scaling techniques for trimmed geometries is an ongoing challenge.

Fast solution methods in explicit dynamics usually combine outlier removal, mass scaling and mass lumping in a sometimes ad hoc fashion. In this article, we propose such a strategy for complex geometries in isogeometric analysis and is substantiated by a strong mathematical foundation. Our contributions are twofold: firstly, we extend the mass lumping techniques proposed in \cite{voet2023mathematical} to nontrivial single-patch, multipatch and trimmed geometries and secondly propose an algebraic outlier removal strategy compatible with any lumping technique. Although this method has been independently proposed for classical finite element discretizations \cite{tkachuk2014local,gonzalez2020large}, we generalize it and thoroughly discuss its implementation and computational cost. The article is structured as follows: after describing the model problem and its discretization in Section \ref{se: model_problem}, we recall in Section \ref{se: mass_lumping} the mass lumping techniques devised in \cite{voet2023mathematical} and later extend them to nontrivial single-patch and multipatch problems. Although these techniques provably do not worsen the CFL condition of the original problem, they may not significantly improve it either. Thus, they are combined in Section \ref{se: outlier_removal} with outlier removal techniques that deflate the spectrum from persistent outlier eigenvalues and generalize the method proposed in \cite{tkachuk2014local,gonzalez2020large}. Contrary to classical $C^0$ finite elements, isogeometric discretizations typically feature a small number of rapidly increasing eigenvalues towards the end of the spectrum, for which deflation techniques are well-suited. Section \ref{se: numerical_experiments} gathers some numerical experiments illustrating the theoretical findings and demonstrating the advantages of the method. Finally, conclusions are drawn in Section \ref{se: conclusion}.

\section{Model problem and its discretization}
\label{se: model_problem}
In this article, we consider hyperbolic PDEs from structural dynamics. Their simplest instance is the classical wave (or acoustic) equation, which we will select as model problem. Let $\Omega \subset \mathbb{R}^d$ be an open connected domain in $d$-dimensional space with Lipschitz boundary and let $I=[0,T]$ be the time domain with $T>0$ denoting the final time. We look for $u \colon \Omega \times [0,T] \to \mathbb{R}$ such that 
\begin{align}
 \rho(\mathbf{x})\partial_{tt} u(\mathbf{x},t)-\kappa(\mathbf{x})\Delta u(\mathbf{x},t) &=f(\mathbf{x},t) & &\text{ in } \Omega \times (0,T], \label{eq: wave_equation} \\
 u(\mathbf{x},t)&=0 & &\text{ on } \partial \Omega \times (0,T], \nonumber\\
 u(\mathbf{x},0)&=u_0(\mathbf{x}) & &\text{ in } \Omega,  \nonumber\\
 \partial_t u(\mathbf{x},0)&=v_0(\mathbf{x}) & &\text{ in } \Omega, \nonumber
\end{align}
where $u_0$ and $v_0$ are some initial displacement and velocity, respectively, $\rho$ and $\kappa$ are some positive-valued coefficient functions and we prescribe homogeneous Dirichlet boundary conditions for simplicity. In a standard Galerkin discretization, we look for an approximation $u_h(.,t)$ of $u(.,t)$ in a finite dimensional subspace $V_h$ and test against all functions in $V_h$, which leads to solving the semi-discrete problem (see for instance \citep{hughes2012finite, quarteroni2009numerical})
\begin{align}
\label{eq: semi_discrete_pb}
\begin{split}
M\ddot{\mathbf{u}}(t) + K\mathbf{u}(t) &= \mathbf{f}(t) \qquad \text{for } t \in [0,T], \\
\mathbf{u}(0) &= \mathbf{u}_0,\\
\dot{\mathbf{u}}(0) &= \mathbf{v}_0.
\end{split}
\end{align}
where $K, M \in \mathbb{R}^{n \times n}$ are the stiffness and mass matrices, respectively. The time-dependent right-hand side vector $\mathbf{f}(t) \in \mathbb{R}^{n}$ accounts for the function $f$ and potential non-homogeneous Neumann and Dirichlet boundary conditions. Finally, $\mathbf{u}(t) \in \mathbb{R}^{n}$ is the coefficient vector of the approximate solution $u_h(\mathbf{x}, t)$ in a basis of $V_h$. Isogeometric analysis consists in choosing spline functions from computer-aided-design (CAD) such as B-splines both for representing the approximate solution and describing the geometry \citep{hughes2005isogeometric, cottrell2009isogeometric}. Such functions follow a standardized construction in a so-called parametric domain $\hat{\Omega}=(0,1)^d$ before being defined in the physical domain $\Omega$. In dimension $d=1$, the B-spline basis $\{\hat{B}_i\}_{i=1}^n$ is constructed recursively from a \emph{knot vector} $\Xi:=(\xi_1,\dots,\xi_{n+p+1})$, which is a sequence of non-decreasing real numbers. The integers $p$ and $n$ denote the spline degree and spline space dimension, respectively. A knot vector is called \emph{open} if
\begin{equation*}
    \xi_1=\dots=\xi_{p+1} < \xi_{p+2} \leq \dots \leq \xi_n < \xi_{n+1} = \dots = \xi_{n+p+1}.
\end{equation*}
Internal knots of multiplicity $1 \leq m \leq p$ give rise to $C^k$ continuous spline spaces, denoted $\mathcal{S}^{k}_{p,\Xi}$, where $k=p-m$. Our work primarily focuses on (but is not restricted to) maximally smooth $C^{p-1}$ spaces obtained when the multiplicity of each internal knot is $1$; i.e. the so-called isogeometric $k$-method. In dimension $d \geq 2$, the spline space is defined as a tensor product of univariate spaces, which all follow a similar construction. The degree, space dimension and continuity along each direction are collected in the vectors $\mathbf{p}=(p_1,p_2,\dots,p_d)$, $\mathbf{n}=(n_1,n_2,\dots,n_d)$ and $\mathbf{k}=(k_1,k_2,\dots,k_d)$, respectively, and we denote the resulting spline space $\mathcal{S}^{\mathbf{k}}_{\mathbf{p},\mathbf{\Xi}}$ (where the dependency on the knot vectors $\Xi_1,\dots,\Xi_d$ is specified by $\mathbf{\Xi}$). In dimension $d \geq 2$, it becomes convenient to label basis functions with multi-indices $\mathbf{i}=(i_1,i_2,\dots,i_d)$, which are often identified with ``linear'' indices in the global numbering. This identification permits a slight abuse of notation when writing 
\begin{equation*}
\hat{B}_i=\hat{B}_{\mathbf{i}}=\hat{B}_{1 i_1}\hat{B}_{2 i_2}\dots \hat{B}_{d i_d}
\end{equation*}
where $\hat{B}_{lj}$ denotes the $j$th function in the $l$th direction and $1 \leq i \leq n:=\prod_{l=1}^d n_l$ is a global index which only depends on $\mathbf{i}$ and $\mathbf{n}$. In the isogeometric paradigm, these functions also describe the geometry via the spline parametrization $F \colon \hat{\Omega} \to \Omega$, which maps the parametric domain to the physical domain. Geometries described by such a map are called \emph{single-patch}. The basis functions over the physical domain are then defined as $B_i = \hat{B}_i \circ F^{-1}$ and the spline spaces over the parametric and physical domains are
\begin{equation*}
    \hat{V}_h = \Span\{\hat{B}_{\mathbf{i}} \colon \mathbf{1} \leq \mathbf{i} \leq \mathbf{n}\} \quad \text{and} \quad V_h = \Span\{B_{\mathbf{i}} \colon \mathbf{1} \leq \mathbf{i} \leq \mathbf{n}\},
\end{equation*}
respectively, where $\mathbf{1}$ is the vector of all ones and vector inequalities are understood componentwise. For single-patch geometries, the entries of the stiffness and mass matrices are
\begin{equation}
\label{eq: matrix_entries}
     K_{ij}=\int_{\hat{\Omega}} (\nabla \hat{B}_i(\hat{\mathbf{x}}))^T G(\hat{\mathbf{x}}) \nabla \hat{B}_j(\hat{\mathbf{x}}) \quad \text{and} \quad M_{ij}=\int_{\hat{\Omega}} c(\hat{\mathbf{x}}) \hat{B}_i(\hat{\mathbf{x}})\hat{B}_j(\hat{\mathbf{x}}) \quad 1 \leq i,j \leq n
\end{equation}
where $G(\hat{\mathbf{x}}):=\kappa(F(\hat{\mathbf{x}}))|\det(J_F)|(J_F^TJ_F)^{-1}$, $c(\hat{\mathbf{x}}):=\rho(F(\hat{\mathbf{x}}))|\det(J_F)|$ and $J_F=J_F(\hat{\mathbf{x}})$ denotes the Jacobian matrix of $F$. As with any other standard Galerkin method, $K$ and $M$ are both symmetric and while $M$ is positive definite, $K$ is generally only positive semidefinite (unless Dirichlet boundary conditions are prescribed on some portion of the boundary). In isogeometric analysis, $M$ is additionally nonnegative owing to the pointwise nonnegativity of the B-spline basis functions.

NURBS functions enable the exact representation of a broader class of geometries (including conic sections) and lead to similar expressions and properties for the system matrices. However, the range of geometries they may describe is still far too limited for most industrial applications. For complex geometries, it may be necessary to divide the physical domain into $N_p$ subdomains (or patches); i.e.
\begin{equation*}
    \Omega = \bigcup_{r=1}^{N_p} \Omega_r
\end{equation*}
where each subdomain (or patch) $\Omega_r$ is described by its own map $F_r \colon \hat{\Omega} \to \Omega_r$. Thus, a \emph{multipatch} geometry is just a collection of patches. The construction of spline spaces over multipatch geometries is rather straightforward, though the notation becomes more cumbersome due to prolifying indices. Patches in isogeometric analysis are analogous to elements in classical finite element discretizations. Therefore, the assembly of the stiffness and mass matrices for isogeometric multipatch discretizations is analogously expressed as
\begin{equation*}
    K=\sum_{r=1}^{N_p} R_r^TK_rR_r \quad \text{and} \quad M=\sum_{r=1}^{N_p} R_r^TM_rR_r
\end{equation*}
where $K_r$ and $M_r$ are the local stiffness and mass matrices of the $r$th patch and $R_r$ maps its local degrees of freedom to global ones.

Despite the additional flexibility of multipatch geometries, they still fall short in describing the highly complex shapes of industrial CAD models \cite{leidinger2019explicit}. Such models commonly consist in multiple \emph{trimmed} NURBS patches. Trimming is a Boolean operation whereby parts of a geometry are joined, intersected or simply discarded. While these operations change the visualization of the model, they do not change its mathematical description. The analysis on trimmed geometries is particularly challenging for a variety of reasons, including integration on trimmed boundaries, imposition of essential boundary conditions, stability and conditioning issues \citep{marussig2018review, de2023stability}. Trimming also alters the structure of systems matrices, which heavily impacts the design of assembly algorithms and preconditioning techniques. As a matter of fact, many developments in isogeometric analysis that rely on a tensor product structure are not applicable to trimmed geometries.

For discretizing \eqref{eq: semi_discrete_pb} in time, explicit methods are usually preferred for fast dynamic processes such as blasts or impacts due to the physical restriction on the step size. Scores of methods have been proposed in the literature, including the central difference, Wilson-$\theta$ and generalized $\alpha$ methods to name just a few. Most of them are commonly included in textbooks \citep{hughes2012finite, bathe2006finite}, which the reader may consult for details. The critical time step depends on the method (see e.g. \eqref{eq: CFL_central_difference} for the central difference method in the undamped case) but in the undamped case all explicit methods require solving a linear system with the mass matrix at least once in each iteration. Although very efficient preconditioning strategies have been devised \citep{gao2014fast, wozniak2017parallel, chan2018multi, loli2022easy}, practitioners still mostly resort to mass lumping, which we describe in the next section.

\section{Mass lumping}
\label{se: mass_lumping}
\subsection{Row-sum mass lumping and its generalization}
\label{se: structured_mass_lumping}
As for preconditioning matrices, lumped mass matrices are easily invertible approximations of the consistent mass. However, contrary to preconditioning, mass lumping consists in directly substituting the consistent mass with its approximation and therefore, it should not only provide a sufficiently accurate approximation but also guarantee an improvement of the CFL condition. Several techniques have been developed but despite the apparent downgrade in convergence rate, the row-sum technique remains very popular owing to its simplicity and straightforward implementation. We define it through the application of a lumping operator \cite[][Definition 3.7]{voet2023mathematical}.

\begin{definition}[Lumping operator]
\label{def: lumping}
Let $B \in \mathbb{R}^{n \times n}$. The lumping operator $\mathcal{L} \colon \mathbb{R}^{n \times n} \to \mathbb{R}^{n \times n}$ is defined as
\begin{equation*}
\mathcal{L}(B)=\diag(d_1,\dots,d_n)
\end{equation*}
where $d_i=\sum_{j=1}^n |b_{ij}|$ for $i=1,\dots,n$.
\end{definition}

\begin{remark}
Note that the absolute values in the above definition are unnecessary for isogeometric analysis due to the positivity of the B-spline basis. However, in the case of a more general basis, they ensure that $\mathcal{L}(M) \succeq M$ and subsequently guarantee an improvement of the CFL \cite[][Corollary 3.10]{voet2023mathematical}. This result generally does not hold without them, as is well-known in the finite element community. Of course, the accuracy of lumping with absolute values when the basis is allowed to be negative is a whole separate issue and should be studied on a case-by-case basis.
\end{remark}

Since most mass lumping/scaling methods are defined algebraically as modifications to the consistent mass matrix, it is convenient to introduce an order relation between symmetric matrices.

\begin{definition}[Loewner partial order]
For two symmetric matrices $A,B \in \mathbb{R}^{n \times n}$, we write $A \succeq B$ (respectively $A \succ B$) if $A-B$ is positive semidefinite (respectively positive definite).
\end{definition}

The Loewner partial order is a natural choice in our context since it is the matrix equivalent of bounding bilinear forms. Indeed, if $M$ and $\tilde{M}$ are the Gram matrices for the symmetric bilinear forms $b,\tilde{b} \colon V_h \times V_h \to \mathbb{R}$ in some basis of $V_h$, then
\begin{equation*}
    \tilde{b}(u_h,u_h) \geq b(u_h,u_h) \quad \forall u_h \in V_h \iff \tilde{M} \succeq M.
\end{equation*}
When the construction of $\tilde{M}$ is completely algebraic, we rarely have an explicit representation of $\tilde{b}(u_h,u_h)$, but we might still bound it by understanding the relation between $\tilde{M}$ and $M$ in the Loewner ordering. The argument has already been used to good effect in \cite{voet2023mathematical} where it was shown that for two symmetric positive definite matrices $A,B \in \mathbb{R}^{n \times n}$, the generalized eigenvalues of $(A,\mathcal{L}(B))$ are always smaller or equal to those of $(A,B)$, which is a consequence of the fact that $\mathcal{L}(B) \succeq B$ \cite[][Corollary 3.10]{voet2023mathematical}. The authors in \cite{voet2023mathematical} used eigenvalue bounds to prove the result but there exist multiple short and elegant ways of reaching the same conclusion. For instance, if $B$ is nonnegative and one thinks of it as a weighted adjacency matrix, then $\mathcal{L}(B)$ is the degree matrix and $\mathcal{L}(B) - B$ is its graph Laplacian, which plays a prominent role in spectral clustering techniques \cite{von2007tutorial}. A direct computation then shows that $\mathbf{x}^T(\mathcal{L}(B) - B)\mathbf{x}=\frac{1}{2} \sum_{i,j=1}^n (x_i-x_j)^2 b_{ij} \geq 0$ for any $\mathbf{x} \in \mathbb{R}^n$. This technique will be employed later in our paper.

Unfortunately, in the context of isogeometric analysis, the row-sum technique performs poorly in higher dimensions, even for moderate spline degrees. Therefore, in \cite{voet2023mathematical}, the authors considered increasing the bandwidth in order to improve the accuracy, while still underestimating the generalized eigenvalues of $(A,B)$. It led to defining a finite partially ordered sequence of banded matrices with increasing bandwidth
\begin{equation*}
    \mathcal{L}(B)=P_1 \succeq P_2 \succeq \dots \succeq P_{n-1} \succeq P_n=B
\end{equation*}
where $P_1$ coincides with the usual row-sum lumped mass matrix, $P_2$ is tridiagonal, $P_3$ is pentadiagonal and so forth. This strategy was then generalized to higher dimensions for Kronecker product matrices by lumping the factor matrices defining the Kronecker product.

\subsection{Block mass lumping}
In general, for nontrivial geometries and coefficients, the mass matrix cannot be expressed as a Kronecker product and the techniques described in \citep{deng2021boundary, manni2022application, hiemstra2021removal} are not straightforwardly applicable. In \cite{voet2023mathematical}, the authors suggested computing the nearest Kronecker product approximation and substituting it to the consistent mass. However, the loss of accuracy induced by the approximation depends on the singular value decay of a reordered matrix and is independent of the discretization parameters. Not only may the approximation be rather crude but combining it with mass lumping also does not give a theoretical guarantee of improving the CFL, although it was observed in all practically relevant cases. The primary objective of this section is to generalize the mass lumping techniques in \cite{voet2023mathematical} to complex geometries and address the aforementioned issues. These techniques guarantee an improvement of the CFL condition and their implementation is non-intrusive. Although they do not improve the convergence rate, they might still significantly improve the accuracy and remain relevant, especially at a time when most practical simulations employ quadratic or cubic discretizations. The key observation for achieving these results is that, although the mass matrix generally cannot be expressed as a Kronecker product, it still inherits some favorable structure from the tensor product basis functions which we will exploit for designing algebraic mass lumping techniques. We will first describe this structure in detail for the single-patch case and then propose a block generalization of the methods presented in \cite{voet2023mathematical}. From there, the multipatch case naturally follows.

Thanks to the local support property of the basis functions commonly used in isogeometric analysis (e.g. B-splines and NURBS), maximally smooth discretizations of 1D problems lead to banded matrices, where the bandwidth is the spline degree $p$ \cite{hofreither2018black}. For higher dimensional problems, the tensor product construction of the basis functions leads to hierarchical banded matrices that were defined inductively in \cite{hofreither2018black}. Without loss of generality, the framework introduced in this section assumes a lexicographical type labeling of the degrees of freedom, which may always be recovered after a suitable reordering of the system matrices.

\begin{definition}[$1$-level banded matrix]
\label{def: 1_level_banded}    
A matrix $B \in \mathbb{R}^{n \times n}$ is called \emph{1-level banded} (or simply \emph{banded}) with bandwidth $b$ if
\begin{equation*}
    |i-j|>b \implies b_{ij}=0 \quad i,j=1,\dots,n.
\end{equation*}
\end{definition}

\begin{definition}[$d$-level banded matrix]
\label{def: d_level_banded}    
A block matrix $\mathcal{B} \in \mathbb{R}^{n_1n_2\dots n_d \times n_1n_2\dots n_d}$ partitioned as
\begin{equation*}
\mathcal{B}
=
\begin{pmatrix}
    B_{1,1} & \cdots & B_{1,n_1} \\
    \vdots & \ddots & \vdots \\
    B_{n_1,1} & \cdots & B_{n_1,n_1}
\end{pmatrix}
\end{equation*}
is called \emph{d-level banded} with bandwidths $(b_1,b_2,\dots,b_d)$ if each block $B_{i,j} \in \mathbb{R}^{n_2\dots n_d \times n_2\dots n_d}$ is $(d-1)$-level banded with bandwidths $(b_2,\dots,b_d)$ and
\begin{equation*}
    |i-j|>b_1 \implies B_{i,j}=0 \quad i,j=1,\dots,n_1.
\end{equation*}
\end{definition}

This hierarchical notion of bandedness is related to the standard notion of bandedness through the bandwidths and block sizes. In the rest of the paper we will denote $r_k=\prod_{j=k+1}^d n_j$ the size of the blocks on the $k$th hierarchical level (with $k<d$). On the finest level, the ``blocks'' reduce to scalars and we set $r_d=1$.

\begin{lemma}
\label{lem: bandwidth}
Let $\mathcal{B} \in \mathbb{R}^{n_1n_2\dots n_d \times n_1n_2\dots n_d}$ be $d$-level banded with bandwidths $\mathbf{b}=(b_1,b_2,\dots,b_d)$ and block sizes $\mathbf{r}=(r_1,r_2,\dots,r_d)$. Then $\mathcal{B}$ has bandwidth
\begin{equation}
\label{eq: bandedness}
    \mathbf{b} \cdot \mathbf{r} = \sum_{i=1}^d b_ir_i.
\end{equation}
\end{lemma}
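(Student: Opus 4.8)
The plan is to prove the statement by induction on the number of levels $d$, exploiting directly the recursive structure in Definition \ref{def: d_level_banded}. The base case $d=1$ is immediate: a $1$-level banded matrix has bandwidth $b_1$ by Definition \ref{def: 1_level_banded}, and since the empty product gives $r_1 = \prod_{j=2}^{1} n_j = 1$, we indeed have $\mathbf{b}\cdot\mathbf{r} = b_1 r_1 = b_1$.

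For the inductive step, I would assume the result for all $(d-1)$-level banded matrices and let $\mathcal{B}$ be $d$-level banded with bandwidths $(b_1,\dots,b_d)$, partitioned into $n_1 \times n_1$ blocks $B_{i,j}$ of size $r_1 \times r_1$. By definition each $B_{i,j}$ is $(d-1)$-level banded with bandwidths $(b_2,\dots,b_d)$; crucially, its associated block sizes are exactly $(r_2,\dots,r_d)$, since $r_k = \prod_{j=k+1}^d n_j$ already has the right form for $k \geq 2$. The induction hypothesis then gives that every block $B_{i,j}$ has (ordinary) bandwidth at most $\beta := \sum_{k=2}^d b_k r_k$.

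It then remains to translate block positions into global indices. I would write a global row index as $I = (i-1)r_1 + s$ and a global column index as $J = (j-1)r_1 + t$, with $1 \leq i,j \leq n_1$ and $1 \leq s,t \leq r_1$, so that $\mathcal{B}_{I,J}$ is precisely the $(s,t)$ entry of $B_{i,j}$. Assuming $\mathcal{B}_{I,J} \neq 0$, the block $B_{i,j}$ must be nonzero, which forces $|i-j| \leq b_1$ by the top-level band condition, and its $(s,t)$ entry must be nonzero, which forces $|s-t| \leq \beta$ by the induction hypothesis. The triangle inequality then yields
\[
|I - J| = \bigl|(i-j)r_1 + (s-t)\bigr| \leq |i-j|\,r_1 + |s-t| \leq b_1 r_1 + \beta = \sum_{k=1}^d b_k r_k.
\]
Taking the contrapositive shows that $|I-J| > \sum_{k=1}^d b_k r_k$ implies $\mathcal{B}_{I,J} = 0$, which is exactly the asserted bandwidth bound.

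I expect no serious obstacle: the argument is essentially the triangle inequality applied once per level. The only point requiring genuine care is the bookkeeping of the mixed-radix correspondence between the hierarchical block indices and the global lexicographic index, in particular verifying that each sub-block inherits precisely the block sizes $(r_2,\dots,r_d)$ prescribed by the definition, so that the induction hypothesis applies verbatim. Once the decomposition $I = (i-1)r_1 + s$ is fixed, combining the top-level band constraint with the inherited bandwidth is routine.
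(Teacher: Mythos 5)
Your proof is correct and follows essentially the same route as the paper's: induction on $d$, with the bandwidth of $\mathcal{B}$ obtained as $b_1 r_1$ plus the (inductively bounded) bandwidth of the blocks $B_{i,j}$. The only difference is that you make explicit, via the decomposition $I=(i-1)r_1+s$ and the triangle inequality, the index bookkeeping that the paper compresses into the single assertion that the bandwidth of $\mathcal{B}$ is the sum of $b_1r_1$ and the block bandwidth.
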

\begin{proof}
The proof is by induction on the dimension $d$. For $d=1$, $\mathcal{B}$ is $1$-level banded (i.e. banded) with bandwidth $b_d$ and the property \eqref{eq: bandedness} holds since $r_d=1$. We now verify the property for dimension $d$ assuming it holds for dimension $d-1$. Let $\mathcal{B}$ be $d$-level banded with bandwidths $\mathbf{b}=(b_1,b_2,\dots,b_d)$. Then, by definition,
\begin{equation*}
\mathcal{B}
=
\begin{pmatrix}
    B_{1,1} & \cdots & B_{1,n_1} \\
    \vdots & \ddots & \vdots \\
    B_{n_1,1} & \cdots & B_{n_1,n_1}
\end{pmatrix}
\end{equation*}
and each block $B_{i,j}$ is $(d-1)$-level banded with bandwidths $(b_2,\dots,b_d)$ and $B_{i,j}=0$ if $|i-j|>b_1$. Thus, the bandwidth of $\mathcal{B}$ is given by the sum of $b_1r_1$ and the bandwidth of $B_{i,j}$. Since $B_{i,j}$ is $(d-1)$-level banded, the induction hypothesis completes the proof.
\end{proof}

Definitions \ref{def: 1_level_banded} and \ref{def: d_level_banded} only provide a description of the sparsity. In order to build a working framework, they must be complemented with Definitions \ref{def: 1_SPSD_SPD_partition} and \ref{def: d_SPSD_SPD_partition} that describe both symmetry and spectral properties. 

\begin{definition}
\label{def: 1_SPSD_SPD_partition}
The sets of symmetric positive semidefinite (SPSD) and symmetric positive definite (SPD) matrices of size $n$ are defined, respectively, as
\begin{equation*}
    \mathcal{S}_n=\{B \in \mathbb{R}^{n \times n} \colon B=B^T, B \succeq 0\} \quad \text{and} \quad \mathcal{S}_n^+=\{B \in \mathbb{R}^{n \times n} \colon B=B^T, B \succ 0\}.
\end{equation*}
\end{definition}
The next definition provides a hierarchical generalization.
\begin{definition}
\label{def: d_SPSD_SPD_partition}
For block matrices $\mathcal{B} \in \mathbb{R}^{n_1n_2\dots n_d \times n_1n_2\dots n_d}$ partitioned as
\begin{equation*}
\mathcal{B}
=
\begin{pmatrix}
    B_{1,1} & \cdots & B_{1,n_1} \\
    \vdots & \ddots & \vdots \\
    B_{n_1,1} & \cdots & B_{n_1,n_1}
\end{pmatrix}
\end{equation*}
where $B_{i,j} \in \mathbb{R}^{n_2\dots n_d \times n_2\dots n_d}$, we define the sets
\begin{align*}
    &\mathcal{S}_{(n_1,n_2,\dots,n_d)}=\{\mathcal{B} \in \mathcal{S}_{n} \colon B_{i,j} \in \mathcal{S}_{(n_2,\dots,n_d)}\}, \\
    &\mathcal{S}_{(n_1,n_2,\dots,n_d)}^+=\{\mathcal{B} \in \mathcal{S}_{n}^+ \colon B_{i,j} \in \mathcal{S}_{(n_2,\dots,n_d)}\},
\end{align*}
where $n=\prod_{i=1}^d n_i$.
\end{definition}
Given a vector $\mathbf{n}=(n_1,n_2,\dots,n_d)$, we will often denote the corresponding sets $\mathcal{S}_\mathbf{n}$ and $\mathcal{S}_\mathbf{n}^+$, respectively. Clearly, if $\mathcal{B} \in \mathcal{S}_\mathbf{n}$ (or $\mathcal{S}_\mathbf{n}^+$) is $d$-level banded with bandwidths $\mathbf{b}=(b_1,b_2,\dots,b_d)$, then $\mathbf{b} \leq \mathbf{n}-\mathbf{1}$ componentwise, where $\mathbf{1}$ is the vector of all ones.
\begin{lemma}
For any vector $\mathbf{n}=(n_1,n_2,\dots,n_d) \in \mathbb{N}^d$,
\begin{align*}
    &\mathcal{S}_{(n_1,n_2,\dots,n_d)} = \mathcal{S}_{(n_1,n_2,\dots,n_{d-1},r_{d-1})} 
 \subseteq \mathcal{S}_{(n_1,n_2,\dots,n_{d-2},r_{d-2})} \subseteq \dots \subseteq \mathcal{S}_{(n_1, r_1)} \subseteq \mathcal{S}_{n}, \\
    &\mathcal{S}_{(n_1,n_2,\dots,n_d)}^+ = \mathcal{S}_{(n_1,n_2,\dots,n_{d-1},r_{d-1})}^+
 \subseteq \mathcal{S}_{(n_1,n_2,\dots,n_{d-2},r_{d-2})}^+ \subseteq \dots \subseteq \mathcal{S}_{(n_1, r_1)}^+ \subseteq \mathcal{S}_{n}^+.
\end{align*}
\end{lemma}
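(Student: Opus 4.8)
The plan is to collapse the whole statement to one elementary ``index-merging'' inclusion and then iterate it. First I would dispose of the leading equality: since $r_{d-1} = \prod_{j=d}^d n_j = n_d$, the vectors $(n_1,\dots,n_d)$ and $(n_1,\dots,n_{d-1},r_{d-1})$ are literally identical, so $\mathcal{S}_{(n_1,\dots,n_d)} = \mathcal{S}_{(n_1,\dots,n_{d-1},r_{d-1})}$ needs no argument (and likewise for $\mathcal{S}^+$). Next I would record the arithmetic identity $r_{k-1} = \prod_{j=k}^d n_j = n_k\prod_{j=k+1}^d n_j = n_k r_k$. This shows that passing from $\mathcal{S}_{(n_1,\dots,n_k,r_k)}$ to $\mathcal{S}_{(n_1,\dots,n_{k-1},r_{k-1})}$ amounts to replacing the last two block sizes $n_k,r_k$ by their product $n_k r_k = r_{k-1}$; in particular all sets in the chain consist of matrices of the same order $n=\prod_{j=1}^d n_j$, so the inclusions are meaningful, and every one of them is an instance of a single ``merge the last two indices'' inclusion.

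The core step is therefore to prove, for every length $\ell\ge 2$ and positive integers $m_1,\dots,m_\ell$,
\[
\mathcal{S}_{(m_1,\dots,m_{\ell-1},m_\ell)} \subseteq \mathcal{S}_{(m_1,\dots,m_{\ell-2},\,m_{\ell-1}m_\ell)},
\]
which I would establish by induction on $\ell$. The base case $\ell=2$ is immediate from Definition \ref{def: d_SPSD_SPD_partition}: an element of $\mathcal{S}_{(m_1,m_2)}$ is by definition a member of $\mathcal{S}_{m_1m_2}$ subject to an extra block condition, hence lies in $\mathcal{S}_{m_1m_2}=\mathcal{S}_{(m_1m_2)}$. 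For the inductive step, let $\mathcal{B}\in\mathcal{S}_{(m_1,\dots,m_\ell)}$; the definition gives $\mathcal{B}\in\mathcal{S}_{\prod_i m_i}$ together with the block membership $B_{i,j}\in\mathcal{S}_{(m_2,\dots,m_\ell)}$. Applying the induction hypothesis to the length-$(\ell-1)$ vector $(m_2,\dots,m_\ell)$ upgrades this to $B_{i,j}\in\mathcal{S}_{(m_2,\dots,m_{\ell-2},\,m_{\ell-1}m_\ell)}$, which is precisely the block condition defining $\mathcal{S}_{(m_1,\dots,m_{\ell-2},\,m_{\ell-1}m_\ell)}$. Iterating this merge inclusion $d-1$ times, starting from $(n_1,\dots,n_d)$ and using $n_k r_k = r_{k-1}$ at each stage, reproduces exactly the claimed nested sequence down to $\mathcal{S}_{(n_1,r_1)}\subseteq\mathcal{S}_{(n_1 r_1)}=\mathcal{S}_n$.

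The positive definite chain follows by the same scheme with a small simplification. In Definition \ref{def: d_SPSD_SPD_partition} the blocks of a matrix in $\mathcal{S}^+_{(\cdot)}$ are required only to lie in the \emph{semidefinite} hierarchical set $\mathcal{S}_{(\cdot)}$; hence for $\mathcal{B}\in\mathcal{S}^+_{(m_1,\dots,m_\ell)}$ the sole constraints are $\mathcal{B}\in\mathcal{S}^+_{\prod_i m_i}$, which is untouched by merging, and $B_{i,j}\in\mathcal{S}_{(m_2,\dots,m_\ell)}$, which is handled verbatim by the semidefinite merge inclusion already proven. No separate induction is needed for the $\mathcal{S}^+$ chain.

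I do not expect a genuine obstacle: each inclusion is essentially tautological, expressing that imposing an additional block-partitioning constraint on top of (semi)definiteness yields a subset. The only point demanding care is the bookkeeping — verifying $r_{k-1}=n_k r_k$, checking that all sets in the chain have the common order $n$, and organizing the recursion as an induction on the \emph{length} of the size-vector rather than on the spatial dimension $d$ — so that the reduction to a single merge inclusion is airtight.
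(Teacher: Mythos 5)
Your proof is correct and follows essentially the same route as the paper's: both arguments rest on the elementary inclusion $\mathcal{S}_{(a,b)} \subseteq \mathcal{S}_{ab}$ (a two-level hierarchical set sits inside the plain SPSD set of the merged size) iterated through the hierarchy, with the leading equality being the triviality $r_{d-1}=n_d$. The only difference is presentational: you package the iteration as a formal induction on the length of the size vector and observe explicitly that the $\mathcal{S}^+$ chain needs no separate induction (since its blocks are only required to lie in the semidefinite sets), whereas the paper sweeps level-by-level up the hierarchical structure of a fixed matrix and dismisses the $\mathcal{S}^+$ case as ``completely analogous.''
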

\begin{proof}
We prove the statements from left to right. Let $\mathcal{B} \in \mathcal{S}_{(n_1,n_2,\dots,n_d)}$. By definition, on the $(d-1)$th hierarchical level the matrices are in $\mathcal{S}_{n_d}=\mathcal{S}_{r_{d-1}}$ and the first equality trivially follows. On the $(d-2)$th level the matrices are in $\mathcal{S}_{(n_{d-1}, r_{d-1})} \subseteq \mathcal{S}_{r_{d-2}}$. Thus, $\mathcal{B} \in \mathcal{S}_{(n_1,n_2,\dots,n_{d-2},r_{d-2})}$. We then repeatedly apply the same argument by moving up the hierarchical structure and realizing that at level $k$ (with $0 \leq k \leq d-2$) the matrices are in $\mathcal{S}_{(n_{k+1}, r_{k+1})} \subseteq \mathcal{S}_{r_k}$. The proof of the second statement is completely analogous.
\end{proof}

Thus, $d$-level banded matrices in $\mathcal{S}_{\mathbf{n}}$ (or $\mathcal{S}_{\mathbf{n}}^+$) have a hierarchical block banded structure with SPSD blocks. The following lemma shows that the isogeometric mass matrix falls in this category.

\begin{lemma}
\label{lem: mass_properties}
Let $\mathcal{M} \in \mathbb{R}^{n_1 \dots n_d \times n_1 \dots n_d}$ be a $d$-dimensional isogeometric single-patch mass matrix with associated dimensions vector $\mathbf{n}=(n_1,n_2,\dots,n_d)$. Then 
\begin{enumerate}[noitemsep]
    \item $\mathcal{M}$ is $d$-level banded,
    \item $\mathcal{M} \in \mathcal{S}_{\mathbf{n}}^+$.
\end{enumerate}
\end{lemma}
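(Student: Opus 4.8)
The plan is to argue by induction on the dimension $d$, but first to strengthen the statement so that the induction closes. Instead of the specific weight $c(\hat{\mathbf{x}})=\rho(F(\hat{\mathbf{x}}))|\det(J_F)|$, I consider a \emph{weighted} mass matrix $\mathcal{M}^w$ with entries $\mathcal{M}^w_{\mathbf{i}\mathbf{j}}=\int_{\hat{\Omega}} w(\hat{\mathbf{x}})\hat{B}_{\mathbf{i}}(\hat{\mathbf{x}})\hat{B}_{\mathbf{j}}(\hat{\mathbf{x}})\,d\hat{\mathbf{x}}$ for an arbitrary weight $w\geq 0$ on $\hat{\Omega}$, and I claim that $\mathcal{M}^w$ is $d$-level banded and lies in $\mathcal{S}_{\mathbf{n}}$, with $\mathcal{M}^w\in\mathcal{S}_{\mathbf{n}}^+$ whenever $w>0$ almost everywhere. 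This strengthening is what makes the recursion work: the blocks appearing at the first hierarchical level are themselves lower-dimensional weighted mass matrices, but with weights that may vanish on part of the domain, so only the semidefinite version of the claim survives the descent. The original lemma is recovered by taking $w=c$, which is strictly positive since $\rho>0$ and $F$ is a regular parametrization, so $|\det(J_F)|>0$.

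For the bandedness (item 1), the base case $d=1$ is the classical local-support property of univariate B-splines: $\hat{B}_{1i}$ and $\hat{B}_{1j}$ have overlapping supports only when $|i-j|\leq p_1$, so $\mathcal{M}^w$ is banded with bandwidth $b_1=p_1$. For the inductive step I fix the first indices $i_1=i$, $j_1=j$ and read off the block $B_{i,j}$ indexed by the remaining multi-indices. Since $\hat{B}_{1i}\hat{B}_{1j}\equiv 0$ whenever $|i-j|>p_1$, such blocks vanish, giving the top-level bandwidth $b_1=p_1$. For the surviving blocks, Fubini's theorem lets me integrate out $\hat{x}_1$ first, so that
\[
(B_{i,j})_{(i_2,\dots,i_d),(j_2,\dots,j_d)}=\int_{(0,1)^{d-1}}\tilde{w}_{ij}(\hat{x}_2,\dots,\hat{x}_d)\prod_{l=2}^d \hat{B}_{li_l}\hat{B}_{lj_l}\,d\hat{x}_2\cdots d\hat{x}_d
\]
with reduced weight $\tilde{w}_{ij}:=\int_0^1 w\,\hat{B}_{1i}\hat{B}_{1j}\,d\hat{x}_1$. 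Thus $B_{i,j}$ is exactly a $(d-1)$-dimensional weighted mass matrix with weight $\tilde{w}_{ij}$, and the induction hypothesis makes it $(d-1)$-level banded with bandwidths $(p_2,\dots,p_d)$, which is precisely Definition \ref{def: d_level_banded}.

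For the spectral properties (item 2), symmetry of $\mathcal{M}^w$ is immediate from the symmetry of the integrand in $\mathbf{i}\leftrightarrow\mathbf{j}$. Positive definiteness when $w>0$ follows from the identity $\mathbf{v}^T\mathcal{M}^w\mathbf{v}=\int_{\hat{\Omega}} w\,(\sum_{\mathbf{i}} v_{\mathbf{i}}\hat{B}_{\mathbf{i}})^2\,d\hat{\mathbf{x}}$, which is strictly positive for $\mathbf{v}\neq\mathbf{0}$ because the B-splines $\{\hat{B}_{\mathbf{i}}\}$ are linearly independent (so the bracketed spline is not identically zero) and $w>0$; for general $w\geq 0$ the same identity gives $\mathcal{M}^w\succeq 0$. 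It remains to verify the block requirement in Definition \ref{def: d_SPSD_SPD_partition}, namely that every block $B_{i,j}$ lies in $\mathcal{S}_{(n_2,\dots,n_d)}$. But each $B_{i,j}$ is the $(d-1)$-dimensional weighted mass matrix identified above, with weight $\tilde{w}_{ij}$, and crucially $\tilde{w}_{ij}\geq 0$ because $w\geq 0$ and the B-splines $\hat{B}_{1i},\hat{B}_{1j}$ are \emph{pointwise nonnegative}. The semidefinite induction hypothesis therefore gives $B_{i,j}\in\mathcal{S}_{(n_2,\dots,n_d)}$; combined with the global positive definiteness $\mathcal{M}^w\in\mathcal{S}_n^+$, this is exactly the statement $\mathcal{M}^w\in\mathcal{S}_{\mathbf{n}}^+$ (and $\mathcal{M}^w\in\mathcal{S}_{\mathbf{n}}$ when $w\geq 0$).

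The main obstacle, and the only place where we genuinely use that we are in isogeometric analysis rather than working with an arbitrary basis, is ensuring that the \emph{off-diagonal} blocks $B_{i,j}$ with $i\neq j$ are positive semidefinite. This hinges entirely on the nonnegativity of the B-spline basis, which keeps the reduced weight $\tilde{w}_{ij}$ nonnegative; for a sign-changing basis the product $\hat{B}_{1i}\hat{B}_{1j}$ could change sign and the blocks would generally leave $\mathcal{S}_{(n_2,\dots,n_d)}$. Getting the bookkeeping of the strengthened hypothesis right, namely carrying a nonnegative weight through the descent while only claiming definiteness at the top level where $w>0$, is thus the crux; everything else reduces to the local-support property and a one-line quadratic-form computation.
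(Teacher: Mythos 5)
Your proposal is correct and takes essentially the same route as the paper: both arguments hinge on the pointwise nonnegativity of the B-spline basis keeping the induced weights nonnegative, and on the quadratic-form identity $\mathbf{x}^T B_{i,j}\mathbf{x}=\int \tilde{w}_{ij}\,v^2 \geq 0$ to certify that every block is SPSD, with your explicit induction over weighted mass matrices being a formalized version of the paper's informal top-down recursion (the paper's weight $g_{ij}=c\,\hat{B}_{1i}\hat{B}_{1j}\geq 0$ is exactly your reduced weight before integrating out $\hat{x}_1$). The only cosmetic difference is in part 1, where the paper deduces bandedness by matching the sparsity pattern of $\mathcal{M}$ to that of the Kronecker-product parametric mass matrix, while you re-derive it by the same Fubini induction; both rest on the identical local-support property.
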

\begin{proof}
We prove the two statements below. 
\begin{enumerate}[noitemsep]
    \item The key observation is noticing that the mass matrices $\mathcal{M}$ and $\hat{\mathcal{M}}$ in the physical and parametric domains, respectively, have the same sparsity pattern and therefore the same hierarchical block bandedness. Indeed, their entries are defined as (see \eqref{eq: matrix_entries})
    \begin{equation*}
        \mathcal{M}_{ij}=\int_{\hat{\Omega}} c(\hat{\mathbf{x}}) \hat{B}_i(\hat{\mathbf{x}})\hat{B}_j(\hat{\mathbf{x}}) \quad \text{and} \quad \hat{\mathcal{M}}_{ij}=\int_{\hat{\Omega}} \hat{B}_i(\hat{\mathbf{x}})\hat{B}_j(\hat{\mathbf{x}}).
    \end{equation*}
    From the positivity of the B-spline (or NURBS) basis and the fact that $c(\hat{\mathbf{x}}):=\rho(F(\hat{\mathbf{x}}))|\det(J_F(\hat{\mathbf{x}}))| > 0$, it follows that $\mathcal{M}_{ij}=0 \iff \hat{\mathcal{M}}_{ij}=0$. Thus, $\mathcal{M}$ and $\hat{\mathcal{M}}$ have the same sparsity pattern. Moreover, since $\hat{\mathcal{M}}$ is the mass matrix in the parametric domain
    \begin{equation*}
        \hat{\mathcal{M}} = \bigotimes_{i=1}^d \hat{M}_i
    \end{equation*}
    where $\hat{M}_i \in \mathbb{R}^{n_i \times n_i}$ is banded with bandwidth $b_i$ for $i=1,\dots,d$. Thus, by definition, $\hat{\mathcal{M}}$ is $d$-level banded with bandwidths $(b_1,b_2,\dots,b_d)$ and consequently so is $\mathcal{M}$.
    \item We start at the top of the hierarchical structure and work our way downward. Firstly, since the mass matrix is symmetric positive definite $\mathcal{M} \in \mathcal{S}_{n}^+$. Secondly, it may be written as 
    \begin{equation}
    \label{eq: block_structure}
    \mathcal{M}
    =
    \begin{pmatrix}
        M_{1,1} & \cdots & M_{1,n_1} \\
        \vdots & \ddots & \vdots \\
        M_{n_1,1} & \cdots & M_{n_1,n_1}
    \end{pmatrix}
    \end{equation}
    where $M_{i,j} \in \mathbb{R}^{r_1 \times r_1}$. We will show that $M_{i,j} \in \mathcal{S}_{r_1}$ for all $i,j=1,\dots n_1$. Let $\hat{\mathbf{B}}_l \in \mathbb{R}^{n_l}$ denote the vector of basis functions $\{\hat{B}_{li}\}_{i=1}^{n_l}$ along the $l$th direction in the parametric domain. The matrix $M_{i,j}$ is then given by
    \begin{equation}
    \label{eq: M_{ij}_symmetry}
        M_{i,j}=\int_{\hat{\Omega}} \underbrace{c(\hat{\mathbf{x}}) \hat{B}_{1i}\hat{B}_{1j}}_{g_{ij}} \bigotimes_{l=2}^d \hat{\mathbf{B}}_l\hat{\mathbf{B}}_l^T = \int_{\hat{\Omega}} g_{ij} \bigotimes_{l=2}^d \hat{\mathbf{B}}_l\hat{\mathbf{B}}_l^T.
    \end{equation}
    From \eqref{eq: M_{ij}_symmetry}, $M_{i,j}$ is evidently symmetric. Moreover, thanks to the pointwise nonnegativity of the basis functions, $g_{ij} \geq 0$ and consequently, for any vector $\mathbf{x} \in \mathbb{R}^{r_1}$,
    \begin{equation}
    \label{eq: M_{ij}_positive_semidefiniteness}
        \mathbf{x}^TM_{i,j}\mathbf{x}=\int_{\hat{\Omega}} g_{ij} (\mathbf{x}^T\bigotimes_{l=2}^d \hat{\mathbf{B}}_l)^2 = \int_{\hat{\Omega}} g_{ij} v^2 \geq 0
    \end{equation}
    where $v=\mathbf{x}^T\bigotimes_{l=2}^d \hat{\mathbf{B}}_l$ is a function in a finite element subspace. Thus, \eqref{eq: M_{ij}_symmetry} and \eqref{eq: M_{ij}_positive_semidefiniteness} together show that $M_{i,j} \in \mathcal{S}_{r_1}$. Finally, since $\mathcal{M} \in \mathcal{S}_{n}^+$ and $M_{i,j} \in \mathcal{S}_{r_1}$ for all $i,j=1,\dots n_1$, then $\mathcal{M} \in \mathcal{S}_{(n_1,r_1)}^+$ by definition. We now repeat the same argument by first showing that each $M_{i,j}$ can itself be expressed as a block matrix similarly to \eqref{eq: block_structure} with blocks of size $r_2 \times r_2$. By repeating the arguments in \eqref{eq: M_{ij}_symmetry} and \eqref{eq: M_{ij}_positive_semidefiniteness} one easily shows that each of these blocks is in $\mathcal{S}_{r_2}$ and consequently $M_{i,j} \in \mathcal{S}_{(n_2,r_2)}$. Finally, moving up one level, we deduce that $\mathcal{M} \in \mathcal{S}_{(n_1,n_2,r_2)}^+$. By recursively applying the same arguments, we finally prove that $\mathcal{M} \in \mathcal{S}_{\mathbf{n}}^+$.
\end{enumerate}
\end{proof}

\begin{remark}
For single-patch isogeometric discretizations, the dimensions vector $\mathbf{n}$ corresponds to the number of basis functions along each parametric direction and is always uniquely determined by the number of subdivisions, order, smoothness and boundary conditions. Moreover, for maximally smooth discretizations, the bandwidths are equal to the spline degrees such that $\mathbf{b}=\mathbf{p}$.   
\end{remark}

The definitions above also accommodate vector-valued PDEs such as linear elasticity. In this context, the mass matrix is a $(d+1)$-level banded matrix of bandwidths $(0,b_1,\dots,b_d)$ (i.e. a block diagonal matrix). Moreover, if each component of the solution is discretized using the same scalar spaces, then $\mathcal{M} \in \mathcal{S}_{(d, \mathbf{n})}^+$, where $\mathbf{n}$ is the dimensions vector for a scalar problem.

We stress that Lemma \ref{lem: mass_properties} is a sole consequence of the tensor product construction of the basis functions and their pointwise nonnegativity and does not depend on the geometry mapping. In particular, it shows that the isogeometric mass matrix is \emph{not only} symmetric positive definite, but actually enjoys additional structure. For instance $\mathcal{S}_{(n_1,n_2)}^+$, typically encountered for $2$-dimensional discretizations, is the set of SPD block matrices with SPSD blocks. This structure is key to extending mass lumping techniques to nontrivial problems in dimension $d \geq 2$. We now define the block analogue of the lumping operator introduced in \cite{voet2023mathematical}.

\begin{definition}[Block lumping operator]
\label{def: block_lumping}
Let $\mathcal{B} \in \mathbb{R}^{n_1n_2 \times n_1n_2}$ be a block matrix partitioned as
\begin{equation*}
\mathcal{B}
=
\begin{pmatrix}
    B_{1,1} & \cdots & B_{1,n_1} \\
    \vdots & \ddots & \vdots \\
    B_{n_1,1} & \cdots & B_{n_1,n_1}
\end{pmatrix}
\end{equation*}
where $B_{i,j} \in \mathbb{R}^{n_2 \times n_2}$. The block lumping operator $\mathcal{L}$ is defined as
\begin{equation*}
\mathcal{L}(\mathcal{B})=\diag(D_1,\dots,D_{n_1}):=
    \begin{pmatrix}
        D_1 & \cdots & 0 \\
        \vdots & \ddots & \vdots \\
        0 & \cdots & D_{n_1}
    \end{pmatrix}
\end{equation*}
where $D_i=\sum_{j=1}^{n_1} B_{i,j}$ for $i=1,\dots,n_1$.
\end{definition}

Whereas the (scalar) lumping operator in Definition \ref{def: lumping} returns a diagonal matrix, the block lumping operator returns a block diagonal matrix. We establish some useful consequences of this definition for the sets $\mathcal{S}_{\mathbf{n}}$ and $\mathcal{S}_{\mathbf{n}}^+$.

\begin{lemma}
\label{lem: well_posedness}
For any vector $\mathbf{n}=(n_1,n_2,\dots,n_d) \in \mathbb{N}^d$,
\begin{equation*}
    \mathcal{L}(\mathcal{S}_{\mathbf{n}}) \subseteq \mathcal{S}_{\mathbf{n}} \quad \text{and} \quad \mathcal{L}(\mathcal{S}_{\mathbf{n}}^+) \subseteq \mathcal{S}_{\mathbf{n}}^+.
\end{equation*}
\end{lemma}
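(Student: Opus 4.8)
The plan is to apply the block lumping operator at the top hierarchical level and exploit the two defining features of $\mathcal{S}_{\mathbf{n}}$ and $\mathcal{S}_{\mathbf{n}}^+$: the overall positive (semi)definiteness of $\mathcal{B}$, and the fact that \emph{every} block $B_{i,j}$ (off-diagonal ones included) already lies in the SPSD class $\mathcal{S}_{(n_2,\dots,n_d)}$. First I would record the structure of the output: writing $\mathcal{B}$ as an $n_1 \times n_1$ block matrix with blocks $B_{i,j}$ of size $r_1 \times r_1$, Definition \ref{def: block_lumping} gives the block-diagonal matrix $\mathcal{L}(\mathcal{B}) = \diag(D_1,\dots,D_{n_1})$ with $D_i = \sum_{j=1}^{n_1} B_{i,j}$. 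Since the off-diagonal top-level blocks of $\mathcal{L}(\mathcal{B})$ vanish and the zero matrix lies trivially in $\mathcal{S}_{(n_2,\dots,n_d)}$, the whole argument reduces to controlling the diagonal blocks $D_i$.

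For the first inclusion, I would use that $\mathcal{S}_{(n_2,\dots,n_d)}$ is a convex cone, hence closed under addition; this is immediate by induction on the number of levels, since summing preserves both positive semidefiniteness and the recursive block structure. As each $B_{i,j} \in \mathcal{S}_{(n_2,\dots,n_d)}$, it follows that $D_i \in \mathcal{S}_{(n_2,\dots,n_d)}$. A block-diagonal matrix with SPSD diagonal blocks is itself SPSD, so $\mathcal{L}(\mathcal{B}) \in \mathcal{S}_n$; together with the fact that all of its top-level blocks lie in $\mathcal{S}_{(n_2,\dots,n_d)}$, this yields $\mathcal{L}(\mathcal{B}) \in \mathcal{S}_{\mathbf{n}}$.

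The second inclusion needs the single extra step of upgrading each $D_i$ to positive definiteness, and this is where the crux lies. The key observation is that the diagonal block $B_{i,i}$ is a principal submatrix of the SPD matrix $\mathcal{B}$: testing $\mathcal{B}$ against vectors supported only on the $i$th block shows $B_{i,i} \succ 0$, whereas the remaining blocks $B_{i,j}$ with $j \neq i$ are merely SPSD. Hence for any nonzero $\mathbf{x} \in \mathbb{R}^{r_1}$,
\[
\mathbf{x}^T D_i \mathbf{x} = \sum_{j=1}^{n_1} \mathbf{x}^T B_{i,j} \mathbf{x} \geq \mathbf{x}^T B_{i,i} \mathbf{x} > 0,
\]
the inequality following because every discarded term is nonnegative. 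Thus $D_i \succ 0$, and combined with the previous paragraph $D_i \in \mathcal{S}_{(n_2,\dots,n_d)}^+$; the block-diagonal $\mathcal{L}(\mathcal{B})$ is then SPD with the required top-level block structure, so $\mathcal{L}(\mathcal{B}) \in \mathcal{S}_{\mathbf{n}}^+$.

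I expect this positive-definiteness step to be the main (and essentially the only) obstacle, since it is precisely where the nontrivial hypothesis that the off-diagonal blocks are positive semidefinite — guaranteed for the mass matrix by Lemma \ref{lem: mass_properties} — becomes indispensable. For a generic SPD matrix the block row-sum $D_i$ could fail to be symmetric, let alone definite; here, summing the off-diagonal blocks contributes only nonnegative quadratic forms, so the strictly positive diagonal block $B_{i,i}$ is never cancelled.
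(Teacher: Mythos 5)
Your proof is correct and follows essentially the same route as the paper's: closure of $\mathcal{S}_{(n_2,\dots,n_d)}$ under addition gives the SPSD statement, and the observation that $D_i \succeq B_{i,i} \succ 0$ (with $B_{i,i}$ a principal submatrix of the SPD matrix $\mathcal{B}$) supplies exactly the definiteness step that the paper compresses into its parenthetical remark. The only point you skip is the case $d=1$, which the paper dispatches separately: there the scalar lumping operator of Definition \ref{def: lumping} uses absolute values (off-diagonal entries of an SPD matrix need not be nonnegative, so your termwise bound would not literally apply), but that case is immediate since $\sum_{j} |b_{ij}| \geq b_{ii} > 0$.
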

\begin{proof}
Let $\mathcal{B} \in \mathcal{S}_{\mathbf{n}}$. The result for $d=1$ is obvious from Definition \ref{def: lumping}. Now assume that $d \geq 2$ and let $\mathcal{L}(\mathcal{B})$ be constructed following Definition \ref{def: block_lumping}. The proof simply follows from the stability of $\mathcal{S}_{\mathbf{n}}$ under addition: since $B_{i,j} \in \mathcal{S}_{(n_2,\dots,n_d)}$ for all $i,j=1,\dots n_1$, then 
\begin{equation*}
    D_i=\sum_{j=1}^{n_1} B_{i,j} \in \mathcal{S}_{(n_2,\dots,n_d)}.
\end{equation*}
Since $\mathcal{L}(\mathcal{B})$ is block diagonal with SPSD blocks, it is itself SPSD and $\mathcal{L}(\mathcal{B}) \in \mathcal{S}_{\mathbf{n}}$. The proof of the second statement is completely analogous (noting that for matrices $\mathcal{B} \in \mathcal{S}_{\mathbf{n}}^+$ all diagonal blocks and diagonal sub-blocks down the hierarchy are positive definite).
\end{proof}

The next lemma is the block generalization of \citep[][Lemma 3.9]{voet2023mathematical}.

\begin{lemma}
\label{lem: block_positivity}
Let $\mathcal{B} \in \mathcal{S}_{\mathbf{n}}^+$ with $\mathbf{n}=(n_1,\dots,n_d) \in \mathbb{N}^d$ and $d \geq 2$. Then,
\begin{equation*}
    \mathcal{L}(\mathcal{B}) \succeq \mathcal{B}.
\end{equation*}
\end{lemma}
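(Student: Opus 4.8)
The plan is to imitate, at the block level, the graph Laplacian computation highlighted in the remark following Definition \ref{def: lumping}. First I would fix an arbitrary vector $\mathbf{x} \in \mathbb{R}^{n_1 r_1}$, with $r_1 = n_2\cdots n_d$, and partition it conformally with the block structure of $\mathcal{B}$ as $\mathbf{x} = (\mathbf{x}_1,\dots,\mathbf{x}_{n_1})$, where each $\mathbf{x}_i \in \mathbb{R}^{r_1}$. Using $\mathcal{L}(\mathcal{B})=\diag(D_1,\dots,D_{n_1})$ with $D_i=\sum_{j} B_{i,j}$, the two quadratic forms expand as
\[
\mathbf{x}^T\mathcal{L}(\mathcal{B})\mathbf{x} = \sum_{i,j=1}^{n_1} \mathbf{x}_i^T B_{i,j}\mathbf{x}_i \quad\text{and}\quad \mathbf{x}^T\mathcal{B}\mathbf{x} = \sum_{i,j=1}^{n_1}\mathbf{x}_i^T B_{i,j}\mathbf{x}_j,
\]
so that $\mathbf{x}^T(\mathcal{L}(\mathcal{B})-\mathcal{B})\mathbf{x} = \sum_{i,j} \mathbf{x}_i^T B_{i,j}(\mathbf{x}_i - \mathbf{x}_j)$.

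The crucial structural input, which distinguishes this from a generic SPD matrix, is that every block $B_{i,j}$ — including the off-diagonal ones — belongs to $\mathcal{S}_{(n_2,\dots,n_d)}$ and is therefore \emph{symmetric} and positive semidefinite. Symmetry of $\mathcal{B}$ combined with symmetry of each block gives $B_{i,j} = B_{j,i}$ (not merely $B_{i,j}=B_{j,i}^T$). I would then symmetrize the double sum by relabeling $i \leftrightarrow j$ and averaging the two expressions, which telescopes into
\[
\mathbf{x}^T(\mathcal{L}(\mathcal{B})-\mathcal{B})\mathbf{x} = \frac{1}{2}\sum_{i,j=1}^{n_1}(\mathbf{x}_i - \mathbf{x}_j)^T B_{i,j}(\mathbf{x}_i - \mathbf{x}_j).
\]
This is exactly the block analogue of the scalar identity $\mathbf{x}^T(\mathcal{L}(B)-B)\mathbf{x} = \tfrac12\sum_{i,j}(x_i-x_j)^2 b_{ij}$ quoted earlier, with the nonnegative scalar weight $b_{ij}$ replaced by the positive semidefinite matrix weight $B_{i,j}$ sandwiched in a quadratic form. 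Since each $B_{i,j} \succeq 0$, every summand $(\mathbf{x}_i - \mathbf{x}_j)^T B_{i,j}(\mathbf{x}_i - \mathbf{x}_j)$ is nonnegative, whence the whole sum is nonnegative and $\mathcal{L}(\mathcal{B}) \succeq \mathcal{B}$, as $\mathbf{x}$ was arbitrary.

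The step requiring the most care is the symmetrization, since it relies on each off-diagonal block being \emph{individually} symmetric, so that $B_{i,j} = B_{j,i}$; without this, neither the telescoping identity nor the termwise nonnegativity survives. This block-level symmetric positive semidefiniteness is precisely the nontrivial content of membership in $\mathcal{S}_{\mathbf{n}}^+$ established in Lemma \ref{lem: mass_properties}, so I would invoke that structure explicitly rather than merely the positive definiteness of $\mathcal{B}$. It is worth noting that the argument uses only that the top-level blocks $B_{i,j}$ are SPSD; the finer hierarchical structure and the global definiteness of $\mathcal{B}$ play no role here (the latter enters only through Lemma \ref{lem: well_posedness}, guaranteeing that $\mathcal{L}(\mathcal{B})$ again lands in $\mathcal{S}_{\mathbf{n}}^+$).
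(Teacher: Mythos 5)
Your proof is correct and follows essentially the same route as the paper's: both expand the quadratic form $\mathbf{x}^T(\mathcal{L}(\mathcal{B})-\mathcal{B})\mathbf{x}$ blockwise, use the block symmetry $B_{i,j}=B_{j,i}^T=B_{j,i}$ and blockwise positive semidefiniteness to symmetrize, and arrive at the identity $\frac{1}{2}\sum_{i,j}(\mathbf{x}_i-\mathbf{x}_j)^TB_{i,j}(\mathbf{x}_i-\mathbf{x}_j)\geq 0$. Your closing observation that only the SPSD property of the top-level blocks is needed is also made in the paper (Remark \ref{rem: extension}).
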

\begin{proof}
Let $\mathcal{B} \in \mathcal{S}_{\mathbf{n}}^+$,
\begin{equation*}
    \mathcal{B}
    =
    \begin{pmatrix}
        B_{1,1} & \cdots & B_{1,n_1} \\
        \vdots & \ddots & \vdots \\
        B_{n_1,1} & \cdots & B_{n_1,n_1}
    \end{pmatrix},
    \quad \mathbf{x}
    =\begin{pmatrix}
        \mathbf{x}_1 \\
        \vdots \\
        \mathbf{x}_{n_1}
    \end{pmatrix}.
\end{equation*}
Then, using the fact that $B_{i,j}=B_{j,i}^T=B_{j,i}$ and $B_{i,j} \succeq 0$ for all $i,j=1,\dots,n_1$,
\begin{align*}
    &\mathbf{x}^T(\mathcal{L}(\mathcal{B})-\mathcal{B})\mathbf{x} \\
    &= \sum_{i=1}^{n_1} \mathbf{x}_i^T \left(\sum_{j=1}^{n_1} B_{i,j}\right)\mathbf{x}_i-\sum_{i,j=1}^{n_1} \mathbf{x}_{i}^TB_{i,j}\mathbf{x}_j \\
    &=\frac{1}{2}\left(\sum_{i=1}^{n_1} \mathbf{x}_i^T \left(\sum_{j=1}^{n_1} B_{i,j}\right)\mathbf{x}_i-2\sum_{i,j=1}^{n_1} \mathbf{x}_{i}^TB_{i,j}\mathbf{x}_j+\sum_{j=1}^{n_1} \mathbf{x}_j^T \left(\sum_{i=1}^{n_1} B_{j,i}\right)\mathbf{x}_j\right) \\
    &=\frac{1}{2}\sum_{i,j=1}^{n_1} \mathbf{x}_i^TB_{i,j}\mathbf{x}_i-2\mathbf{x}_i^TB_{i,j}\mathbf{x}_j+\mathbf{x}_j^TB_{i,j}\mathbf{x}_j \\
    &=\frac{1}{2}\sum_{i,j=1}^{n_1} (\mathbf{x}_i-\mathbf{x}_j)^TB_{i,j}(\mathbf{x}_i-\mathbf{x}_j) \geq 0,
\end{align*}
which proves that $\mathcal{L}(\mathcal{B})-\mathcal{B} \succeq 0$.
\end{proof}

\begin{remark}
\label{rem: extension}
Interestingly, Lemma \ref{lem: block_positivity} also holds for the larger set of symmetric block matrices with SPSD blocks. Moreover, denoting $\mathbf{e}$ the vector of all ones, $(1,\mathbf{e})$ is an eigenpair of $(\mathcal{B}, \mathcal{L}(\mathcal{B}))$ regardless of whether $\mathcal{B}$ is nonnegative. For nonnegative matrices and $d=1$, our results simply reduce to those of \cite{voet2023mathematical}. 
\end{remark}

In \cite{voet2023mathematical}, the authors considered the matrix splitting $B=D_i+R_i$, where $D_i$ consists of all super and sub-diagonals strictly smaller than $i$ and $R_i$ is the remainder. Lumped matrices were then defined by lumping the remainder $R_i$ and adding it to $D_i$. The block lumped matrices introduced in Definition \ref{def: block_lumped_matrices} are the block analogue of those constructed in \cite{voet2023mathematical} and feature blockwise operations instead of entrywise operations.

\begin{definition}[Block lumped matrices]
\label{def: block_lumped_matrices}
Let $\mathcal{B} \in \mathcal{S}_{\mathbf{n}}^+$ with $\mathbf{n} \in \mathbb{N}^d$ and $d \geq 2$ and consider the matrix splitting $\mathcal{B}=\mathcal{D}_i+\mathcal{R}_i$ where $\mathcal{D}_i$ consists of all super and sub block diagonals strictly smaller than $i$ and $\mathcal{R}_i$ is the remainder. We define the sequence of matrices $\mathcal{P}_i=\mathcal{D}_i+\mathcal{L}(\mathcal{R}_i)$ for $i=1,\dots,n_1$. In particular, we observe that $\mathcal{P}_1=\mathcal{L}(\mathcal{B})$ and $\mathcal{P}_{n_1}=\mathcal{B}$.
\end{definition}

By construction, $\mathcal{P}_i$ only reduces the highest hierarchical level of bandedness: if $\mathcal{B}$ is $d$-level banded with bandwidths $(b_1,b_2,\dots,b_d)$, then $\mathcal{P}_i$ (with $i \leq b_1+1$) is $d$-level banded with bandwidths $(i-1,b_2,\dots,b_d)$. Figure \ref{fig: block_splitting} shows an example for a $2$-level banded matrix $\mathcal{P}_2$ with bandwidths $(1,3)$ (block tridiagonal matrix) constructed from a $2$-level banded matrix $\mathcal{B}$ with bandwidths $(3,3)$ (block septadiagonal matrix). The next theorem provides a generalization of \citep[][Theorem 3.21]{voet2023mathematical}. From now on, we will always assume that the eigenvalues are ordered increasingly.

\begin{figure}[htbp]
    \centering
    \includegraphics[scale=0.25]{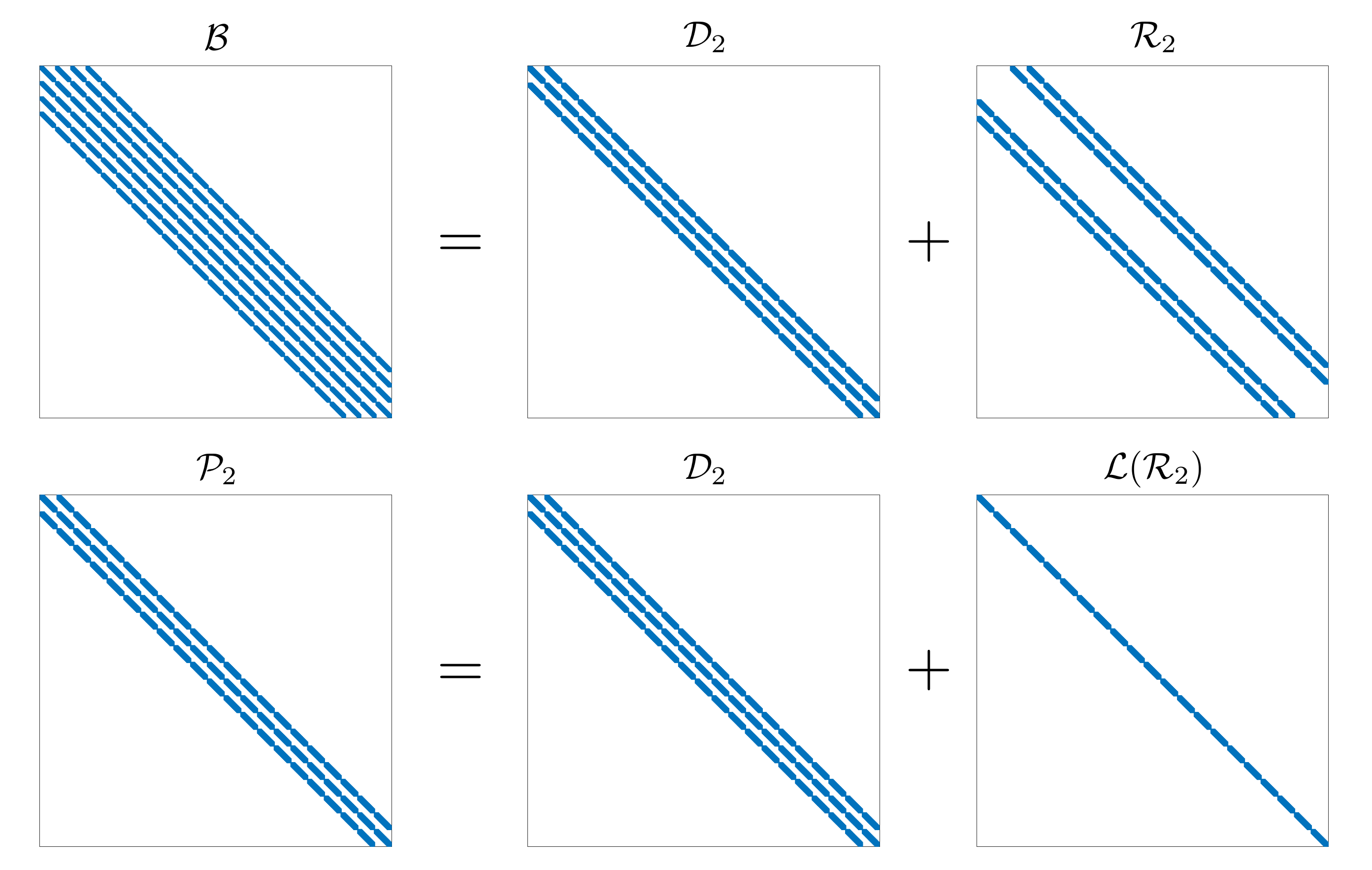}
    \caption{Block tridiagonal matrix $\mathcal{P}_2$ constructed from a block septadiagonal matrix $\mathcal{B}$}
    \label{fig: block_splitting}
\end{figure}

\begin{theorem}
\label{th: family_block_lumped_mass_prec}
Let $\mathcal{B} \in \mathcal{S}_{\mathbf{n}}^+$ with $\mathbf{n}=(n_1,\dots,n_d) \in \mathbb{N}^d$ and $d \geq 2$. Then, the sequence of matrices $\{\mathcal{P}_i\}_{i=1}^{n_1}$ constructed from $\mathcal{B}$ according to Definition \ref{def: block_lumped_matrices} satisfies the following properties:
\begin{enumerate}[noitemsep]
    \item $\Lambda(\mathcal{B}, \mathcal{P}_i) \subset (0,1]$ for all $i=1,\dots,n_1$,
    \item $\lambda_k(\mathcal{B}, \mathcal{P}_i) \leq \lambda_k(\mathcal{B}, \mathcal{P}_{i+1})$ for all $k$ and any given $i=1,\dots, n_1-1$,
    \item $\lambda_n(\mathcal{B}, \mathcal{P}_i)=1$ for all $i=1,\dots,n_1$.
\end{enumerate}
\end{theorem}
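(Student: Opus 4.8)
The plan is to reduce all three statements to a single structural fact: that both $\mathcal{P}_i-\mathcal{B}$ and the consecutive difference $\mathcal{P}_i-\mathcal{P}_{i+1}$ are ``block graph Laplacians'', hence positive semidefinite with the all-ones vector in their kernel. First I would record the two algebraic identities underpinning everything. Writing $\mathcal{C}_i$ for the symmetric block band of $\mathcal{B}$ at distance exactly $i$ (so that $\mathcal{R}_i=\mathcal{C}_i+\mathcal{R}_{i+1}$ and $\mathcal{D}_{i+1}=\mathcal{D}_i+\mathcal{C}_i$), the linearity of the block lumping operator $\mathcal{L}$ yields
\begin{equation*}
  \mathcal{P}_i-\mathcal{B}=\mathcal{L}(\mathcal{R}_i)-\mathcal{R}_i, \qquad \mathcal{P}_i-\mathcal{P}_{i+1}=\mathcal{L}(\mathcal{C}_i)-\mathcal{C}_i.
\end{equation*}
Both $\mathcal{R}_i$ and $\mathcal{C}_i$ are symmetric block matrices whose nonzero blocks are off-diagonal blocks of $\mathcal{B}$ (SPSD since $\mathcal{B}\in\mathcal{S}_{\mathbf{n}}^+$) with zero diagonal blocks. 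They are \emph{not} in $\mathcal{S}_{\mathbf{n}}^+$, so I would invoke the extension of Lemma \ref{lem: block_positivity} recorded in Remark \ref{rem: extension}, valid for any symmetric block matrix with SPSD blocks, to conclude $\mathcal{L}(\mathcal{R}_i)\succeq\mathcal{R}_i$ and $\mathcal{L}(\mathcal{C}_i)\succeq\mathcal{C}_i$. This delivers the two Loewner inequalities $\mathcal{P}_i\succeq\mathcal{B}$ and $\mathcal{P}_i\succeq\mathcal{P}_{i+1}$ that drive the rest of the argument.

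For statement (1), since $\mathcal{P}_i\succeq\mathcal{B}\succ 0$ the matrix $\mathcal{P}_i$ is SPD, so the pencil $(\mathcal{B},\mathcal{P}_i)$ has only positive generalized eigenvalues. Moreover, for a generalized eigenpair $(\lambda,\mathbf{x})$ one has $\lambda=\mathbf{x}^T\mathcal{B}\mathbf{x}/\mathbf{x}^T\mathcal{P}_i\mathbf{x}$, and the inequality $\mathbf{x}^T\mathcal{P}_i\mathbf{x}\geq\mathbf{x}^T\mathcal{B}\mathbf{x}$ (from $\mathcal{P}_i\succeq\mathcal{B}$) forces every eigenvalue into $(0,1]$. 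For statement (2), I would combine $\mathcal{P}_i\succeq\mathcal{P}_{i+1}\succ 0$ and $\mathcal{B}\succeq 0$ with the Courant–Fischer min–max characterization of generalized eigenvalues: enlarging the denominator matrix pointwise can only decrease each Rayleigh quotient, hence each min–max value, giving $\lambda_k(\mathcal{B},\mathcal{P}_i)\leq\lambda_k(\mathcal{B},\mathcal{P}_{i+1})$ termwise.

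For statement (3) I would exhibit the eigenvalue $1$ explicitly. As in Remark \ref{rem: extension}, the block graph Laplacian $\mathcal{L}(\mathcal{R}_i)-\mathcal{R}_i$ annihilates the all-ones vector $\mathbf{e}$: partitioning $\mathbf{e}$ into blocks, each block is the same all-ones vector, so the row-block sums defining $\mathcal{L}(\mathcal{R}_i)$ exactly cancel the action of $\mathcal{R}_i$ on $\mathbf{e}$. Consequently $\mathcal{P}_i\mathbf{e}=\mathcal{B}\mathbf{e}$, i.e. $(1,\mathbf{e})$ is a generalized eigenpair of $(\mathcal{B},\mathcal{P}_i)$, so $1\in\Lambda(\mathcal{B},\mathcal{P}_i)$; together with statement (1) this forces $\lambda_n(\mathcal{B},\mathcal{P}_i)=1$.

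I expect the main obstacle to be the bookkeeping that isolates the clean telescoping identity $\mathcal{P}_i-\mathcal{P}_{i+1}=\mathcal{L}(\mathcal{C}_i)-\mathcal{C}_i$, and the careful verification that $\mathcal{R}_i$ and $\mathcal{C}_i$ genuinely meet the hypotheses of the extended Lemma \ref{lem: block_positivity} despite carrying zero diagonal blocks, so that one must appeal to Remark \ref{rem: extension} rather than the lemma as originally stated. Once this reduction to two Loewner inequalities plus the kernel of the block Laplacian is in place, statements (1)–(3) follow from standard Loewner-order and Courant–Fischer arguments.
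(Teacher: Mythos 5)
Your proof is correct and takes essentially the same route as the paper: the paper's proof is a one-line appeal to the one-dimensional argument of \cite{voet2023mathematical} carried out ``using Lemma \ref{lem: block_positivity} and Remark \ref{rem: extension}'', which is precisely what you reconstruct — the telescoping identities $\mathcal{P}_i-\mathcal{B}=\mathcal{L}(\mathcal{R}_i)-\mathcal{R}_i$ and $\mathcal{P}_i-\mathcal{P}_{i+1}=\mathcal{L}(\mathcal{C}_i)-\mathcal{C}_i$, the block graph Laplacian inequality applied via the extension in Remark \ref{rem: extension} (correctly invoked, since $\mathcal{R}_i$ and $\mathcal{C}_i$ have zero diagonal blocks and so lie outside $\mathcal{S}_{\mathbf{n}}^+$), and the eigenpair $(1,\mathbf{e})$ for the last statement. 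Your chain of Loewner inequalities is also exactly the relation \eqref{eq: Loewner_sequence_single_patch} the paper extracts from this proof.
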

\begin{proof}
The proof is analogous to \citep[][Theorem 3.21]{voet2023mathematical} using Lemma \ref{lem: block_positivity} and Remark \ref{rem: extension}.
\end{proof}
The proof arguments of Theorem \ref{th: family_block_lumped_mass_prec} also show that the block lumped matrices satisfy
\begin{equation}
\label{eq: Loewner_sequence_single_patch}
    \mathcal{L}(\mathcal{B})=\mathcal{P}_1 \succeq \mathcal{P}_2 \succeq \dots \succeq \mathcal{P}_{n_1-1} \succeq \mathcal{P}_{n_1}=\mathcal{B}.
\end{equation}
This ordering implies that for a matrix $\mathcal{A} \in \mathcal{S}_n$,
\begin{equation*}
    \lambda_k(\mathcal{A}, \mathcal{P}_1) \leq \lambda_k(\mathcal{A}, \mathcal{P}_2) \leq \dots \leq \lambda_k(\mathcal{A}, \mathcal{P}_{n_1-1}) \leq \lambda_k(\mathcal{A}, \mathcal{P}_{n_1}) \quad 1 \leq k \leq n.
\end{equation*}
Clearly, block diagonal matrices such as $\mathcal{P}_1$ are very appealing given that linear systems can be solved in parallel for each block. The block tridiagonal case can still be treated efficiently by a block forward elimination and backward substitution algorithm; see e.g. \citep[][Section 4.5.1]{golub2013matrix} for the details.

\subsection{Hierarchical mass lumping}
\label{se: hierarchical_mass_lumping}
Mass lumping can also be applied down the hierarchical structure in many different ways. The following lemma sets the foundation and generalizes Lemma \ref{lem: well_posedness}.

\begin{lemma}
\label{lem: Pi_in_Sn+}
If $\mathcal{B} \in \mathcal{S}_{\mathbf{n}}^+$ with $\mathbf{n}=(n_1,\dots,n_d) \in \mathbb{N}^d$, then $\mathcal{P}_k \in \mathcal{S}_{\mathbf{n}}^+$ for all $k=1,\dots,n_1$.   
\end{lemma}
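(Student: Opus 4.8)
The plan is to unpack the definition of $\mathcal{S}_{\mathbf{n}}^+$ and verify its two requirements for $\mathcal{P}_k$ separately: first that $\mathcal{P}_k$ is globally symmetric positive definite, i.e. $\mathcal{P}_k \in \mathcal{S}_n^+$, and second that each of its top-level blocks lies in $\mathcal{S}_{(n_2,\dots,n_d)}$. Neither step requires a fresh induction on $d$, since the recursion down the hierarchy will be absorbed entirely by the stability of $\mathcal{S}_{(n_2,\dots,n_d)}$ under addition.

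For the positive definiteness I would exploit the splitting $\mathcal{B}=\mathcal{D}_k+\mathcal{R}_k$ directly. Since $\mathcal{P}_k=\mathcal{D}_k+\mathcal{L}(\mathcal{R}_k)$, the diagonal band $\mathcal{D}_k$ cancels and
\[
\mathcal{P}_k-\mathcal{B}=\mathcal{L}(\mathcal{R}_k)-\mathcal{R}_k.
\]
The remainder $\mathcal{R}_k$ is a symmetric block matrix whose blocks are either blocks $B_{i,j}$ of $\mathcal{B}$ (hence SPSD) or zero, so the extension of Lemma \ref{lem: block_positivity} recorded in Remark \ref{rem: extension} applies and yields $\mathcal{L}(\mathcal{R}_k)\succeq\mathcal{R}_k$. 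Therefore $\mathcal{P}_k\succeq\mathcal{B}\succ 0$, which is exactly the Loewner relation already displayed in \eqref{eq: Loewner_sequence_single_patch}, so $\mathcal{P}_k$ is positive definite. Symmetry is immediate because $\mathcal{D}_k$ is a symmetric band of the symmetric matrix $\mathcal{B}$ and the lumped diagonal blocks $D_i=\sum_{|i-j|\geq k}B_{i,j}$ are sums of the symmetric blocks $B_{i,j}$. I would stress that \eqref{eq: Loewner_sequence_single_patch} rests only on Lemma \ref{lem: block_positivity} applied to the off-diagonal band and not on the present claim, so there is no circularity.

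For the hierarchical block structure I would read off the top-level blocks of $\mathcal{P}_k$: the block in position $(i,i)$ equals $B_{i,i}+\sum_{|i-j|\geq k}B_{i,j}$, the near blocks with $0<|i-j|<k$ remain the original $B_{i,j}$, and the far blocks with $|i-j|\geq k$ vanish. Each of these belongs to $\mathcal{S}_{(n_2,\dots,n_d)}$: the blocks $B_{i,j}$ do so because $\mathcal{B}\in\mathcal{S}_{\mathbf{n}}^+$, the zero matrix trivially does, and any sum of such blocks does too because $\mathcal{S}_{(n_2,\dots,n_d)}$ is a convex cone closed under addition, the same stability invoked in the proof of Lemma \ref{lem: well_posedness}. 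Combining this with the positive definiteness from the previous step gives $\mathcal{P}_k\in\mathcal{S}_{\mathbf{n}}^+$ by the definition of the set.

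The argument has no genuinely hard step; the only thing to get right is the clean separation of the two defining conditions and the observation that closure of $\mathcal{S}_{(n_2,\dots,n_d)}$ under addition already encodes all the lower-level hierarchical bookkeeping, so that summing and zeroing blocks cannot break the structure. The one point warranting an explicit word is the non-circularity of invoking \eqref{eq: Loewner_sequence_single_patch}, equivalently Lemma \ref{lem: block_positivity}, for the definiteness, which I would make clear.
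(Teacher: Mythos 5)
Your proof is correct and takes essentially the same route as the paper's: the top-level blocks of $\mathcal{P}_k$ lie in $\mathcal{S}_{(n_2,\dots,n_d)}$ by closure under addition, and $\mathcal{P}_k \succeq \mathcal{B} \succ 0$ yields global positive definiteness, hence $\mathcal{P}_k \in \mathcal{S}_{\mathbf{n}}^+$; you merely make the Loewner step explicit through the identity $\mathcal{P}_k-\mathcal{B}=\mathcal{L}(\mathcal{R}_k)-\mathcal{R}_k$ and Remark \ref{rem: extension}, where the paper instead cites the already established chain \eqref{eq: Loewner_sequence_single_patch}. The only (minor) omission is the degenerate case $d=1$, to which Definition \ref{def: block_lumped_matrices} does not apply and which the paper dispatches by citing \cite[Theorem 3.21]{voet2023mathematical}.
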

\begin{proof}
The case $d=1$ was already proved in \citep[][Theorem 3.21]{voet2023mathematical}. For $d \geq 2$, by definition $\mathcal{P}_k=\mathcal{D}_k+\mathcal{L}(\mathcal{R}_k)$ and is partitioned similarly to \eqref{eq: block_structure} with blocks $P_{i,j}$ for $i,j=1,\dots,n_1$. Since $\mathcal{S}_{\mathbf{n}}$ is closed under addition, $P_{i,j} \in S_{(n_2,\dots,n_d)}$ for all $i,j=1,\dots,n_1$. Moreover, $\mathcal{P}_k \succeq \mathcal{B} \succ 0$ implies that $\mathcal{P}_k \in \mathcal{S}_n^+$ and consequently $\mathcal{P}_k \in S_{\mathbf{n}}^+$.
\end{proof}

Although $\mathcal{P}_1=\mathcal{L}(\mathcal{B})$ is block diagonal, for high-dimensional problems the size of each block may still be quite large. However, following Lemma \ref{lem: Pi_in_Sn+}, $\mathcal{P}_1 \in \mathcal{S}_{\mathbf{n}}^+$, which suggests recursively applying the lumping operator on its diagonal blocks, which are in $\mathcal{S}_{(n_2,\dots,n_d)}^+$. On the second to last level, all diagonal blocks are in $\mathcal{S}_{n_d}^+$. At this stage, if $\mathcal{B}$ is nonnegative, applying the standard row-sum technique results in the standard row-sum lumped mass matrix. As we progress down the hierarchical structure, the number of diagonal blocks increases but their size decreases. Indeed, on the $k$th level, the number of diagonal blocks is $q_k=\prod_{j=1}^k n_j$ and their size is $r_k=\prod_{j=k+1}^d n_j$ such that for all $k$ the product $q_kr_k=n$ is the size of the full matrix. The following definition formalizes the procedure.

\begin{definition}[Hierarchical lumped matrices]
\label{def: hierarchical_lumped_matrices}
Let $\mathcal{B} \in \mathcal{S}_{\mathbf{n}}^+$ with $\mathbf{n}=(n_1,\dots,n_d) \in \mathbb{N}^d$ and $d \geq 2$. Set $\mathcal{H}_1=\mathcal{L}(\mathcal{B})$ and let $\mathcal{H}_k$ for $1 \leq k \leq d-1$ be such that
\begin{equation*}
    \mathcal{H}_k= \diag(D_{k,1},\dots,D_{k,q_k})
\end{equation*}
where $q_k=\prod_{j=1}^k n_j$. Then, $\mathcal{H}_{k+1}$ is defined from $\mathcal{H}_k$ as 
\begin{equation*}
    \mathcal{H}_{k+1} = \diag(\mathcal{L}(D_{k,1}),\dots,\mathcal{L}(D_{k,q_k})).
\end{equation*}
\end{definition}
Figure \ref{fig: sparsity_pattern_hierarchical_ML}, for example, shows the sparsity pattern of a matrix $\mathcal{B} \in \mathcal{S}_{(n_1,n_2,n_3)}^+$ together with its hierarchical lumped mass matrices $\mathcal{H}_k$ for $k=1,2,3$. By construction, hierarchical mass lumping reduces the bandwidth down the hierarchical structure: if $\mathcal{B}$ is $d$-level banded with bandwidths $(b_1,b_2,\dots,b_d)$, then $\mathcal{H}_k$ is $d$-level banded with bandwidths $(0,\dots,0,b_{k+1},\dots,b_d)$. Note that the numbering of $\mathcal{H}_k$ refers to the hierarchical level and hence typically ranges from $1$ to $d$ (contrary to the numbering of $\mathcal{P}_i$, which ranges from $1$ to $n_1$). Similarly to \eqref{eq: Loewner_sequence_single_patch}, hierarchical lumped matrices also satisfy an order relation.

\begin{corollary}
\label{cor: hierarchical_order}
Let $\mathcal{B} \in \mathcal{S}_{\mathbf{n}}^+$ with $\mathbf{n} \in \mathbb{N}^d$ and $d \geq 2$. Then, the sequence of matrices $\{\mathcal{H}_k\}_{k=1}^d$ constructed from $\mathcal{B}$ according to Definition \ref{def: hierarchical_lumped_matrices} satisfies \begin{equation*}
    \mathcal{H}_d \succeq \mathcal{H}_{d-1} \succeq \dots \succeq \mathcal{H}_1.
\end{equation*} 
\end{corollary}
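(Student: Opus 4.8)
The plan is to reduce the chain of Loewner inequalities to the consecutive ones $\mathcal{H}_{k+1} \succeq \mathcal{H}_k$ for $k=1,\dots,d-1$, since $\succeq$ is transitive. Both $\mathcal{H}_k$ and $\mathcal{H}_{k+1}$ are block diagonal and, crucially, the partition underlying $\mathcal{H}_{k+1}$ refines the one underlying $\mathcal{H}_k$: within the $i$-th diagonal block of $\mathcal{H}_k$, namely $D_{k,i}$, the matrix $\mathcal{H}_{k+1}$ carries exactly $\mathcal{L}(D_{k,i})$. Hence $\mathcal{H}_{k+1}-\mathcal{H}_k$ is block diagonal with diagonal blocks $\mathcal{L}(D_{k,i})-D_{k,i}$, and since a symmetric block diagonal matrix is positive semidefinite precisely when each of its diagonal blocks is, the whole statement reduces to showing $\mathcal{L}(D_{k,i}) \succeq D_{k,i}$ for every level $k$ and every block index $i$.

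First I would establish that each block $D_{k,i}$ lies in $\mathcal{S}_{(n_{k+1},\dots,n_d)}^+$, so that the hypotheses of Lemma \ref{lem: block_positivity} are available. This follows by induction on $k$ from the well-posedness Lemma \ref{lem: well_posedness}: $\mathcal{H}_1=\mathcal{L}(\mathcal{B}) \in \mathcal{S}_{\mathbf{n}}^+$, so its diagonal blocks $D_{1,i}$ are in $\mathcal{S}_{(n_2,\dots,n_d)}^+$ (they are SPD principal submatrices whose sub-blocks are SPSD); applying $\mathcal{L}$ to each such block again lands in $\mathcal{S}_{(n_2,\dots,n_d)}^+$ by Lemma \ref{lem: well_posedness}, and peeling off its leading block structure shows its diagonal blocks $D_{2,j}$ are in $\mathcal{S}_{(n_3,\dots,n_d)}^+$, and so on down the hierarchy. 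With this in hand, for every $k$ with $d-k \geq 2$ Lemma \ref{lem: block_positivity} applied to $D_{k,i} \in \mathcal{S}_{(n_{k+1},\dots,n_d)}^+$ gives $\mathcal{L}(D_{k,i}) \succeq D_{k,i}$, which settles those levels.

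The one step needing separate care — the main obstacle — is the bottom level $k=d-1$, where the blocks $D_{d-1,i}$ lie in $\mathcal{S}_{n_d}^+$ and have hierarchical dimension $d-k=1$, so Lemma \ref{lem: block_positivity} (which assumes $d \geq 2$) no longer applies. Here the \emph{scalar} lumping operator of Definition \ref{def: lumping} is the relevant one, and I would instead invoke the $d=1$ inequality $\mathcal{L}(B) \succeq B$ recorded in Remark \ref{rem: extension} and originally established in \cite{voet2023mathematical}; because the lumping operator takes absolute values, this holds for every symmetric $B$ (and in the isogeometric setting, where $\mathcal{B}$ is nonnegative, it is exactly the graph-Laplacian identity $\mathbf{x}^T(\mathcal{L}(B)-B)\mathbf{x}=\frac12\sum_{i,j}(x_i-x_j)^2 b_{ij} \geq 0$ discussed in Section \ref{se: structured_mass_lumping}). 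Applying this to each $D_{d-1,i}$ closes the last level, and chaining $\mathcal{H}_1 \preceq \mathcal{H}_2 \preceq \dots \preceq \mathcal{H}_d$ yields the corollary. I expect the only genuine care to be the bookkeeping of the nested partitions — specifically the observation that the partition for $\mathcal{H}_{k+1}$ refines that for $\mathcal{H}_k$, so that $\mathcal{H}_{k+1}-\mathcal{H}_k$ remains block diagonal and its definiteness can be checked block by block.
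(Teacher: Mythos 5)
Your proof is correct and takes essentially the same route as the paper, whose entire proof is the statement that the corollary is an ``obvious consequence'' of Lemma \ref{lem: block_positivity} --- i.e.\ precisely your blockwise application of that lemma to the diagonal blocks $D_{k,i}$ at each level, justified by the nesting $D_{k,i} \in \mathcal{S}_{(n_{k+1},\dots,n_d)}^+$ from Lemma \ref{lem: well_posedness}. Your separate treatment of the bottom level $k=d-1$, where the blocks are in $\mathcal{S}_{n_d}^+$ so the $d \geq 2$ hypothesis of Lemma \ref{lem: block_positivity} fails and one must instead invoke the scalar inequality $\mathcal{L}(B) \succeq B$ from Remark \ref{rem: extension} and \cite{voet2023mathematical}, is a genuine detail that the paper's one-line proof leaves implicit.
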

\begin{proof}
The proof is an obvious consequence of Lemma \ref{lem: block_positivity}.    
\end{proof}

\begin{figure}[htbp]
     \centering
     \begin{subfigure}[t]{0.22\textwidth}
    \centering
    \includegraphics[width=\textwidth]{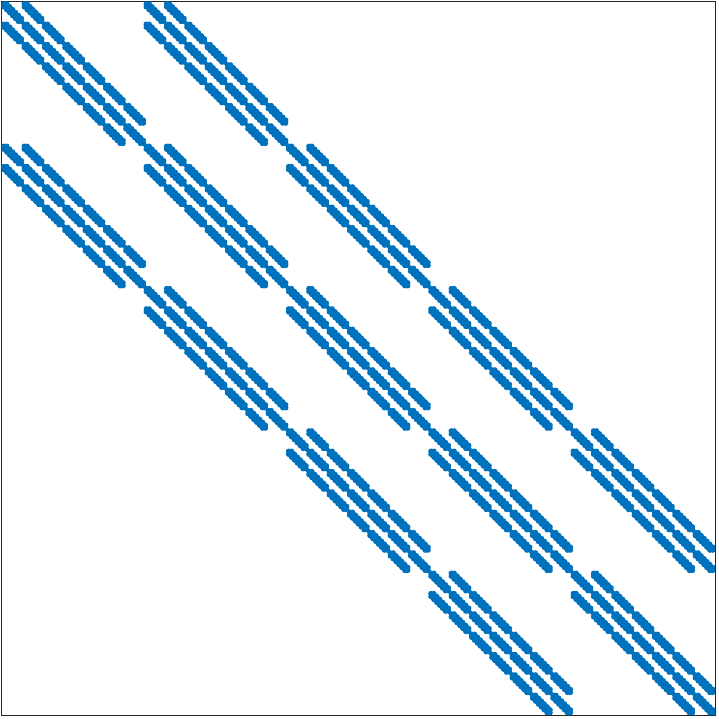}
    \caption{$\mathcal{B}$}
    \label{fig: 3D_Laplace_magnet_sparsity_M_n4_4_3_p1}
     \end{subfigure}
     \hfill
     \begin{subfigure}[t]{0.22\textwidth}
    \centering
    \includegraphics[width=\textwidth]{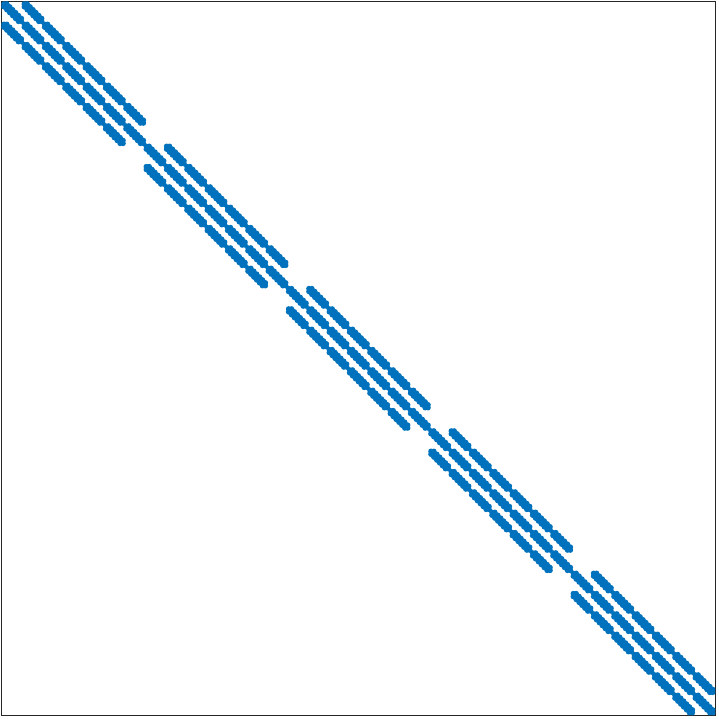}
    \caption{$\mathcal{H}_1$}
    \label{fig: 3D_Laplace_magnet_sparsity_H1_n4_4_3_p1}
     \end{subfigure}
     \hfill
    \begin{subfigure}[t]{0.22\textwidth}
    \centering
    \includegraphics[width=\textwidth]{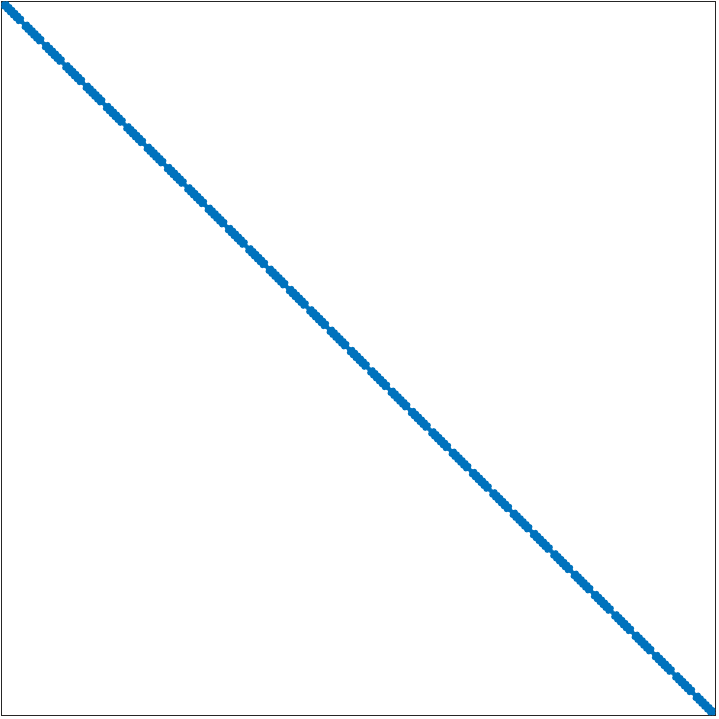}
    \caption{$\mathcal{H}_2$}
    \label{fig: 3D_Laplace_magnet_sparsity_H2_n4_4_3_p1}
     \end{subfigure}
     \hfill
    \begin{subfigure}[t]{0.22\textwidth}
    \centering
    \includegraphics[width=\textwidth]{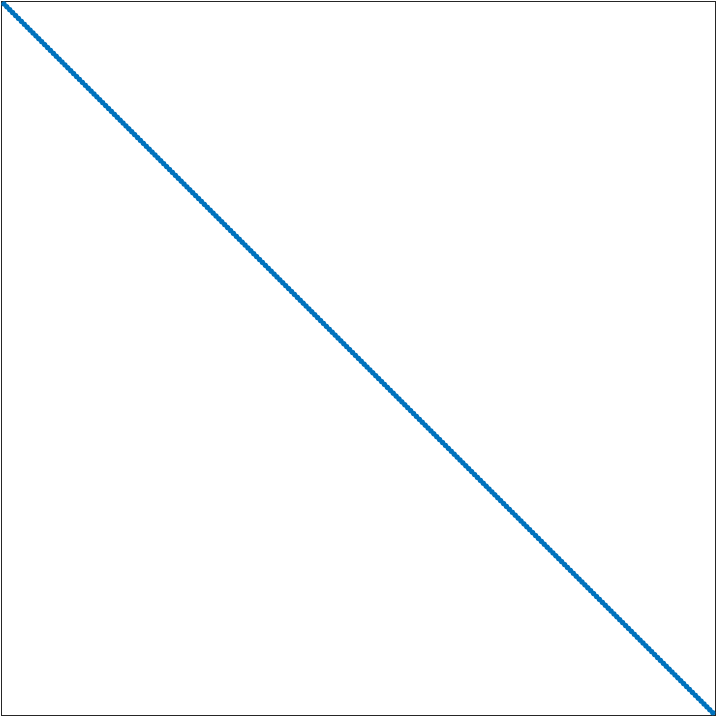}
    \caption{$\mathcal{H}_3$}
    \label{fig: 3D_Laplace_magnet_sparsity_H3_n4_4_3_p1}
     \end{subfigure}
     \hfill
    \caption{Sparsity patterns}
    \label{fig: sparsity_pattern_hierarchical_ML}
\end{figure}

\subsection{Multipatch mass lumping}
\label{se: multipatch_mass_lumping}
We recall that in the multipatch setting, the stiffness and mass matrices are expressed as
\begin{equation*}
    \mathcal{K}=\sum_{r=1}^{N_p} R_r^T\mathcal{K}_rR_r \quad \text{and} \quad \mathcal{M}=\sum_{r=1}^{N_p} R_r^T\mathcal{M}_rR_r
\end{equation*}
where $N_p$ is the number of patches, $\mathcal{K}_r$ and $\mathcal{M}_r$ are the local stiffness and mass matrices of the $r$th patch and $R_r$ maps its local degrees of freedom to global ones. Since $\mathcal{M}_r$ are single-patch mass matrices, it motivates the following definition of multipatch mass lumping.

\begin{definition}[Multipatch lumped matrices]
\label{def: multipatch_mass_lumping}
Let $\mathcal{B}=\sum_{r=1}^{N_p} R_r^T\mathcal{B}_rR_r$ be a multipatch matrix, where $\mathcal{B}_r \in \mathcal{S}_{\mathbf{n}}^+$ for all $r=1,\dots,N_p$. We define $\mathcal{P}_{\mathbf{i}}=\sum_{r=1}^{N_p} R_r^T\mathcal{P}_{r,i_r}R_r$ as a multipatch lumped matrix, where $\mathcal{P}_{r,i_r}$ is constructed from $\mathcal{B}_r$ following Definition \ref{def: block_lumped_matrices} and $\mathbf{i}=(i_1,\dots,i_{N_p})$ is a multi-index.
\end{definition}

For notational convenience, we will assume that the discretization parameters are identical for each patch such that we may choose $i_r=i$ for all patches $r=1,\dots,N_p$ and simply denote $\mathcal{P}_{i}$ the resulting multipatch lumped mass matrix. Although this notation conflicts with the single-patch case, it will always be clear from the context whether $\mathcal{P}_{i}$ refers to a single-patch or multipatch lumped mass matrix. The next lemma generalizes our previous findings to the multipatch case.

\begin{lemma}
\label{lem: multipatch_order}
Let $\mathcal{B}=\sum_{r=1}^{N_p} R_r^T\mathcal{B}_rR_r$, where $\mathcal{B}_r \in \mathcal{S}_{\mathbf{n}}^+$ for all $r=1,\dots,N_p$. Then the sequence of matrices $\{\mathcal{P}_{i}\}_{i=1}^{n_1}$ constructed from $\mathcal{B}$ following Definition \ref{def: multipatch_mass_lumping} satisfies
\begin{equation*}
    \mathcal{P}_1 \succeq \mathcal{P}_2 \succeq \dots \succeq \mathcal{P}_{n_1-1} \succeq \mathcal{P}_{n_1}=\mathcal{B}.
\end{equation*}
\end{lemma}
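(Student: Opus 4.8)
The plan is to lift the single-patch Loewner ordering from \eqref{eq: Loewner_sequence_single_patch} to the multipatch assembly by exploiting the fact that the congruence-type maps $\mathcal{B}_r \mapsto R_r^T \mathcal{B}_r R_r$ preserve the positive semidefinite ordering, and that a sum of positive semidefinite matrices is positive semidefinite. Concretely, the goal reduces to showing $\mathcal{P}_i \succeq \mathcal{P}_{i+1}$ for each $i=1,\dots,n_1-1$, together with the endpoint identification $\mathcal{P}_{n_1}=\mathcal{B}$.

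First I would fix an index $i$ and recall from Definition \ref{def: multipatch_mass_lumping} that $\mathcal{P}_i=\sum_{r=1}^{N_p} R_r^T \mathcal{P}_{r,i} R_r$, where each $\mathcal{P}_{r,i}$ is the single-patch block lumped matrix built from $\mathcal{B}_r\in\mathcal{S}_{\mathbf{n}}^+$ via Definition \ref{def: block_lumped_matrices}. The single-patch result \eqref{eq: Loewner_sequence_single_patch}, which follows from Theorem \ref{th: family_block_lumped_mass_prec} (itself resting on Lemma \ref{lem: block_positivity}), gives $\mathcal{P}_{r,i}\succeq \mathcal{P}_{r,i+1}$ for every patch $r$, i.e.\ $\mathcal{P}_{r,i}-\mathcal{P}_{r,i+1}\succeq 0$. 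Then for any global vector $\mathbf{x}$,
\begin{equation*}
    \mathbf{x}^T(\mathcal{P}_i-\mathcal{P}_{i+1})\mathbf{x}
    =\sum_{r=1}^{N_p}(R_r\mathbf{x})^T\bigl(\mathcal{P}_{r,i}-\mathcal{P}_{r,i+1}\bigr)(R_r\mathbf{x})\ge 0,
\end{equation*}
since each summand is a quadratic form in the local vector $R_r\mathbf{x}$ evaluated against a positive semidefinite local matrix. This establishes $\mathcal{P}_i\succeq\mathcal{P}_{i+1}$ and hence the full chain of inequalities.

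For the final equality I would invoke the single-patch endpoint identity $\mathcal{P}_{r,n_1}=\mathcal{B}_r$ (the $i=n_1$ case noted in Definition \ref{def: block_lumped_matrices}), so that $\mathcal{P}_{n_1}=\sum_{r=1}^{N_p} R_r^T\mathcal{B}_rR_r=\mathcal{B}$ by definition of the multipatch assembly. This completes the proof.

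The main subtlety, rather than a genuine obstacle, is the bookkeeping around the assumption that the discretization parameters coincide across patches so that a single index $i$ selects the $i$th lumped matrix on every patch; this is exactly the convention adopted after Definition \ref{def: multipatch_mass_lumping}, and it guarantees that each local $\mathcal{B}_r$ shares the same dimensions vector $\mathbf{n}$ and hence that $\mathcal{P}_{r,i}$ is well-defined for the common range $i=1,\dots,n_1$. Note that the local matrices need \emph{not} be mutually identical, nor must the $R_r$ be injective or orthogonal: the argument uses only that congruence by $R_r$ and summation both preserve the Loewner order, which is why the result holds verbatim for the mass matrix, where each $\mathcal{B}_r=\mathcal{M}_r\in\mathcal{S}_{\mathbf{n}}^+$ by Lemma \ref{lem: mass_properties}.
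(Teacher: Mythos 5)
Your proof is correct and follows essentially the same route as the paper: both reduce to the patchwise ordering $\mathcal{P}_{r,i}\succeq\mathcal{P}_{r,i+1}$ from \eqref{eq: Loewner_sequence_single_patch} and lift it through the assembly, the only cosmetic difference being that you verify the congruence step $A\succeq B\implies R_r^TAR_r\succeq R_r^TBR_r$ directly via the quadratic form, whereas the paper cites it from Horn and Johnson. Your explicit check of the endpoint $\mathcal{P}_{n_1}=\mathcal{B}$ is a welcome detail the paper leaves implicit.
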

\begin{proof}
First recall that for any symmetric matrices $A,B \in \mathbb{R}^{n \times n}$ and any $V \in \mathbb{R}^{n \times m}$, if $A \succeq B$, then $V^TAV \succeq V^TBV$ \citep[][Theorem 7.7.2(a)]{horn2012matrix}. The result then immediately follows since for any $1 \leq r \leq N_p$ and any index $1 \leq i < n_1$,
\begin{align*}
    \mathcal{P}_{r,i} &\succeq \mathcal{P}_{r,i+1}, & &\text{see \eqref{eq: Loewner_sequence_single_patch}}\\
    \implies R_r^T\mathcal{P}_{r,i}R_r &\succeq R_r^T\mathcal{P}_{r,i+1}R_r, & & \\
    \implies \mathcal{P}_i=\sum_{r=1}^{N_p} R_r^T\mathcal{P}_{r,i}R_r &\succeq \sum_{r=1}^{N_p} R_r^T\mathcal{P}_{r,i+1}R_r=\mathcal{P}_{i+1}. & &
\end{align*}
The statement then follows from repeated application of the previous relation.
\end{proof}
For high-dimensional problems, it might be useful to resort to hierarchical mass lumping techniques on the single-patch level, as described in Section \ref{se: hierarchical_mass_lumping}. There is an obvious analogue of Definition \ref{def: multipatch_mass_lumping} and Lemma \ref{lem: multipatch_order} for such cases.

The purpose of mass lumping is first and foremost to reduce the block bandedness of the mass matrix and guarantee a CFL condition that cannot be worse than the original one. However, generally speaking, mass lumping does not significantly improve the CFL while it might undermine the accuracy. Multipatch problems are not exempt and the issue already originates on the single-patch level, before local matrices are merged into the global one. This merging process is identical to the assembly procedure of classical finite element methods, which is not surprising given the analogy between patches and elements. Thus, the proof of the following lemma is analogous to the standard finite element case (see e.g. \citep{irons1971bound, hughes1979implicit, wathen1987realistic}).

\begin{lemma}
\label{lem: bounds_generalized_eig}
Let $\mathcal{A}=\sum_{r=1}^{N_p} R_r^T\mathcal{A}_rR_r$ and $\mathcal{B}=\sum_{r=1}^{N_p} R_r^T\mathcal{B}_rR_r$, where $\mathcal{A}_r \in \mathcal{S}_n$ and $\mathcal{B}_r \in \mathcal{S}_n^+$ for all $r=1,\dots,N_p$. Then,
\begin{equation*}
    \min_r \lambda_{\min}(\mathcal{A}_r,\mathcal{B}_r) \leq \lambda_{\min}(\mathcal{A},\mathcal{B}), \qquad \lambda_{\max}(\mathcal{A},\mathcal{B}) \leq \max_r \lambda_{\max}(\mathcal{A}_r,\mathcal{B}_r).
\end{equation*}
\end{lemma}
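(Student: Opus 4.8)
The statement to prove bounds the extreme generalized eigenvalues of the assembled pair $(\mathcal{A},\mathcal{B})$ by the extreme generalized eigenvalues of the local pairs $(\mathcal{A}_r,\mathcal{B}_r)$. The natural tool is the Rayleigh quotient characterization $\lambda_{\min}(\mathcal{A},\mathcal{B})=\min_{\mathbf{x}\neq 0}\frac{\mathbf{x}^T\mathcal{A}\mathbf{x}}{\mathbf{x}^T\mathcal{B}\mathbf{x}}$ and the analogous $\max$ formula, which is valid since $\mathcal{B}\succ 0$ (as a sum of $R_r^T\mathcal{B}_rR_r$ with each $\mathcal{B}_r\succ 0$, assuming the $R_r$ jointly cover all global degrees of freedom). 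The plan is to expand both quadratic forms through the assembly operators, writing $\mathbf{x}^T\mathcal{A}\mathbf{x}=\sum_{r=1}^{N_p}(R_r\mathbf{x})^T\mathcal{A}_r(R_r\mathbf{x})$ and likewise for $\mathcal{B}$, and then apply the scalar inequality $\min_r\alpha_r\leq\frac{\sum_r a_r}{\sum_r b_r}\leq\max_r\alpha_r$ that holds for the mediant of ratios $\alpha_r=a_r/b_r$ with positive denominators $b_r$.

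First I would set $\mathbf{y}_r:=R_r\mathbf{x}$ for a fixed global vector $\mathbf{x}$, so that the assembled Rayleigh quotient becomes a ratio of sums of local quadratic forms. For each $r$ with $\mathbf{y}_r\neq 0$, the local Rayleigh quotient satisfies $\lambda_{\min}(\mathcal{A}_r,\mathcal{B}_r)\leq\frac{\mathbf{y}_r^T\mathcal{A}_r\mathbf{y}_r}{\mathbf{y}_r^T\mathcal{B}_r\mathbf{y}_r}\leq\lambda_{\max}(\mathcal{A}_r,\mathcal{B}_r)$, which rearranges to the two-sided bound $\lambda_{\min}(\mathcal{A}_r,\mathcal{B}_r)\,(\mathbf{y}_r^T\mathcal{B}_r\mathbf{y}_r)\leq\mathbf{y}_r^T\mathcal{A}_r\mathbf{y}_r\leq\lambda_{\max}(\mathcal{A}_r,\mathcal{B}_r)\,(\mathbf{y}_r^T\mathcal{B}_r\mathbf{y}_r)$. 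This last form is the one I would actually use, because it remains valid (trivially, as $0\leq 0$) even when $\mathbf{y}_r=0$, sidestepping any division-by-zero issue when a global vector happens to vanish on some patch. Summing over $r$ and bounding each $\lambda_{\min}(\mathcal{A}_r,\mathcal{B}_r)$ below by $\min_r\lambda_{\min}(\mathcal{A}_r,\mathcal{B}_r)$ (respectively each $\lambda_{\max}$ above by $\max_r\lambda_{\max}$) yields
\begin{equation*}
    \Big(\min_r\lambda_{\min}(\mathcal{A}_r,\mathcal{B}_r)\Big)\,\mathbf{x}^T\mathcal{B}\mathbf{x}\leq\mathbf{x}^T\mathcal{A}\mathbf{x}\leq\Big(\max_r\lambda_{\max}(\mathcal{A}_r,\mathcal{B}_r)\Big)\,\mathbf{x}^T\mathcal{B}\mathbf{x}.
\end{equation*}
Dividing by $\mathbf{x}^T\mathcal{B}\mathbf{x}>0$ and then minimizing (respectively maximizing) the resulting Rayleigh quotient over all nonzero $\mathbf{x}$ delivers exactly the two claimed inequalities.

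The only genuine subtlety, and the step I would watch most carefully, is the handling of patches on which $\mathbf{y}_r=R_r\mathbf{x}=0$: one must ensure the local eigenvalue bounds are phrased multiplicatively (as above) rather than as ratios, so that empty or vanishing contributions do not corrupt the inequality. A secondary point worth a sentence is that each local quadratic form enters with a \emph{nonnegative} denominator $\mathbf{y}_r^T\mathcal{B}_r\mathbf{y}_r\geq 0$ (strictly positive when $\mathbf{y}_r\neq 0$, since $\mathcal{B}_r\succ 0$), which is precisely what licenses summing the multiplicative inequalities termwise and preserves the direction of both bounds. Everything else is the routine mediant-inequality bookkeeping, so I would keep the write-up short and lean on the Rayleigh quotient characterization together with the assembly identity for the quadratic forms.
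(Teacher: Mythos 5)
Your proof is correct and is essentially the argument the paper relies on: the paper does not spell out a proof but declares it ``analogous to the standard finite element case'' \citep{irons1971bound, hughes1979implicit, wathen1987realistic}, and the classical element-eigenvalue bound theorem in those references is exactly your Rayleigh-quotient argument based on the assembly identity $\mathbf{x}^T\mathcal{A}\mathbf{x}=\sum_{r}(R_r\mathbf{x})^T\mathcal{A}_r(R_r\mathbf{x})$ combined with the mediant-type inequality. Your multiplicative phrasing of the local bounds, which stays valid when $R_r\mathbf{x}=0$ on some patch, together with the explicit remark that $\mathcal{B}\succ 0$ requires the $R_r$ to jointly cover all global degrees of freedom, addresses the only delicate points in that standard argument.
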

The upper bound of Lemma \ref{lem: bounds_generalized_eig} may be quite tight and is an incentive for acting on the single-patch matrices prior to assembly.

\subsection{Solving linear systems with the lumped mass matrix}
\label{se: linear_system_solvers}
In the single-patch case, linear systems with the lumped mass matrices are conveniently solved using sparse Cholesky factorizations; i.e. $P=LL^T$, where $L$ is a lower triangular matrix. It is well-known that the bandwidth of the Cholesky factor $L$ cannot be larger than the bandwidth of $P$. For this reason, many techniques for minimizing the fill-in actually minimize the bandwidth of a permuted matrix. The mass lumping techniques discussed in this work reduce the bandwidth of the consistent mass, thereby significantly accelerating sparse direct solvers. Indeed, a direct application of Lemma \ref{lem: bandwidth} shows that the bandwidth of $\mathcal{H}_k$ is
\begin{equation*}
    (0,\dots,0,b_{k+1},\dots,b_d) \cdot (r_1,\dots,r_k,r_{k+1},\dots,r_d) = \sum_{i=k+1}^d b_ir_i.
\end{equation*}
In comparison to the bandwidth of $\mathcal{B}$, the bandwidth of $\mathcal{H}_k$ suppresses the first $k$ largest contributors to the sum. For multi-dimensional problems, it is a compelling argument for first reducing the bandwidth at the top of the hierarchy and then working our way downward.

While solving linear systems with the lumped mass matrix in the single-patch case is relatively straightforward, the multipatch case deserves some more explanations. The multipatch lumped mass matrix (after potentially a symmetric permutation) is a generalized saddle point matrix \cite{benzi2005numerical}, expressed as 
\begin{equation*}
    \mathcal{P}=
    \begin{pmatrix}
        D & C \\
        C^T & X
    \end{pmatrix}
    \quad \text{with} \quad D=\diag(D_1,\dots,D_{N_p}).
\end{equation*}
We consider the linear system
\begin{equation*}
    \begin{pmatrix}
        D & C \\
        C^T & X
    \end{pmatrix}
    \begin{pmatrix}
        \mathbf{x} \\
        \mathbf{y}
    \end{pmatrix}
    =
    \begin{pmatrix}
        \mathbf{f} \\
        \mathbf{g}
    \end{pmatrix}.
\end{equation*}
After block Gaussian elimination, we solve the upper block triangular system
\begin{equation}
\label{eq: upper_triangular_system}
    \begin{pmatrix}
        D & C \\
        0 & S
    \end{pmatrix}
    \begin{pmatrix}
        \mathbf{x} \\
        \mathbf{y}
    \end{pmatrix}
    =
    \begin{pmatrix}
        \mathbf{f} \\
        \tilde{\mathbf{g}}
    \end{pmatrix}
\end{equation}
where $S=X-C^TD^{-1}C$ is the Schur complement of $D$ in $\mathcal{P}$ and $\tilde{\mathbf{g}}=\mathbf{g}-C^TD^{-1}\mathbf{f}$. Contrary to the consistent mass matrix (which features a similar structure), the Schur complement of the lumped mass matrix can be formed explicitly and cheaply owing to its sparsity and simple block diagonal structure. Once the Schur complement is formed, which is done once and for all, \eqref{eq: upper_triangular_system} can be solved by backward substitution. This strategy is advantageous given that within time stepping schemes, one must solve a \emph{sequence} of linear systems and not just a single one.

\section{Outlier removal}
\label{se: outlier_removal}
\subsection{Deflation techniques}
\label{se: deflation}
Mass lumping generally mitigates but does not completely eliminate outlier frequencies from the spectrum. Thus, it is usually combined with dedicated outlier removal techniques. Unfortunately, the methods described in \citep{sande2019sharp, deng2021boundary, manni2022application, hiemstra2021removal} are only applicable to highly structured problems rarely met in practical applications. Moreover, numerical experiments show that predefined penalization terms barely help remove outliers and must instead be tailored to the specific problem at hand. For this reason, we present in this section an algebraic outlier removal technique based on low-rank perturbations that is robust with respect to the geometry. Our strategy consists in deflating the spectrum from its largest eigenvalues, while preserving the smallest ones. Of course, this choice of scaling assumes that the dynamics are completely resolved by the low-frequency part of the spectrum, which is often (nearly) the case. We first recall some preliminary results, providing the theoretical foundation of the method.

\begin{lemma}[{\citep[][Theorem VI.1.15]{stewart1990matrix}}]
\label{lem: classical_problem}
Let $A,B \in \mathbb{R}^{n \times n}$ be symmetric matrices with $B$ positive definite. Then, all generalized eigenvalues of $(A,B)$ are real and there exists an invertible matrix $U \in \mathbb{R}^{n \times n}$ such that
\begin{equation*}
    U^TAU=D, \qquad U^TBU=I,
\end{equation*}
where $D=\diag(\lambda_1, \dots, \lambda_n)$ is a real diagonal matrix containing the eigenvalues.
\end{lemma}

\begin{definition}[Scaled matrix pencil]
\label{def: deflated_pencil}
Let $A,B \in \mathbb{R}^{n \times n}$ be symmetric matrices with $B$ positive definite and $f$, $g$ be two functions defined on the spectrum of $(A,B)$. The scaled pencil $(\bar{A},\bar{B})$ is defined as
\begin{align*}
    \bar{A} &= A+Vf(D_2)V^T, \\
    \bar{B} &= B+Vg(D_2)V^T,
\end{align*}
where $V=BU_2 \in \mathbb{R}^{n \times r}$, with $U_2=[\mathbf{u}_{n-r+1}, \dots, \mathbf{u}_n]$ the matrix formed by the last $r$ $B$-orthonormal eigenvectors of $(A,B)$ and $D_2=\diag(\lambda_{n-r+1}, \dots, \lambda_n) \in \mathbb{R}^{r \times r}$ the diagonal matrix formed by the last $r$ eigenvalues with $r \ll n$.    
\end{definition}

The next theorem shows that this definition provides the desired scaling.

\begin{theorem}[Deflation of matrix pencils]
\label{th: low_rank_pert_AB}
Let $A,B \in \mathbb{R}^{n \times n}$ be symmetric matrices with $B$ positive definite and $(\bar{A},\bar{B})$ be the scaled pencil introduced in Definition \ref{def: deflated_pencil}. Then,
\begin{itemize}[noitemsep]
    \item The eigenvectors of $(A,B)$ and $(\bar{A},\bar{B})$ are the same.
    \item The eigenvalues of $(\bar{A},\bar{B})$ are given by:
    \begin{equation*}
    \bar{\lambda}_{i_k}=
    \begin{cases}
    \lambda_k & \text{ for } k=1,\dots,n-r, \\
    \frac{\lambda_k+f(\lambda_k)}{1+g(\lambda_k)} & \text{ for } k=n-r+1,\dots,n.
    \end{cases}
    \end{equation*}
\end{itemize}
\end{theorem}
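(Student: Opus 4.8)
The plan is to diagonalize the original pencil first and then track what the low-rank perturbation does to each eigenvector individually. By Lemma \ref{lem: classical_problem} there is an invertible $U=[\mathbf{u}_1,\dots,\mathbf{u}_n]$ with $U^TAU=D$ and $U^TBU=I$; equivalently, the columns satisfy the generalized eigenvalue equation $A\mathbf{u}_j=\lambda_j B\mathbf{u}_j$ and are $B$-orthonormal, $\mathbf{u}_i^TB\mathbf{u}_j=\delta_{ij}$. Since $U$ is invertible its columns form a basis of $\mathbb{R}^n$, so it suffices to check that every $\mathbf{u}_j$ remains an eigenvector of $(\bar A,\bar B)$ and to read off the corresponding eigenvalue; this simultaneously settles both bullet points.

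The key computation is the action of $V=BU_2$ on the eigenvectors. First I would observe that $V^T\mathbf{u}_j=U_2^TB\mathbf{u}_j$, which by $B$-orthonormality vanishes whenever $j\leq n-r$ and equals the standard basis vector $\mathbf{e}_{j-(n-r)}\in\mathbb{R}^r$ whenever $j>n-r$. The choice $V=BU_2$ rather than $U_2$ is precisely what turns this inner product into the $B$-inner product and thereby makes it clean. Dually, $V\mathbf{e}_m=BU_2\mathbf{e}_m=B\mathbf{u}_{n-r+m}$, so $V$ maps standard basis vectors back to $B$-images of the deflated eigenvectors.

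With these two identities in hand the proof splits into two cases. For $j\leq n-r$ both perturbation terms annihilate $\mathbf{u}_j$ because $V^T\mathbf{u}_j=\mathbf{0}$, so $\bar A\mathbf{u}_j=A\mathbf{u}_j=\lambda_j B\mathbf{u}_j=\lambda_j\bar B\mathbf{u}_j$ and the eigenpair is untouched, giving $\bar\lambda=\lambda_j$. For $j>n-r$, writing $m=j-(n-r)$, the diagonality of $D_2$ gives $f(D_2)V^T\mathbf{u}_j=f(\lambda_j)\mathbf{e}_m$, whence $Vf(D_2)V^T\mathbf{u}_j=f(\lambda_j)B\mathbf{u}_j$ by the dual identity; therefore $\bar A\mathbf{u}_j=(\lambda_j+f(\lambda_j))B\mathbf{u}_j$ and analogously $\bar B\mathbf{u}_j=(1+g(\lambda_j))B\mathbf{u}_j$. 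Dividing, under the implicit assumption $1+g(\lambda_j)\neq 0$, yields $\bar A\mathbf{u}_j=\frac{\lambda_j+f(\lambda_j)}{1+g(\lambda_j)}\bar B\mathbf{u}_j$, matching the claimed formula.

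I expect the only genuine subtlety to be the index bookkeeping, namely correctly identifying which column of $V^T$ is selected and confirming $V\mathbf{e}_m=B\mathbf{u}_j$, together with noting that the relabeling $i_k$ in the statement merely records that the deflated eigenvalues may reorder within the spectrum even though the eigenvectors stay fixed. Everything else is a direct verification, so there is no real analytic obstacle once the $B$-orthogonality of $V$ against the undeflated eigenvectors is established.
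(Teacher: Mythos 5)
Your proof is correct, and it reaches the conclusion by a slightly more elementary route than the paper. Both arguments rest on the same foundation, namely the $B$-orthonormal diagonalization of Lemma \ref{lem: classical_problem}, but they are organized differently. The paper rewrites the perturbations in terms of the full eigenvector matrix, $\bar{A}=A+BU\diag(0,f(D_2))U^TB$ and $\bar{B}=B+BU\diag(0,g(D_2))U^TB$, asserts (without computation) that each $\mathbf{u}_i$ remains an eigenvector, and then obtains the eigenvalues at the matrix level: congruence by $U$ preserves the spectrum of the pencil, so $\Lambda(\bar{A},\bar{B})=\Lambda\bigl(D+\diag(0,f(D_2)),\,I+\diag(0,g(D_2))\bigr)$, a diagonal pencil whose eigenvalue ratios are read off directly. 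You instead work vector by vector: the identities $V^T\mathbf{u}_j=U_2^TB\mathbf{u}_j\in\{\mathbf{0},\mathbf{e}_{j-(n-r)}\}$ and $V\mathbf{e}_m=B\mathbf{u}_{n-r+m}$ are exactly the computation the paper calls ``straightforward,'' and you then extract each eigenvalue from $\bar{A}\mathbf{u}_j=(\lambda_j+f(\lambda_j))B\mathbf{u}_j$ and $\bar{B}\mathbf{u}_j=(1+g(\lambda_j))B\mathbf{u}_j$ without ever invoking the congruence-invariance of pencil spectra (the paper's citation to Parlett). What your route buys is explicitness and a lighter toolkit, plus you surface the genuinely implicit hypothesis $1+g(\lambda_j)\neq 0$ (equivalently, invertibility of $\bar{B}$), which the paper also needs but never states; note that $g(\lambda)=\lambda/\lambda_{n-r}-1$, one of the paper's recommended choices, satisfies it since $\lambda_j>0$ in that setting. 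What the paper's route buys is automatic completeness: the set identity for $\Lambda(\bar{A},\bar{B})$ shows at once that these $n$ values exhaust the spectrum, whereas your argument needs the remark that the $\mathbf{u}_j$ form a basis (which you do make, since $U$ is invertible) to rule out further eigenvalues.
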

\begin{proof}
We first note that the matrices $\bar{A}$ and $\bar{B}$ can be rewritten as
\begin{align*}
    \bar{A} &= A+BU\diag(0, f(D_2))U^TB, \\
    \bar{B} &= B+BU\diag(0, g(D_2))U^TB,
\end{align*}
where $\diag(0, f(D_2)), \diag(0, g(D_2)) \in \mathbb{R}^{n \times n}$ are the block diagonal matrices obtained by appending zeros to $f(D_2)$ and $g(D_2)$, respectively, and $U$ is the matrix of eigenvectors. Verifying that $\mathbf{u}_i$ is an eigenvector of $(\bar{A},\bar{B})$ for $i=1,\dots,n$ is straightforward. Moreover, since the matrix pencils $(\bar{A},\bar{B})$ and $(U^T\bar{A}U, U^T\bar{B}U)$ are equivalent \citep[][Chapter 15]{parlett1998symmetric},
\begin{align*}
\Lambda(\bar{A},\bar{B})&=\Lambda(U^T\bar{A}U, U^T\bar{B}U) \\
&=\Lambda(D+\diag(0, f(D_2)),I+\diag(0, g(D_2))) \\
&=\{\lambda_k\}_{k=1}^{n-r} \cup \left\{\frac{\lambda_k+f(\lambda_k)}{1+g(\lambda_k)}\right\}_{k=n-r+1}^n,
\end{align*}
where the second equality follows from Lemma \ref{lem: classical_problem}.
\end{proof}

\begin{remark}
In numerical linear algebra, \emph{deflation} refers to the removal of unwanted eigenvalues. The result of Theorem \ref{th: low_rank_pert_AB} is analogous to deflation ``by substraction'', which originated from the early work of Hotelling \cite{hotelling1943some}. The reader may refer to \citep{parlett1998symmetric, saad2011numerical} for an overview of deflation techniques.
\end{remark}

The previous theorem allows to map the largest eigenvalues of $(A,B)$ to virtually any real number. However, the transformation must be carefully chosen such that it does not reduce too much the outlier frequencies. Indeed, since the eigenvectors are not affected by the transformation, if outlier frequencies are mapped to low frequencies, their spurious eigenvectors will artificially enter the solution, which might have disastrous consequences for the dynamics. We give below some suitable choices for the functions $f$ and $g$ that avoid this issue.

\begin{enumerate}[noitemsep]
    \item Set $f(\lambda)=\lambda_{n-r}-\lambda$ and $g(\lambda)=0$. 
    \item Set $f(\lambda)=0$ and $g(\lambda)=\frac{\lambda}{\lambda_{n-r}}-1$.
    \item More generally, choose any function $g(\lambda)$ (defined on the spectrum of $(A,B)$) and set $f(\lambda)=\lambda_{n-r}(1+g(\lambda))-\lambda$.
\end{enumerate}
In the first case, a negative semidefinite perturbation is added to the stiffness matrix while in the second case, a positive semidefinite perturbation is added to the mass matrix. The latter has already been proposed in \cite{tkachuk2014local,gonzalez2020large} as a mass scaling strategy (referred therein as \emph{spectral scaling} and \emph{mass tailoring}, respectively). The nature of the perturbation in the third case depends on the specific choice of functions. In the remaining part of the article, we will confine ourselves to the choices of $f$ and $g$ listed above, which all lead to the same transformed eigenvalues, given by
\begin{equation*}
    \bar{\lambda}_{k}=
    \begin{cases}
    \lambda_k & \text{ for } k=1,\dots,n-r, \\
    \lambda_{n-r} & \text{ for } k=n-r+1,\dots,n.
    \end{cases}
\end{equation*}

The result, graphically illustrated in Figure \ref{fig: truncation}, consists in shaving off the upper part of the spectrum. Note that in our context $\lambda_{n-r}$ is the largest ``regular'' (or non-outlier) eigenvalue. In principle, it could be replaced with a cutoff value, as suggested in \cite{tkachuk2014local,gonzalez2020large}, to avoid computing an additional eigenvalue. However, choosing $\lambda_{n-r}$ preserves the eigenvalue numbers and prevents spurious eigenfunctions from moving to the lower to intermediate frequency range. Thus, we prefer computing this additional eigenvalue, which in practice barely introduces any overhead.
\begin{figure}[htbp]
    \centering
    \includegraphics[scale=0.4]{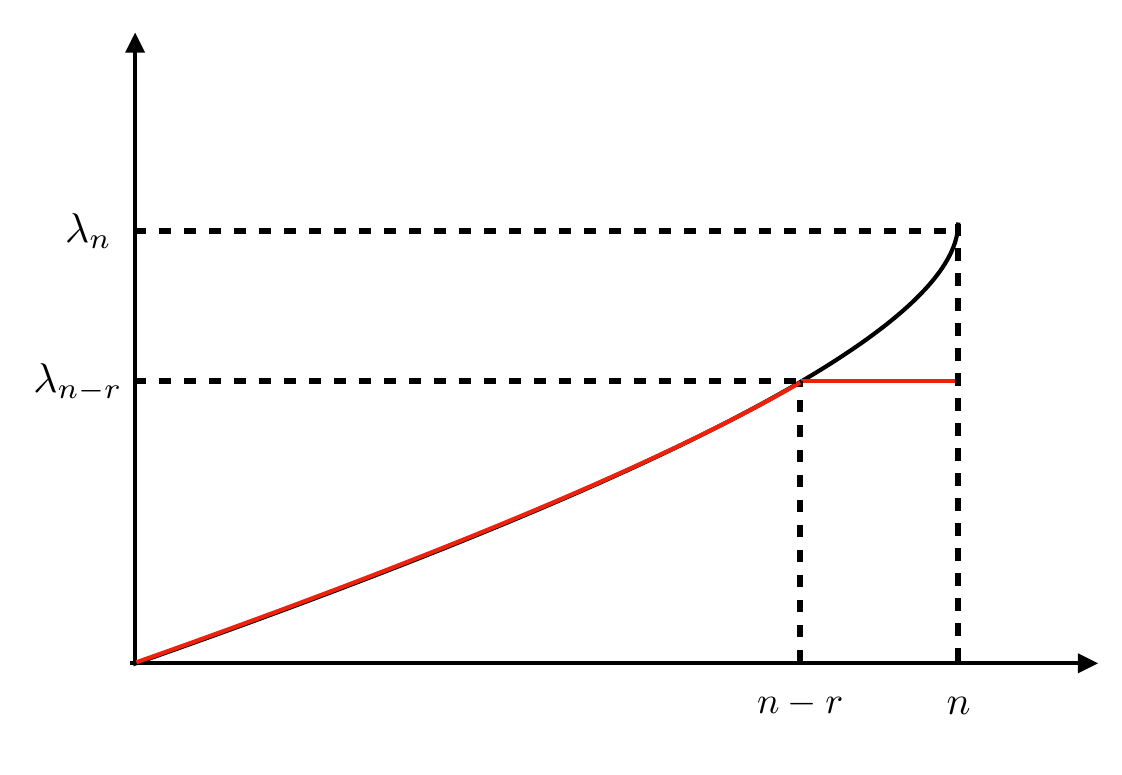}
    \caption{Truncation of the largest eigenvalues}
    \label{fig: truncation}
\end{figure}
Clearly, once the $r+1$ largest eigenpairs have been computed, the increase in critical time step is known. For instance, for the central difference method in the undamped case (see \eqref{eq: CFL_central_difference})
\begin{equation*}
    \frac{\bar{\Delta t_c}}{\Delta t_c}=\sqrt{\frac{\lambda_n}{\lambda_{n-r}}}.
\end{equation*}
 On the one hand, Theorem \ref{th: low_rank_pert_AB} circumvents the lack of robustness from predefined perturbations terms. On the other hand, the cost for its implementation is also much higher since it involves a few eigenpairs, which must be computed on a case-by-case basis. We first focus on the rationale of our method and discuss its computational cost more thoroughly in Section \ref{se: eigenvalue_comp}.

As we have seen, we may deflate the spectrum by either perturbing the mass, stiffness or both. In order to align ourselves with common practice, which tends to only modify the mass matrix, we propose a two-step lumping-scaling strategy: we first approximate the mass matrix with one of the lumping strategies proposed in Section \ref{se: mass_lumping} (or, alternatively, any suitable ad hoc mass lumping technique) and then scale the lumped mass matrix to remove persistent outliers. Note that the scaled mass matrix is generally completely dense and must obviously never be formed explicitly (in contrast to the method proposed in \cite{tkachuk2014local}). Instead, it is represented implicitly by only storing the lumped mass matrix and the terms $V$ and $g(D_2)$ defining the low-rank perturbation. Moreover, since the perturbation is low-rank, the scaled mass matrix may be easily inverted thanks to the Woodbury matrix identity \cite{woodbury1950inverting} leading to
\begin{equation}
\label{eq: woodbury_identity}
    \bar{B}^{-1}=B^{-1}-U_2(g(D_2)^{-1}+I_r)^{-1}U_2^T.
\end{equation}
Since $g(D_2)$ is diagonal (and nonsingular), $(g(D_2)^{-1}+I_r)^{-1}$ can be formed explicitly. Thus, solving a linear system with the scaled mass matrix only requires solving a linear system with the lumped mass matrix and computing a matrix-vector multiplication with a low-rank matrix. While the former uses standard techniques, as described in Section \ref{se: linear_system_solvers}, the latter only requires $O(rn)$ additional flops. Thus, if the perturbation's rank is relatively small, these matrix-vector multiplications do not introduce any significant overhead. Due to repeated matrix-vector multiplications with the stiffness matrix in the time stepping scheme, the cost incurred by instead scaling the stiffness matrix is the same and eventually the choice is just a matter of taste. Although stability concerns have been raised for the Woodbury identity \cite{yip1986note}, instabilities were never experienced during our tests. The method described herein is very general and could even be beneficial for practitioners using the consistent mass. Indeed, the scaling does not affect the smallest eigenpairs and therefore preserves the higher order convergence characterizing the consistent mass. However, the eigenpairs must then be computed to high precision, which is often not necessary when using a lumped mass whose small frequencies are only second order accurate.

Nevertheless, the strategy may seem rather impractical given that it requires explicit knowledge of the outlier eigenvalues and associated eigenvectors, whose number grows under mesh refinement \citep{hiemstra2021removal}. Surprisingly, this aspect was completely neglected in earlier works \cite{tkachuk2014local, gonzalez2020large}. Several arguments underpin our strategy. Firstly, in contract to classical $C^0$ finite elements for which similar methods were proposed, maximally smooth $C^{p-1}$ spline discretizations feature far fewer outlier eigenvalues, as outlined in the Appendix. Secondly, although practitioners often rule out (approximate) eigenvalue computations as prohibitively expensive, as we will discuss in the next subsection, the workload for computing a few of the largest eigenpairs with the Lanczos method is very similar to performing a few iterations of an explicit algorithm for dynamical simulations. Thus, it might be worthwhile spending a few iterations to remove outliers if we might save up on hundreds of iterations later on, especially for long-time simulations. The Lanczos method is the state-of-the-art solver for sparse symmetric generalized eigenvalue problems. It is briefly summarized in the next section to support our argument.

\subsection{Eigenvalue and eigenvector computations}
\label{se: eigenvalue_comp}
The Lanczos method for generalized eigenproblems can be derived from the one for standard eigenproblems, after transforming the generalized eigenproblem to standard form; e.g. via the Cholesky factorization of $B$. However, further transformations are needed for computational efficiency and numerical stability. A basic version is presented in Algorithm \ref{algo: Lanczos}.

\begin{algorithm}[ht]
\begin{algorithmic}[1]
\caption{Lanczos method \citep[][Algorithm 9.1]{saad2011numerical}}
\label{algo: Lanczos}
\Statex \textbf{Input}: Symmetric matrix pair $(A,B)$ with $B$ positive definite, starting vector $\mathbf{b}$, number of iterations $m$
\Statex \textbf{Output}: $m$ approximate eigenpairs of $(A,B)$
\State Set $\mathbf{v}_0=0$, $\mathbf{v}_1=\mathbf{b}/\|\mathbf{b}\|_B$, $\beta_1=0$
\For{$j=1,2,\cdots, m$}
    \State $\mathbf{v}=A\mathbf{v}_j$
    \State $\alpha_j = (\mathbf{v}, \mathbf{v}_j)$
    \State $\mathbf{w}=B^{-1}\mathbf{v}-\alpha_j \mathbf{v}_j-\beta_j\mathbf{v}_{j-1}$
    \State $\beta_{j+1}=\sqrt{(\mathbf{v}, \mathbf{w})}$
    \State $\mathbf{v}_{j+1}=\mathbf{w}/\beta_{j+1}$
\EndFor
\end{algorithmic}
\end{algorithm}

The vectors $\mathbf{v}_j$ computed during the course of the iterations serve to build a $B$-orthonormal basis for a Krylov subspace from which eigenvalue and eigenvector approximations are extracted. The interested reader may refer to standard textbooks detailing this procedure; e.g. \citep{parlett1998symmetric, saad2011numerical}.

\begin{remark}
Due to the propagation of round-off errors, a reorthogonalization procedure is necessary to restore, at least occasionally, the $B$-orthonormality of the Krylov basis. Implementations of the Lanczos method are further supplemented with restarting procedures that truncate the Krylov basis to avoid its prohibitive growth and potential storage issues. Common guidelines recommend using a basis size of $m=2k$, where $k$ is the number of desired eigenpairs \cite{stewart2002krylov}. Finally, convergence checks are also implemented to serve as stopping criterion. We have voluntarily left aside all those advanced topics to focus on the essential. The interested reader may refer to the extensive literature for a detailed discussion; e.g. \citep{stewart2002krylov, wu2000thick, parlett1998symmetric, saad2011numerical}.
\end{remark}

Although we have implemented an eigensolver from scratch for the sake of writing this paper, it is absolutely not necessary for applying the techniques presented herein. Several efficient implementations are available in software packages and all the user has to worry about is supplying an algorithm for computing matrix-vector multiplications with the (implicitly defined) stiffness matrix and solving linear systems with the (lumped) mass matrix, which depends on the nature of the problem (e.g. single-patch, multipatch,...). These two operations are reminiscent of the central difference method and as a matter of fact, if the number of iterations $m$ remains relatively small, which is ensured through restarting procedures, each iteration of the Lanczos method costs nearly as much as an iteration of the central difference method. Moreover, the Lanczos method is known to converge very fast to eigenvalues that are well separated from the rest of the spectrum \citep{saad1980rates, parlett1998symmetric, saad2011numerical}, which is precisely a distinctive feature of outliers. However, the time span of the simulation must be sufficiently large to amortize the cost of computing outlier eigenpairs. In general, the shorter the simulation, the smaller the number of outliers we can afford computing. In practice, their number is further limited by computer resources. For very large problems, simply storing all outlier eigenvectors is plainly infeasible. Following common guidelines relating the size of the Krylov subspace to the number of eigenpairs \cite{stewart2002krylov}, if one can afford storing $2k$ vectors, then one might compute roughly $k$ eigenpairs. Finally, although our method scales down outlier frequencies, it does not remove the corresponding outlier modes that might negatively impact the solution.

The deflation procedure proposed in this section is very general and can in principle be applied to any type of problem, including nontrivial (multipatch) geometries. In the multipatch setting, it is possible to locally scale the single-patch system matrices, similarly to local (elementwise) mass scaling techniques \cite{tkachuk2014local, eisentrager2024eigenvalue}. Indeed, Lemma \ref{lem: bounds_generalized_eig} reveals that the largest eigenvalues of  $(\mathcal{K},\mathcal{P}_i)$ could be controlled by the largest eigenvalues of $(\mathcal{K}_r,\mathcal{P}_{r,i})$ and suggests a scaling strategy directly targeting the origin of the issue: at the patch level. While this strategy is generally cheaper than scaling $(\mathcal{K},\mathcal{P}_i)$ globally, it has three shortcomings: firstly, the assembly into global matrices generally affects the smallest eigenvalues; secondly it cannot remove outliers introduced by the $C^0$ coupling of patch interfaces and finally, since the inverse of the global mass matrix \emph{is not} given by the assembly of the local inverses, we cannot directly apply \eqref{eq: woodbury_identity} locally. To resolve the third issue, we suggest locally scaling the stiffness matrix instead. Denoting $\bar{\mathcal{K}}_r$ the locally scaled stiffness matrix and recalling that the perturbation is negative semidefinite (see Section \ref{se: deflation}), $\bar{\mathcal{K}}_r \preceq \mathcal{K}_r$ and
\begin{equation*}
\bar{\mathcal{K}}:=\sum_{r=1}^{N_p} R_r^T\bar{\mathcal{K}}_rR_r \preceq \sum_{r=1}^{N_p} R_r^T\mathcal{K}_rR_r = \mathcal{K}.
\end{equation*}
Consequently, $\lambda_k(\bar{\mathcal{K}},\mathcal{P}_i) \leq \lambda_k(\mathcal{K},\mathcal{P}_i) \leq \lambda_k(\mathcal{K},\mathcal{M})$. Thus, our strategy essentially boils down to computing a few of the largest eigenpairs of a sequence of generalized eigenproblems on single patches, which is also well suited for parallel computations. We will better assess the numerical properties of this method in Section \ref{se: numerical_experiments}.

\begin{remark}
For some special cases it is yet far more advantageous to exploit the structure of the problem. In particular, for problems featuring a Kronecker product structure, all outliers can be removed by separately scaling the 1D factor matrices. For maximally smooth spline discretizations of 1D problems, the number of outliers only depends on the spline order, differential operator and type of boundary conditions \cite{hiemstra2021removal, manni2022application}. Consequently, the number of eigenvalues computed scales \emph{linearly} with the dimension and does not depend on the mesh size. For instance, based on the upper bounds provided in \cite{manni2022application}, for a uniform $C^{p-1}$ discretization of the Laplace on the hypercube $(0,1)^d$ with homogeneous Dirichlet boundary conditions, at most $dp$ eigenpairs are required to remove $O(n^{d-1}(p-1))$ outliers, where $n$ is the dimension of the univariate spline space \cite{hiemstra2021removal}. This method also offers some advantages over the ones presented in \citep{hiemstra2021removal, manni2022application}. Firstly, its implementation is straightforward: it does not require a change of basis, the standard B-spline basis is sufficient. Secondly, our strategy only relies on algebraic properties and not geometric ones, contrary to the method presented in \cite{hiemstra2021removal}, which generally stumbles on domains with curved boundaries, even if the mass matrix is a Kronecker product.  
\end{remark}

\section{Numerical experiments}
\label{se: numerical_experiments}
This section gathers a few numerical experiments designed to verify our theoretical results and demonstrate the usefulness of our strategies in the context of explicit dynamics. All experiments in this section are done using GeoPDEs \cite{vazquez2016new}, an open source Matlab/Octave software package for isogeometric analysis. 

\subsection{Single-patch geometries}
\begin{example}
We consider a cubic discretization of the 2D Laplace on two nontrivial single-patch domains shown in Figures \ref{fig: stretched_square} and \ref{fig: plate_with_hole}: a stretched square and a quarter of a plate with a hole, represented by a (near) singular NURBS patch. These geometries are discretized with $(20,20)$ and $(40,20)$ subdivisions, respectively. Here, $\mathcal{M} \in \mathcal{S}_{(n_1,n_2)}^+$ and we construct the block lumped matrices $\mathcal{P}_i$ for $i=1,2,3$. The spectrum of $(\mathcal{K},\mathcal{M})$ and $(\mathcal{K},\mathcal{P}_i)$, for $i=1,2,3$, is shown in Figures \ref{fig: 2D_Laplace_stretched_square_block_LM_p3_n20} and \ref{fig: 2D_Laplace_plate_with_hole_block_LM_p3_n20} for the stretched square and the plate with a hole, respectively. As predicted, the generalized eigenvalues of the matrix pairs $(\mathcal{K},\mathcal{P}_i)$ monotonically converge to the eigenvalues of $(\mathcal{K},\mathcal{M})$ from below for increasing values of $i$. This property holds for all eigenvalues, including the ``outliers'', now characterized by a sharp but smooth increase of the spectrum rather than a stepwise increase. For this reason, the distinction between ``outlier'' and ``regular'' eigenvalues is rather ambiguous. For nontrivial problems, ``outlier'' eigenvalues are merely large eigenvalues, which can be removed using deflation techniques such as those presented in Section \ref{se: deflation}. In order to assess the practical gains of the procedure, we solve the wave equation on the plate geometry shown in Figure \ref{fig: plate_with_hole} over the time span $T=[0,6]$ with the manufactured solution $u(x,y,t)=xy(x+4)(y-4)(x^2+y^2-1)(2+\sin(2\pi t))$. The numerical solutions are computed with the central difference method using the critical time step \eqref{eq: CFL_central_difference} multiplied by a safeguarding factor of $0.85$. The results are shown in Figure \ref{fig: plate_with_hole_dynamics} for a small time ($t=0.65$) and a larger time ($t=2.65$). Figure \ref{fig: 2D_Laplace_plate_with_hole_dynamics_rel_error_LM_p3_n20} represents the evolution of the $L^2$ error over time. As one could expect, increasing the block bandwidth improves the accuracy of the lumping techniques. Unfortunately, the improvement also holds true for the outliers, which must be removed. In order to evaluate the computational savings of the outlier removal technique, we consider the ratio $(N_s+N_i)/N_w$ for fixed rank values $r$, where $N_s$ and $N_w$ are the number of iterations with and without scaling, respectively, and $N_i$ is the number of iterations needed by the eigensolver for computing the $r+1$ largest eigenpairs. As we have seen in Section \ref{se: eigenvalue_comp}, an iteration of Lanczos costs nearly as much as an iteration of the central difference method and justifies adding $N_i$ to $N_s$. For conciseness, Figure \ref{fig: 2D_Laplace_plate_with_hole_LR_pert_ratio_p4_n20} only reports the results for the block diagonal matrix $\mathcal{P}_1$ and rank values $r=10,20,40$ for a convergence tolerance of $10^{-3}$. We could verify over the range of ranks tested that the scaling did not have any significant effect on the quality of the solution. The horizontal line for $r=0$ in Figure \ref{fig: 2D_Laplace_plate_with_hole_LR_pert_ratio_p4_n20} indicates the absence of scaling and serves as comparison. A ratio strictly larger than $1$ indicates a deficit: the saving due to the scaling could not offset the cost for computing it. This situation commonly occurs for short-time simulations. However, the workload for computing a few eigenpairs is quickly amortized over longer simulations and may eventually save more than $50 \%$ iterations. In general, one should favor smaller ranks for shorter simulations and larger ranks for longer simulations. In practice, fixing the total number of Lanczos iterations and a loose target rank (defining the size of the Krylov subspace) should help keep control of the computational overhead.

\begin{figure}[htbp]
     \centering
     \begin{subfigure}[t]{0.35\textwidth}
    \centering
    \includegraphics[width=\textwidth]{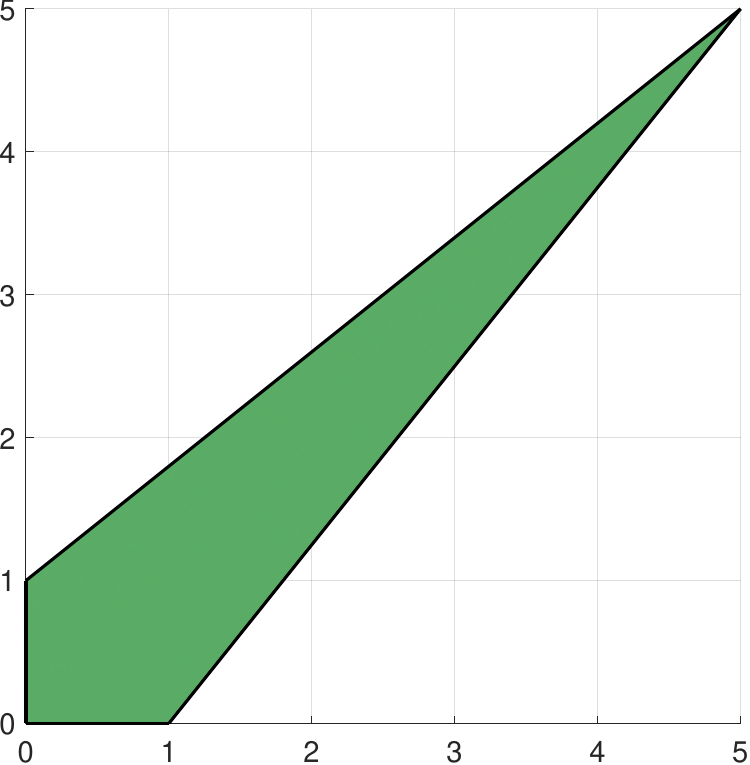}
    \caption{Geometry}
    \label{fig: stretched_square}
     \end{subfigure}
     \hfill
     \begin{subfigure}[t]{0.48\textwidth}
    \centering
    \includegraphics[width=\textwidth]{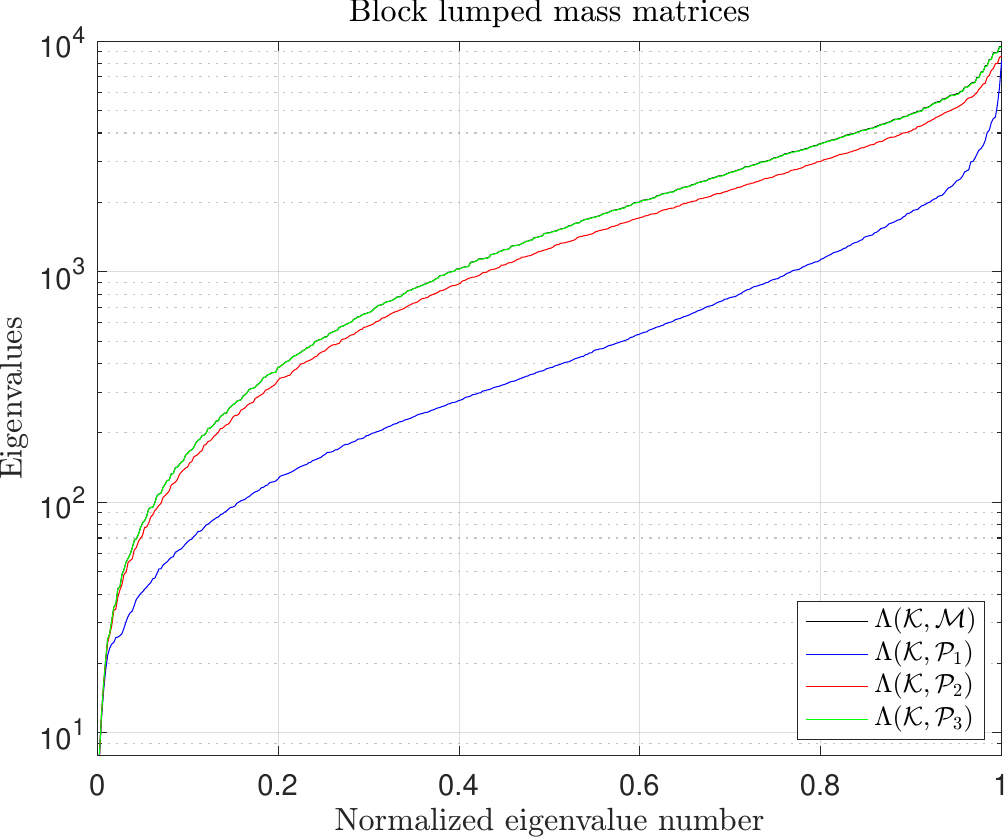}
    \caption{Spectrum}
    \label{fig: 2D_Laplace_stretched_square_block_LM_p3_n20}
     \end{subfigure}
     \hfill
    \caption{Stretched square}
    \label{fig: 2D_tretched_square_eig}
\end{figure}

\begin{figure}[htbp]
     \centering
     \begin{subfigure}[t]{0.35\textwidth}
    \centering
    \includegraphics[width=\textwidth]{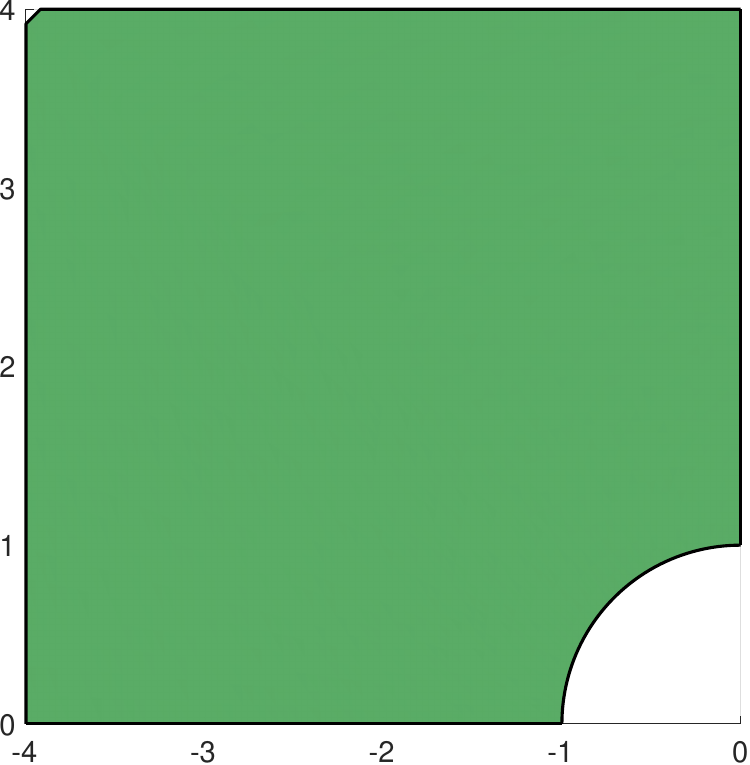}
    \caption{Geometry}
    \label{fig: plate_with_hole}
     \end{subfigure}
     \hfill
     \begin{subfigure}[t]{0.48\textwidth}
    \centering
    \includegraphics[width=\textwidth]{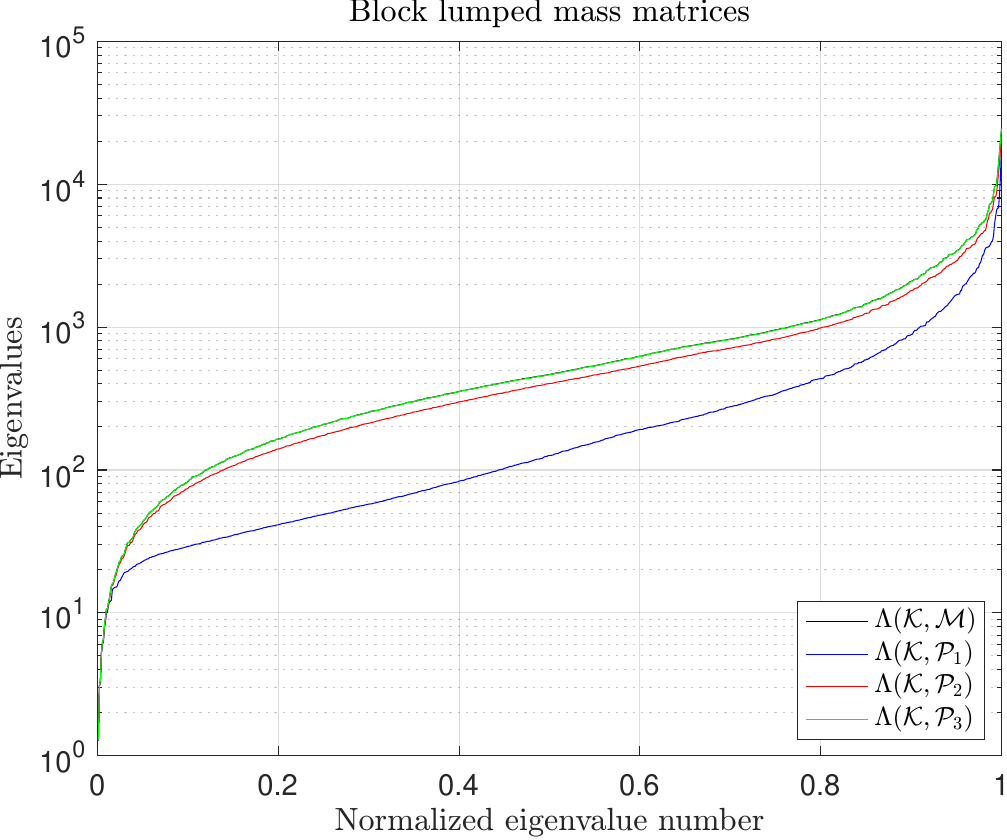}
    \caption{Spectrum}
    \label{fig: 2D_Laplace_plate_with_hole_block_LM_p3_n20}
     \end{subfigure}
     \hfill
    \caption{Quarter of a plate with a hole}
    \label{fig: 2D_plate_with_hole_eig}
\end{figure}

\begin{figure}[htbp]
    \centering
    \begin{subfigure}[t]{0.95\textwidth}
    \centering
    \includegraphics[width=\textwidth]{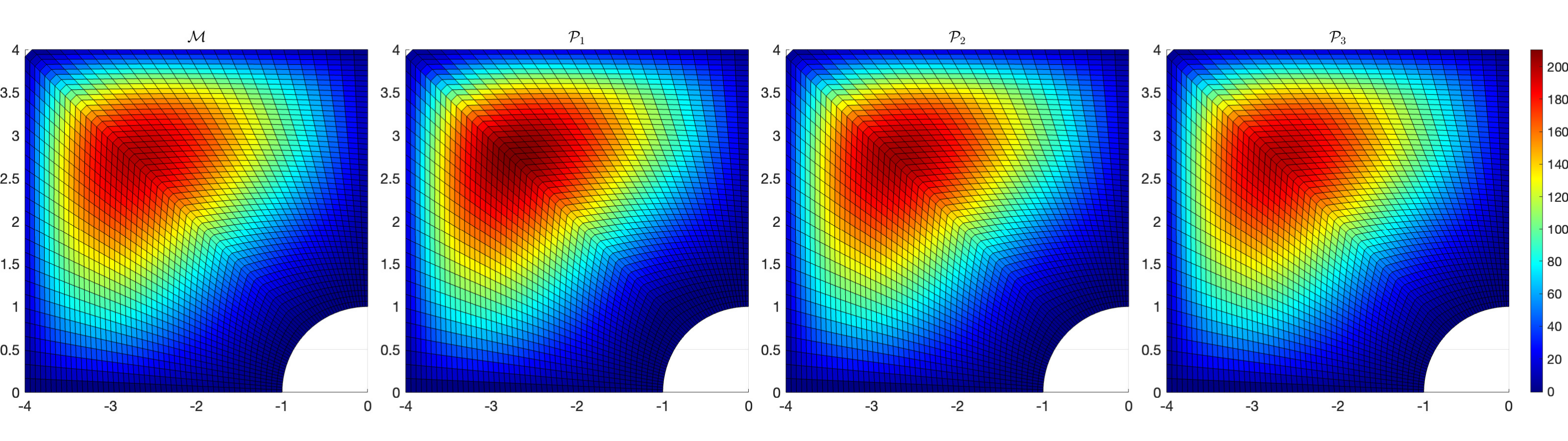}
    \caption{Solution at time $t=0.65$}
    \label{fig: 2D_Laplace_plate_with_hole_dynamics_solution_block_LM_time_0_65_p3_n20}
    \end{subfigure}
    \hfill
    \begin{subfigure}[t]{0.95\textwidth}
    \centering
    \includegraphics[width=\textwidth]{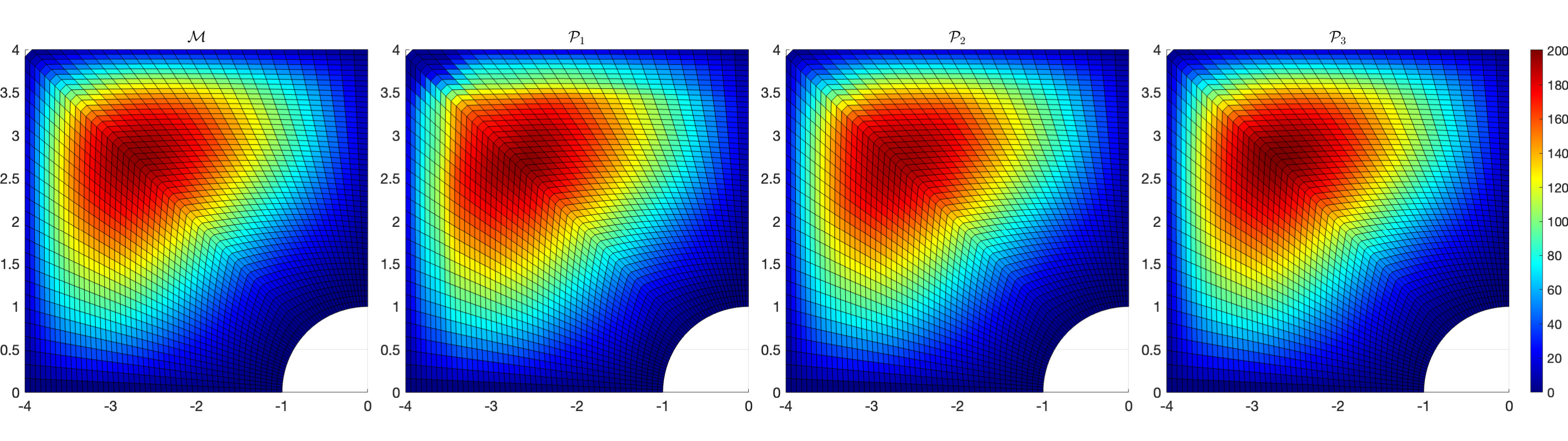}
    \caption{Solution at time $t=2.65$}
    \label{fig: 2D_Laplace_plate_with_hole_dynamics_solution_block_LM_time_2_65_p3_n20}
    \end{subfigure}
    \hfill
    \caption{Numerical solutions for the plate geometry}
    \label{fig: plate_with_hole_dynamics}
\end{figure}

\begin{figure}[htbp]
    \centering
    \includegraphics[scale=0.5]{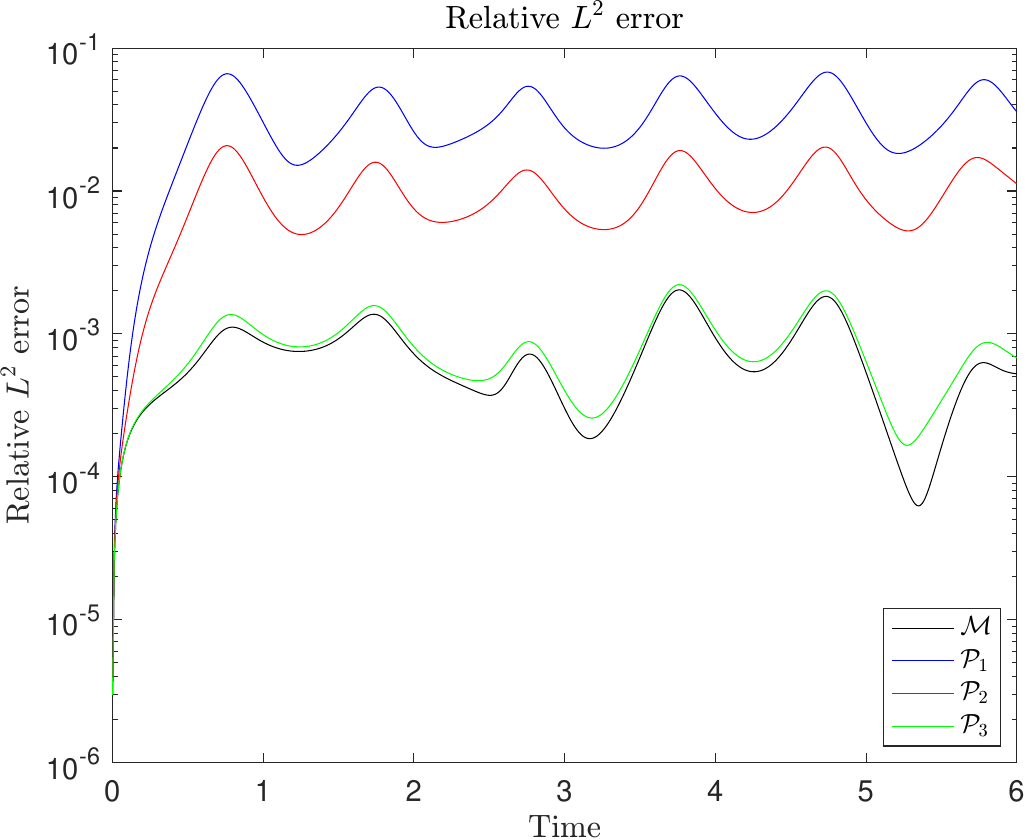}
    \caption{Relative $L^2$ error for the plate geometry}
    \label{fig: 2D_Laplace_plate_with_hole_dynamics_rel_error_LM_p3_n20}
\end{figure}

\begin{figure}[htbp]
    \centering
    \includegraphics[scale=0.5]{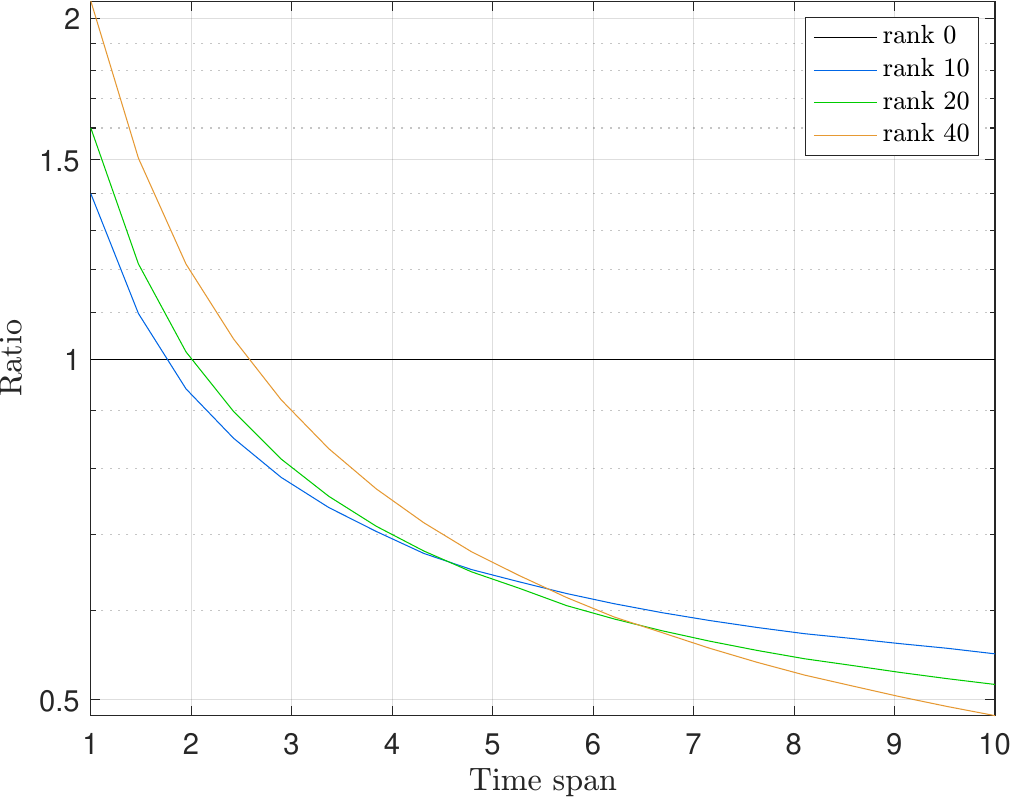}
    \caption{Ratio of number of iterations for the scaled and unscaled methods using $\mathcal{P}_1$}
    \label{fig: 2D_Laplace_plate_with_hole_LR_pert_ratio_p4_n20}
\end{figure}
\end{example}

\begin{remark}
It is important to bear in mind that mass lumping generally depends on the labeling of the parametric directions, which is specific to each software. GeoPDEs, for instance, uses a reverse labeling; i.e. labels $1$, $2$ and $3$ are associated to the $z$, $y$ and $x$ directions, respectively. It might sometimes be advantageous to reorder the system matrices and relabel the parametric directions but we have not experimented with it.
\end{remark}

\begin{example}[Convergence test]
\label{ex: convergence_test_2D}
We now check the convergence of the smallest eigenfrequency for the approximations introduced in Section \ref{se: mass_lumping}. We consider two test cases designed to evaluate the effect of a coefficient function and geometry mapping. The first one is the unit square with a non-separable (but continuous) density function $\rho(x,y)=|\sin(xy)|+x+y+1$ and the second one is the quarter of a plate with a hole (see Figure \ref{fig: plate_with_hole}). Figures \ref{fig: 2D_Laplace_unit_square_coeff_rel_error_smallest_eig_block_ML_p3} and \ref{fig: 2D_Laplace_plate_with_hole_rel_error_smallest_eig_block_ML_p3} show the relative error $\frac{\omega_1-\omega_{h,1}}{\omega_1}$ for the first eigenfrequency and a cubic discretization. Due to the lack of closed form solutions, the reference eigenfrequency $\omega_1$ is a high order approximation computed with the consistent mass on a very fine mesh. The smallest eigenfrequency of $(\mathcal{K}, \mathcal{M})$ converges at the expected rate of $2p$, while the smallest eigenfrequency of $(\mathcal{K}, \mathcal{P}_i)$ converges at a reduced quadratic rate. This observation is in agreement with the well-known fact that the row-sum technique converges at a reduced quadratic rate, independently of the spline order \cite{cottrell2006isogeometric}.

\begin{figure}[htbp]
    \centering
    \begin{subfigure}[t]{0.48\textwidth}
    \centering
    \includegraphics[width=\textwidth]{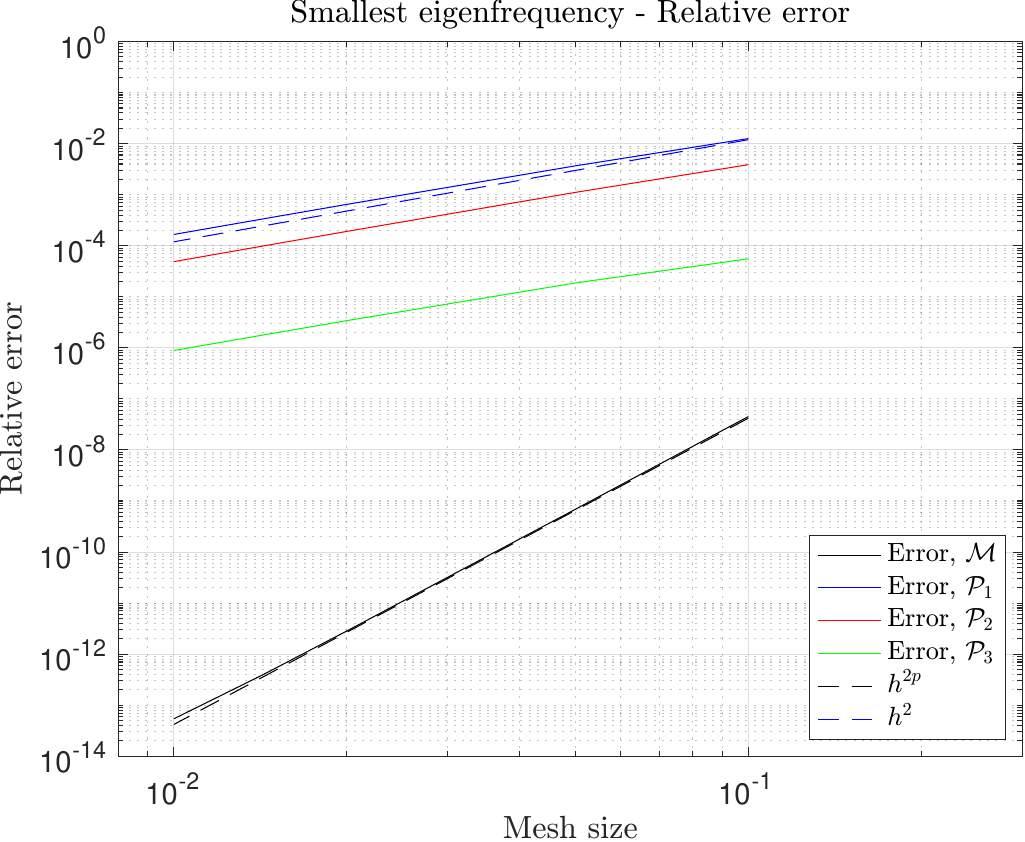}
    \caption{Unit square with a non-separable density function}
    \label{fig: 2D_Laplace_unit_square_coeff_rel_error_smallest_eig_block_ML_p3}
    \end{subfigure}
    \hfill
    \begin{subfigure}[t]{0.48\textwidth}
    \centering
    \includegraphics[width=\textwidth]{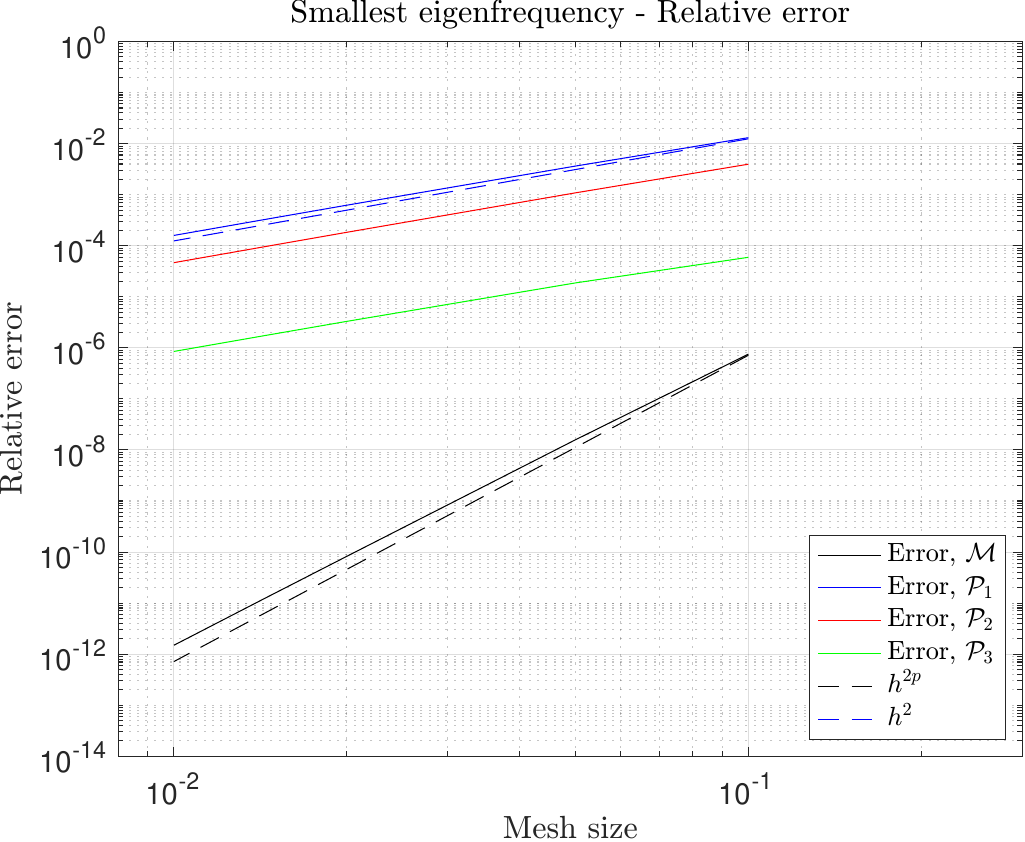}
    \caption{Quarter of a plate with a hole}
    \label{fig: 2D_Laplace_plate_with_hole_rel_error_smallest_eig_block_ML_p3}
    \end{subfigure}
    \hfill
    \caption{Relative error $\frac{\omega_1-\omega_{h,1}}{\omega_1}$}
    \label{fig: rel_error_smallest_eig_block_ML_p3}
\end{figure} 
\end{example}

\begin{example}[Hierarchical mass lumping]
\label{ex: hierarchical_3D_single_patch}
Now we consider a quadratic discretization of the 3D Laplace on the magnet domain shown in Figure \ref{fig: magnet} and test the hierarchical lumped mass matrices described in Section \ref{se: hierarchical_mass_lumping}. Their sparsity pattern is shown in Figure \ref{fig: magnet_sparsity_pattern_hierarchical_ML_n6_p2} together with the consistent mass for $N=12$ subdivisions in each parametric direction. Hierarchical mass lumping leads to a significant reduction of the bandwidth and number of nonzero entries, which drastically speeds up sparse direct solvers. For assessing the performance of mass lumping in explicit dynamics, we solve a sequence of $1000$ linear systems with the consistent mass $\mathcal{M}$ and hierarchical lumped mass matrices $\mathcal{H}_k$ for $k=1,2,3$ on increasingly fine meshes. The solver relies on sparse Cholesky factorizations computed on the reordered matrices using nested dissection. According to Table \ref{tab: computing_times_fixed_time_step}, mass lumping may save several orders of magnitude of computing time, even for relatively small systems. We also noticed that the reordering only slightly reduced the number of nonzero entries in the Cholesky factors and was not the main driver for the enhanced performance. Table \ref{tab: computing_times_fixed_time_span} shows the computing time for a simulation spanning $50$ seconds. While Table \ref{tab: computing_times_fixed_time_step} only accounts for the effect of the linear system solver, Table \ref{tab: computing_times_fixed_time_span} additionally accounts for the saving in the number of time steps thanks to the increase of the critical time step computed with \eqref{eq: CFL_central_difference}.

The impact of hierarchical mass lumping on the accuracy of the solution is (partly) determined by the generalized eigenpairs of $(\mathcal{K},\mathcal{H}_k)$. The eigenvalues are shown in Figure \ref{fig: 3D_Laplace_magnet_hierarchical_ML_p2_n6} alongside those of $(\mathcal{K},\mathcal{M})$ for $N=12$ subdivisions. Interestingly, $\mathcal{H}_2$ seems much more accurate than $\mathcal{H}_3$ and yet barely increases its associated CFL condition. This encouraging result indicates that improved accuracy is possible with only a marginal increase in computational cost. Moreover, similarly to Example \ref{ex: convergence_test_2D}, we verified that hierarchical mass lumping delivered second order convergent eigenvalues.

\begin{figure}[htbp]
     \centering
     \begin{subfigure}[t]{0.48\textwidth}
    \centering
    \includegraphics[width=\textwidth]{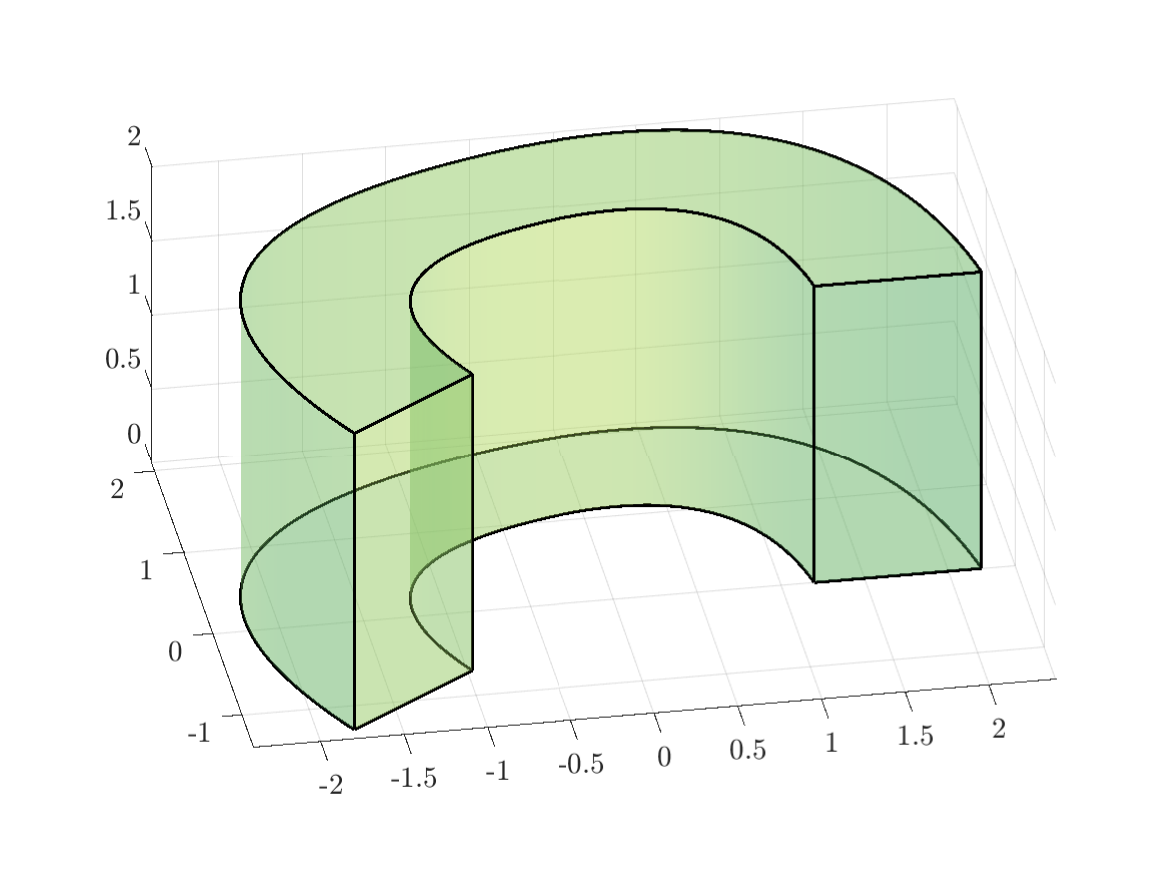}
    \caption{Geometry}
    \label{fig: magnet}
     \end{subfigure}
     \hfill
     \begin{subfigure}[t]{0.48\textwidth}
    \centering
    \includegraphics[width=\textwidth]{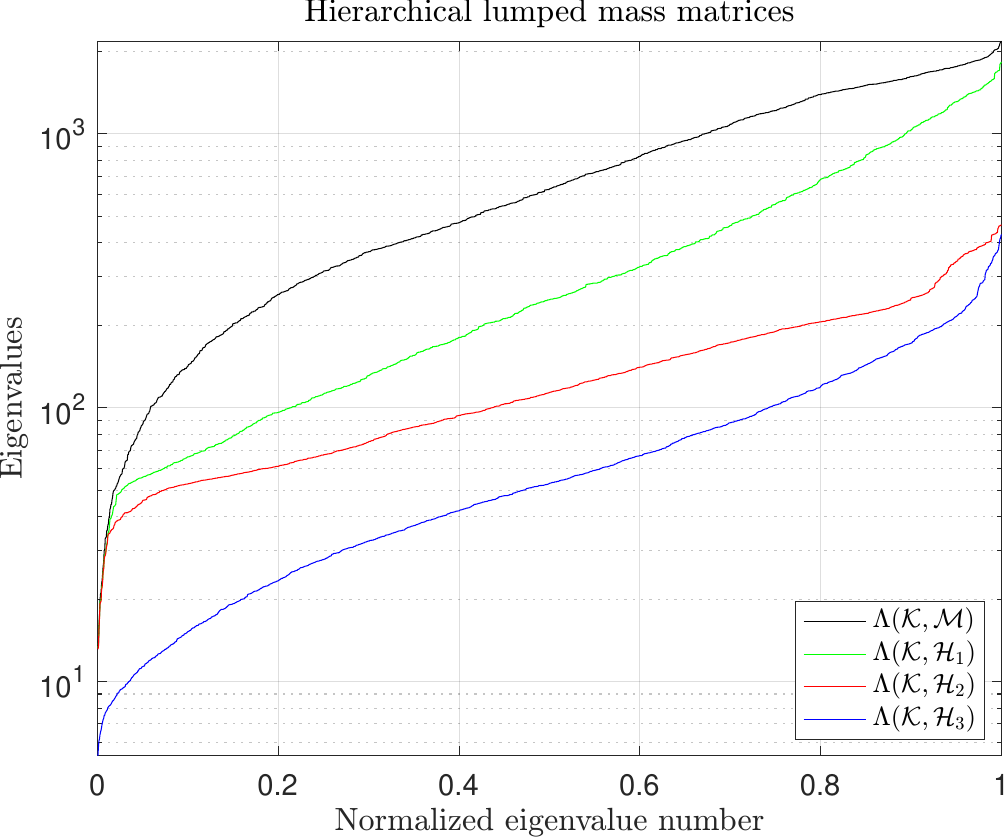}
    \caption{Spectrum}
    \label{fig: 3D_Laplace_magnet_hierarchical_ML_p2_n6}
     \end{subfigure}
     \hfill
    \caption{Magnet}
    \label{fig: 3D_magnet_eig}
\end{figure}

\begin{figure}[htbp]
     \centering
     \begin{subfigure}[t]{0.22\textwidth}
    \centering
    \includegraphics[width=\textwidth]{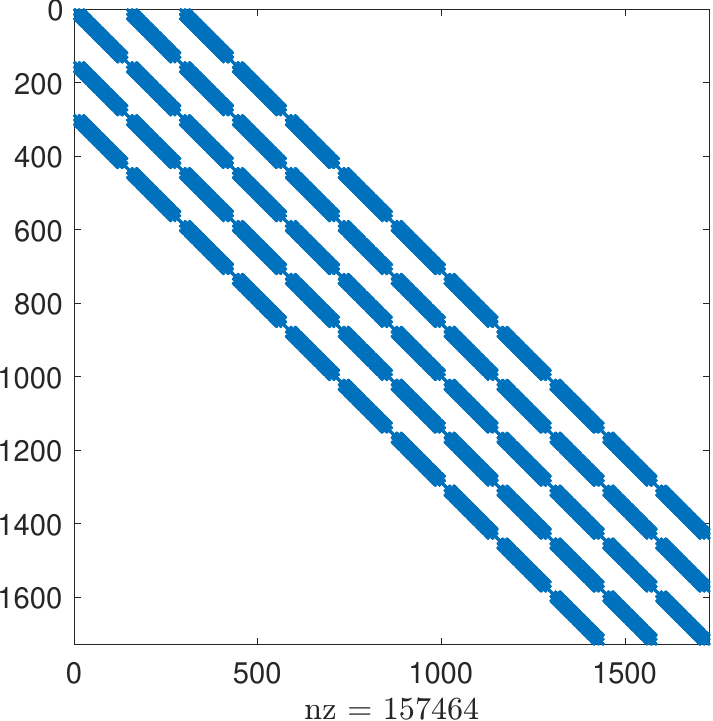}
    \caption{$\mathcal{M}$}
    \label{fig: 3D_Laplace_magnet_sparsity_M_n6_p2}
     \end{subfigure}
     \hfill
     \begin{subfigure}[t]{0.22\textwidth}
    \centering
    \includegraphics[width=\textwidth]{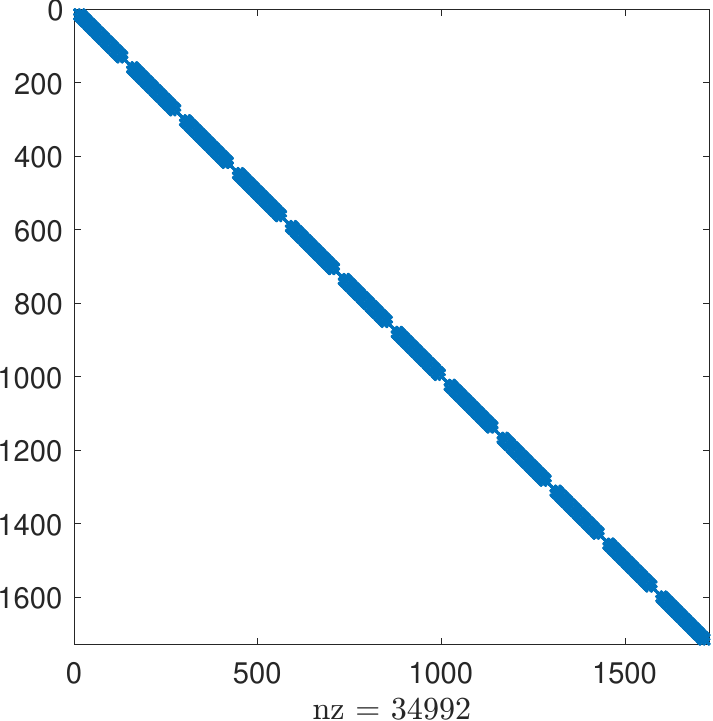}
    \caption{$\mathcal{H}_1$}
    \label{fig: 3D_Laplace_magnet_sparsity_H1_n6_p2}
     \end{subfigure}
     \hfill
    \begin{subfigure}[t]{0.22\textwidth}
    \centering
    \includegraphics[width=\textwidth]{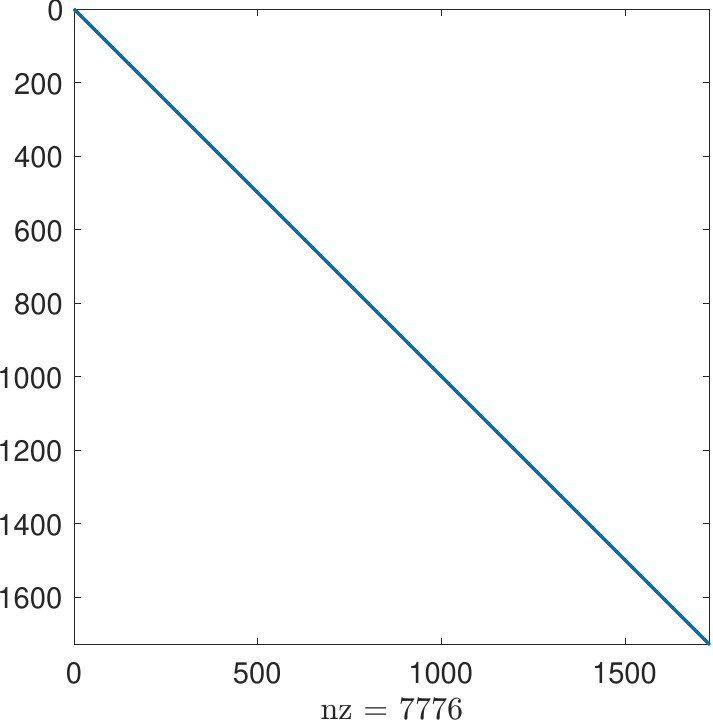}
    \caption{$\mathcal{H}_2$}
    \label{fig: 3D_Laplace_magnet_sparsity_H2_n6_p2}
     \end{subfigure}
     \hfill
    \begin{subfigure}[t]{0.22\textwidth}
    \centering
    \includegraphics[width=\textwidth]{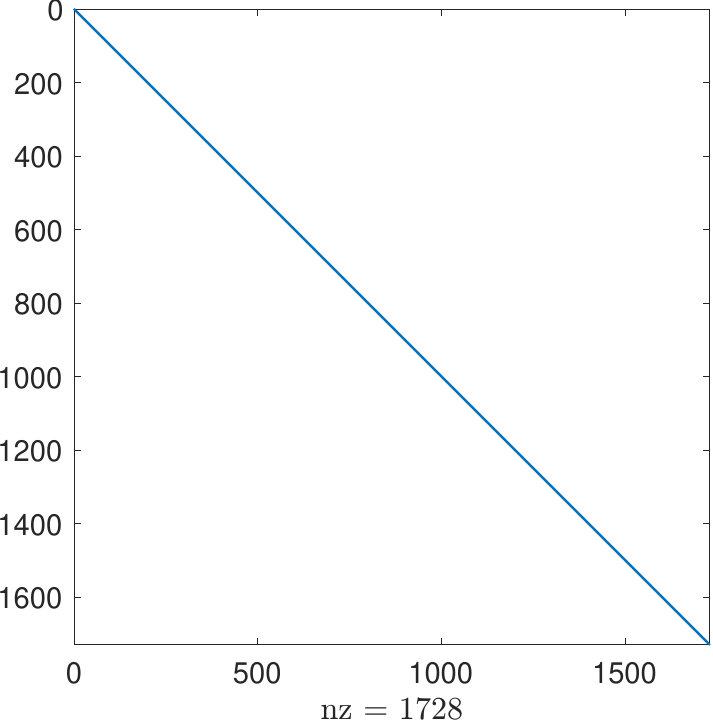}
    \caption{$\mathcal{H}_3$}
    \label{fig: 3D_Laplace_magnet_sparsity_H3_n6_p2}
     \end{subfigure}
     \hfill
    \caption{Sparsity patterns}
    \label{fig: magnet_sparsity_pattern_hierarchical_ML_n6_p2}
\end{figure}

\begin{table}[htbp]
    \centering
    \begin{subtable}[t]{1.0\textwidth}
    \centering
    \begin{tabular}{c c|c c c c|}  
    \cline{3-6}
     & & \multicolumn{4}{|c|}{Time [s]} \\
    \hline
    \multicolumn{1}{|c|}{$N$} & \multicolumn{1}{|c|}{Size} & $\mathcal{M}$ & $\mathcal{H}_1$ &  $\mathcal{H}_2$ & $\mathcal{H}_3$ \\
    \hline
    \multicolumn{1}{|l|}{6} & \multicolumn{1}{|l|}{216} & 0.02 & 0.008 & 0.006 & 0.005 \\
    \multicolumn{1}{|l|}{12} & \multicolumn{1}{|l|}{1728} & 0.56 & 0.07 & 0.03 & 0.02 \\
    \multicolumn{1}{|l|}{18} & \multicolumn{1}{|l|}{5832} & 3.64 & 0.33 & 0.11 & 0.07 \\
    \multicolumn{1}{|l|}{24} & \multicolumn{1}{|l|}{13824} & 13.5 & 0.97 & 0.27 & 0.17 \\
    \multicolumn{1}{|l|}{30} & \multicolumn{1}{|l|}{27000} & 36.38 & 3.22 & 0.53 & 0.33 \\
    \hline
    \end{tabular}
    \caption{Sequence of $1000$ linear systems}
    \label{tab: computing_times_fixed_time_step}
    \end{subtable}    
    \hfill
    \vspace{10pt}
    \begin{subtable}[t]{1.0\textwidth}
    \centering
    \begin{tabular}{c c|c c c c|c c c c|}  
    \cline{3-10}
     & & \multicolumn{4}{|c|}{Time [s]} & \multicolumn{4}{|c|}{Number of time steps} \\
    \hline
    \multicolumn{1}{|c|}{$N$} & \multicolumn{1}{|c|}{Size} & $\mathcal{M}$ & $\mathcal{H}_1$ &  $\mathcal{H}_2$ & $\mathcal{H}_3$ & $\mathcal{M}$ & $\mathcal{H}_1$ &  $\mathcal{H}_2$ & $\mathcal{H}_3$ \\
    \hline
    \multicolumn{1}{|l|}{6} & \multicolumn{1}{|l|}{216} & 0.012 & 0.004 & 0.002 & 0.001 & 578 & 530 & 270 & 257 \\
    \multicolumn{1}{|l|}{12} & \multicolumn{1}{|l|}{1728} & 0.67 & 0.08 & 0.02 & 0.01 & 1167 & 1069 & 539 & 518 \\
    \multicolumn{1}{|l|}{18} & \multicolumn{1}{|l|}{5832} & 6.37 & 0.54 & 0.09 & 0.06 & 1756 & 1608 & 814 & 785 \\
    \multicolumn{1}{|l|}{24} & \multicolumn{1}{|l|}{13824} & 31.7 & 2.06 & 0.29 & 0.18 & 2345 & 2148 & 1090 & 1052 \\
    \multicolumn{1}{|l|}{30} & \multicolumn{1}{|l|}{27000} & 106.3 & 6.46 & 0.73 & 0.45 & 2935 & 2689 & 1367 & 1320 \\
    \hline
    \end{tabular}
    \caption{Simulation of $50$ seconds}
    \label{tab: computing_times_fixed_time_span}
    \end{subtable}
    \hfill
    \caption{System size, computing times (in seconds) and number of time steps for the consistent mass and hierarchical lumped mass matrices. $N=h^{-1}$ denotes the number of subdivisions in each parametric direction.}
    \label{fig: computing_times_hierarchical}
\end{table}

We consider now a more realistic situation by solving a linear elasticity problem on the magnet domain of Figure \ref{fig: magnet}. Homogeneous Dirichlet boundary conditions are prescribed on its base and homogeneous Neumann boundary conditions on its side faces. A slowly oscillating traction force, given by
\begin{equation*}
    \bm{\tau}(\mathbf{x},t)=
    \begin{pmatrix}
        0 \\
        0 \\
        -q\sin(\frac{8 \pi t}{T})
    \end{pmatrix}
\end{equation*}
is applied on its top face, where $q=20 \ \text{MPa}$ is the pressure's magnitude and $T=10^{-2} \ \text{s}$ is the final time. We assume the magnet is made out of steel (elastic modulus $E=207 \ \text{GPa}$, Poisson's ratio $\nu = 0.3$ and density $\rho = 7800 \ \text{kg/m\textsuperscript{3}}$). The problem is discretized in space using quadratic B-splines with $N=6$ subdivisions in each parametric direction and approximated in time using the central difference method. The critical time step, given by \eqref{eq: CFL_central_difference}, and multiplied by a safeguarding factor of $0.85$ leads to $2305$, $2213$, $1107$ and $1057$ time steps for $\mathcal{M}$, $\mathcal{H}_1$, $\mathcal{H}_2$ and $\mathcal{H}_3$, respectively. Figure \ref{fig: 3D_Elasticity_magnet_dynamics_error_CD_N2305_p2_n6} shows the $L^2$ error committed with respect to the solution for the consistent mass. In this figure, we have used the same step sizes to ensure valid comparison of the discrete solutions. Whereas the row-sum lumped mass matrix $\mathcal{H}_3$ induces a significant error, the hierarchical lumped mass matrices $\mathcal{H}_2$ and $\mathcal{H}_1$ provide a much better approximation. Moreover, the computing times for solving linear systems scaled similarly as those reported in Tables \ref{tab: computing_times_fixed_time_step} and \ref{tab: computing_times_fixed_time_span} for $N=6$. These two observations make a compelling case for $\mathcal{H}_2$ as it (nearly) provides the accuracy of $\mathcal{H}_1$ but at the cost of $\mathcal{H}_3$. In practice, the industry's concern is speed more than accuracy and we would advise choosing the hierarchical level $k$ (or block bandwidth $i$) based on the estimated slowdown with respect to the row-sum technique.

\begin{figure}[htbp]
    \centering
    \includegraphics[scale=0.5]{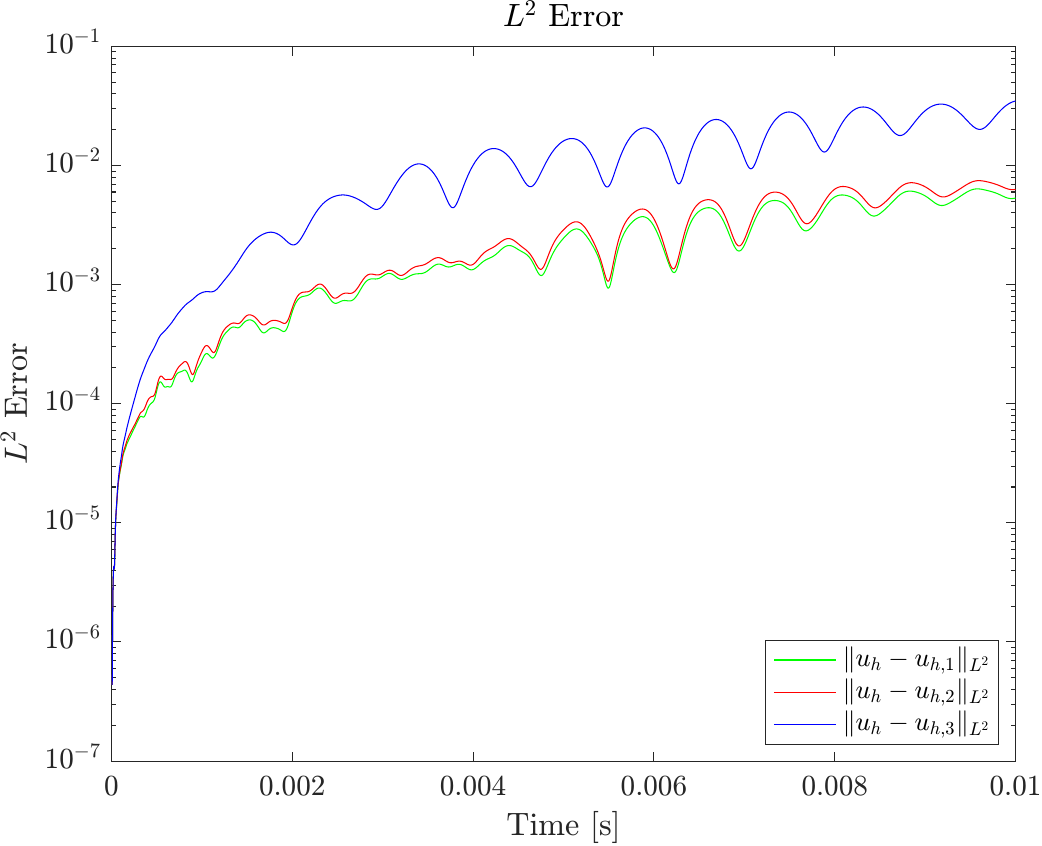}
    \caption{$L^2$ error with respect to the solution for the consistent mass}
    \label{fig: 3D_Elasticity_magnet_dynamics_error_CD_N2305_p2_n6}
\end{figure}
\end{example}

\subsection{Multipatch geometries}

\begin{example}
We consider a cubic discretization of the Laplace on the quarter of a plate with a hole, as shown in Figure \ref{fig: plate_with_hole}, and split into two patches to remove sources of singularity. The sparsity patterns of the consistent mass and lumped mass matrices $\mathcal{P}_i$ for $i=1,2$ are shown in Figure \ref{fig: sparsity_patterns_mp} for $15$ subdivisions in each parametric direction and each patch. Mass lumping for multipatch problems does not completely remove the coupling but instead focuses on reducing the bandwidth of the diagonal blocks in order to easily form the Schur complement. While increasing the bandwidth generally improves the accuracy, forming the Schur complement also typically takes a heavier toll. This tradeoff must be carefully balanced on a case-by-case basis. As explained in Section \ref{se: outlier_removal}, we try scaling the local stiffness matrices before assembling them into a global matrix. For clarity, we denote $\bar{\mathcal{K}}_{\textrm{loc}}$ the locally scaled stiffness matrix. Given its heuristic nature, this scaling strategy might affect the smallest eigenvalues. Nevertheless, Figure \ref{fig: 2D_Laplace_plate_with_hole_P2_LR_pert_p3_n15_r20_mp}, obtained for a rank 20 patchwise scaling and a tolerance of $10^{-3}$, reveals that this effect is very mild. By comparing the results with a rank 40 global mass scaling, we notice that the local scaling method removes fewer outliers. This is expected given that it cannot remove interior outliers, arising from the $C^0$ coupling of patch interfaces. The convergence test carried out in Figure \ref{fig: 2D_Laplace_plate_with_hole_P2_LR_rel_error_smallest_eig_p3_mp} further indicates that the smallest eigenfrequency converges at a second order rate, as it does in the purely lumped mass case (using $P_1$). For the sake of clarity, we have only reported the results for $\mathcal{P}_2$ but they seem to hold more generally.

\begin{figure}[htbp]
    \centering
    \begin{subfigure}[t]{0.32\textwidth}
    \centering
    \includegraphics[width=\textwidth]{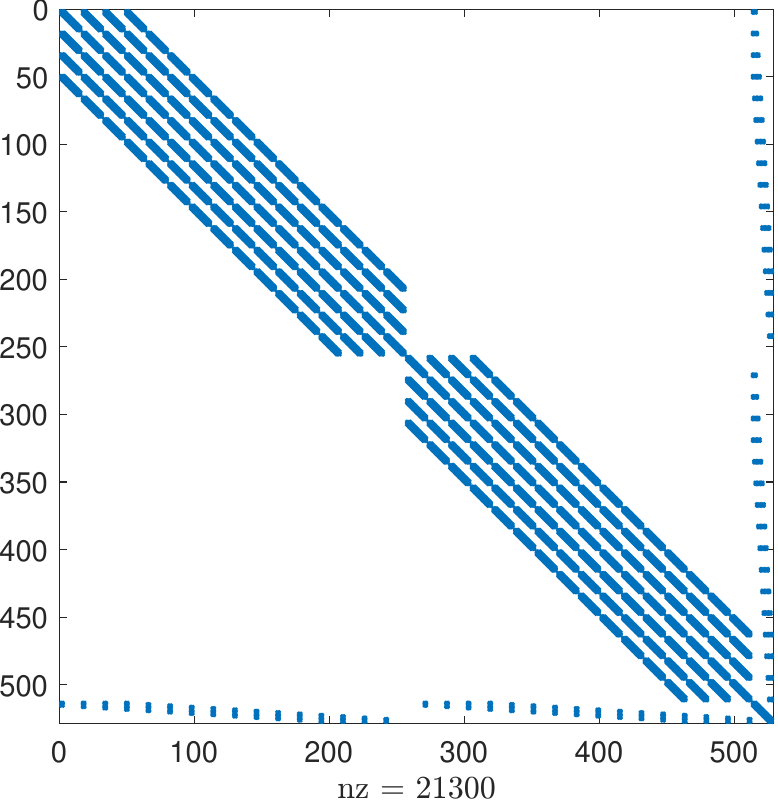}
    \caption{$\mathcal{M}$}
    \label{fig: 2D_Laplace_plate_with_hole_sparsity_M_p3_n15_mp}
    \end{subfigure}
    \hfill
    \begin{subfigure}[t]{0.32\textwidth}
    \centering
    \includegraphics[width=\textwidth]{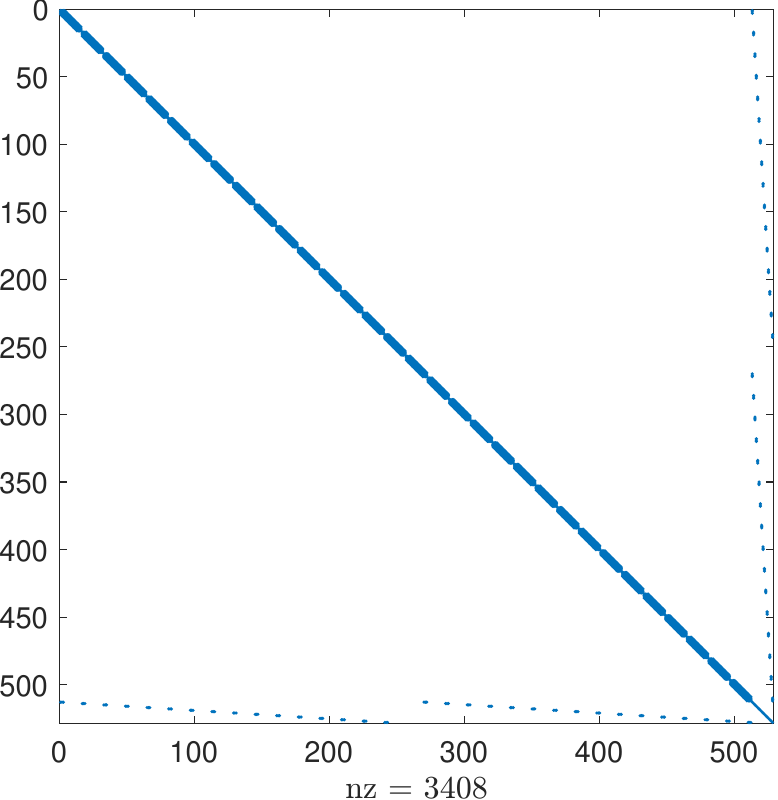}
    \caption{$\mathcal{P}_1$}
    \label{fig: 2D_Laplace_plate_with_hole_sparsity_P1_p3_n15_mp}
    \end{subfigure}    
    \hfill
    \begin{subfigure}[t]{0.32\textwidth}
    \centering
    \includegraphics[width=\textwidth]{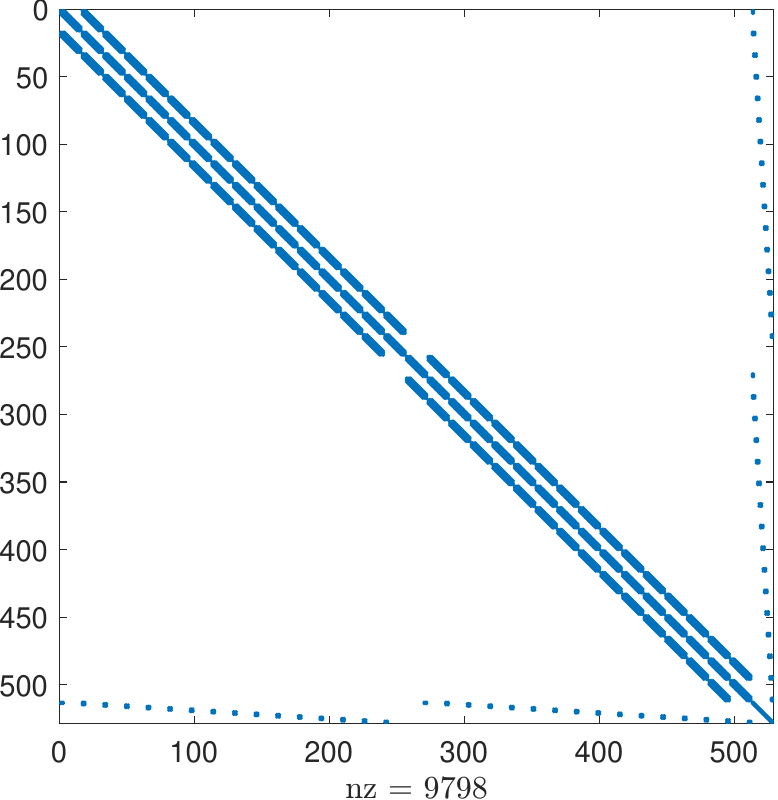}
    \caption{$\mathcal{P}_2$}
    \label{fig: 2D_Laplace_plate_with_hole_sparsity_P2_p3_n15_mp}
    \end{subfigure}    
    \hfill
    \caption{Sparsity patterns}
    \label{fig: sparsity_patterns_mp}
\end{figure}

\begin{figure}[htbp]
    \centering
    \begin{subfigure}[t]{0.48\textwidth}
    \centering
    \includegraphics[width=\textwidth]{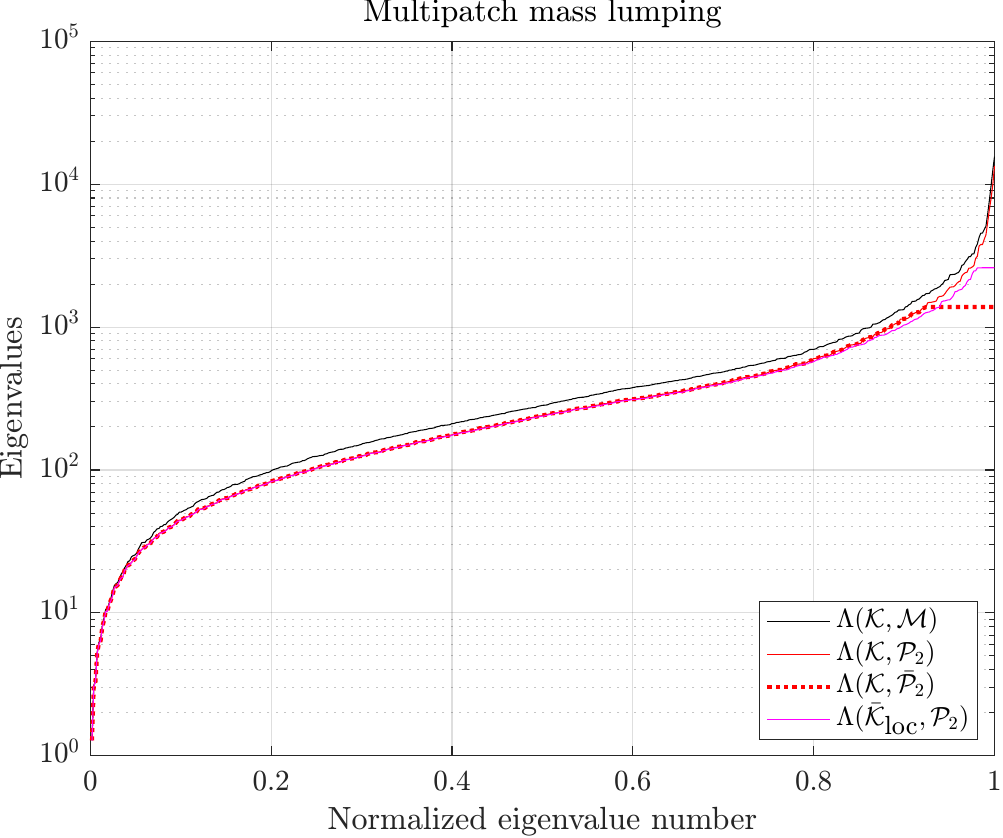}
    \caption{Spectrum}
    \label{fig: 2D_Laplace_plate_with_hole_P2_LR_pert_p3_n15_r20_mp}
    \end{subfigure}    
    \hfill
    \begin{subfigure}[t]{0.48\textwidth}
    \centering
    \includegraphics[width=\textwidth]{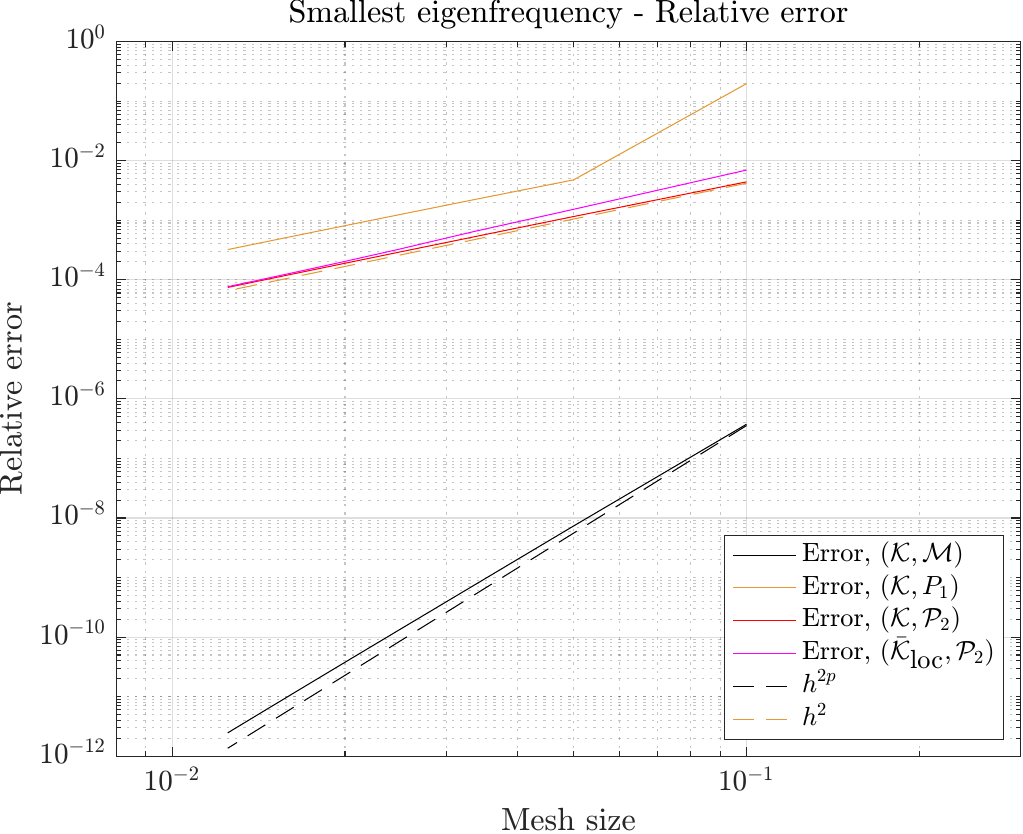}
    \caption{Relative error $\frac{\omega_1-\omega_{h,1}}{\omega_1}$ for the consistent mass and lumped mass with and without scaling}
    \label{fig: 2D_Laplace_plate_with_hole_P2_LR_rel_error_smallest_eig_p3_mp}
    \end{subfigure}
    \hfill
    \caption{Spectrum and convergence test}
    \label{fig: multipatch_geometries}
\end{figure} 
\end{example}

\begin{example}
This experiment aims at better understanding how patch configurations affect the scaling outcome. Our benchmarks consist of the unit square split into a $4 \times 4$ grid of patches and a rectangle of length $4$ and width $0.25$ split into a $1 \times 16$ grid. Both configurations are discretized using quadratic B-splines and $N=8$ subdivisions in each direction and each patch. The size and number of patches of both configurations are the same (16 patches of size $0.25 \times 0.25$) but are coupled very differently. Although homogeneous Dirichlet boundary conditions are prescribed along the entire boundary in both cases, the first one features a number of interior patches and consequently more degrees of freedom. Figures \ref{fig: 2D_Laplace_1x1_grid_4x4_n8_p2_rg160_rl10} and \ref{fig: 2D_Laplace_4x0.25_grid_1x16_n8_p2_rg160_rl10} compare for both configurations a local patchwise scaling of rank $10$ with a global scaling of rank $160$. Surprisingly, our local scaling strategy performs poorly even for the second, weakly coupled, configuration. In both cases, the global scaling is again more efficient at removing outliers. However, this strategy is also more expensive than our local (sequential) patchwise scaling, as shown in Tables \ref{tab: computing_times_global_vs_local_1x1_grid_4x4_p2_rg160_rl10} and \ref{tab: computing_times_global_vs_local_4x0.25_grid_1x16_p2_rg160_rl10} for increasingly fine meshes. The global scaling strategy is generally slightly faster on the second configuration, probably owing to the smaller system size. Nevertheless, the outcome generally depends on the patch configuration, which affects the spectral properties of the system matrices and consequently the convergence speed of the Lanczos method. Therefore, a direct comparison of global and local scaling is not trivial, even while neglecting potential for parallelism. A more thorough study is needed before drawing definite conclusions. In the standard tensor product case, mesh refinement and degree elevation widens the gap between ``outlier'' and ``regular'' eigenvalues and benefits Lanczos while in these multipatch examples, the number of Lanczos iterations (for a tolerance of $10^{-3}$) remains roughly constant beyond a certain system size and may occasionally exceed the system size (especially for small systems) due to restarting. 

\begin{figure}[htbp]
    \centering
    \begin{subfigure}[t]{0.48\textwidth}
    \centering
    \includegraphics[width=\textwidth]{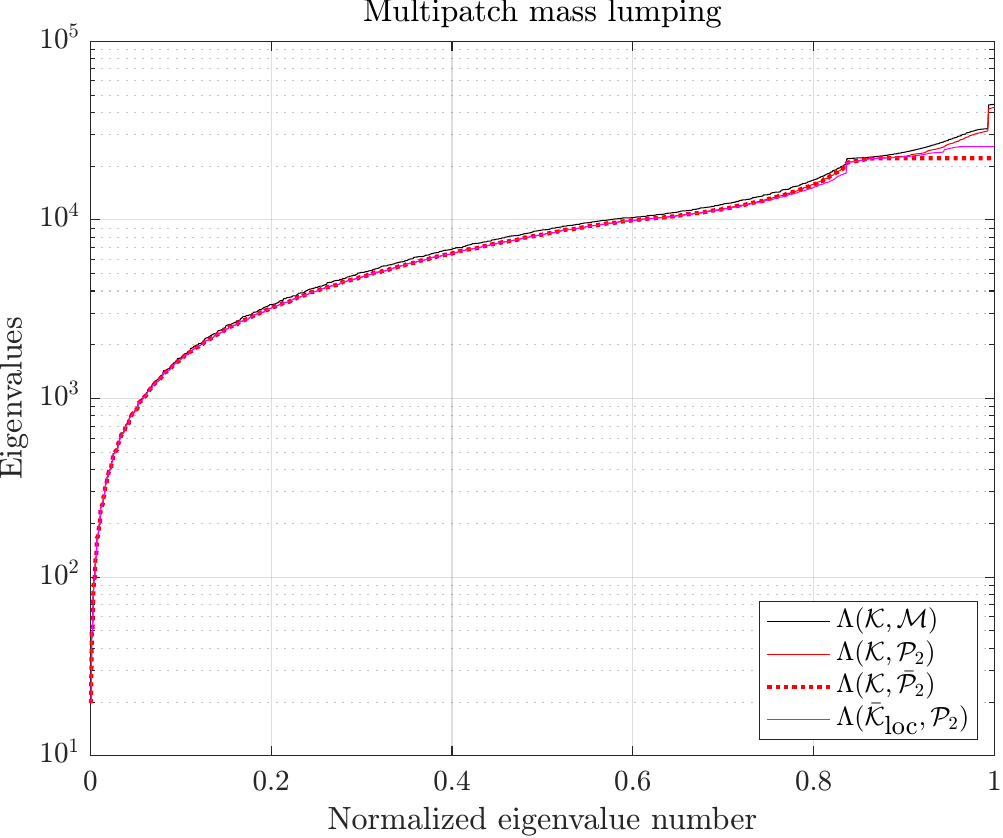}
    \caption{Unit square split in a $4 \times 4$ grid of patches}
    \label{fig: 2D_Laplace_1x1_grid_4x4_n8_p2_rg160_rl10}
    \end{subfigure}    
    \hfill
    \begin{subfigure}[t]{0.48\textwidth}
    \centering
    \includegraphics[width=\textwidth]{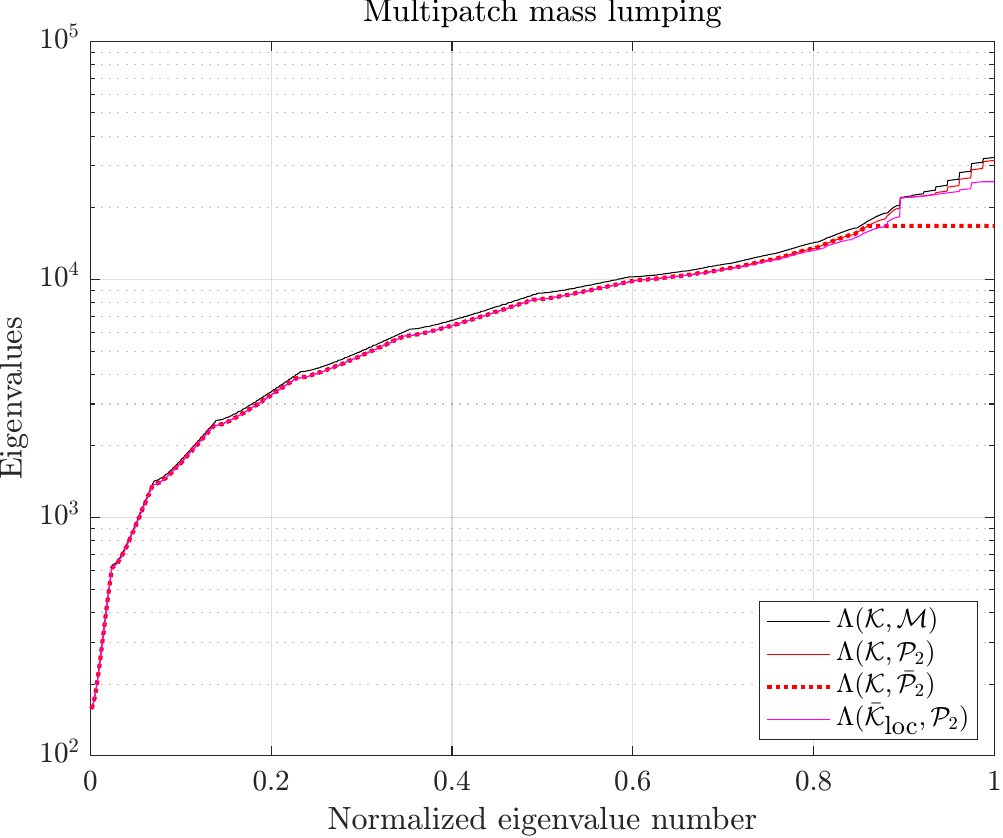}
    \caption{Rectangle of length $4$ and width $0.25$ split into a $1 \times 16$ grid of patches}
    \label{fig: 2D_Laplace_4x0.25_grid_1x16_n8_p2_rg160_rl10}
    \end{subfigure}
    \hfill
    \caption{Comparison of global and local scaling for different patch configurations}
    \label{fig: multipatch_configurations}
\end{figure}

\begin{table}[htbp]
    \centering
    \begin{subtable}[t]{1.0\textwidth}
    \centering
    \begin{tabular}{c c|c c|c c|}  
    \cline{3-6}
     & & \multicolumn{2}{|c|}{Time [s]} & \multicolumn{2}{|c|}{Iterations} \\
    \hline
    \multicolumn{1}{|c|}{$N$} & \multicolumn{1}{|c|}{Size} & Global & Local & Global & Local \\
    \hline
    \multicolumn{1}{|l|}{4} & \multicolumn{1}{|l|}{361} & 0.05 & 0.03 & 480 & 160 \\
    \multicolumn{1}{|l|}{8} & \multicolumn{1}{|l|}{1225} & 0.18 & 0.03 & 832 & 80 \\
    \multicolumn{1}{|l|}{12} & \multicolumn{1}{|l|}{2601} & 0.45 & 0.04 & 800 & 112 \\
    \multicolumn{1}{|l|}{16} & \multicolumn{1}{|l|}{4489} & 0.76 & 0.09 & 800 & 228 \\
    \multicolumn{1}{|l|}{20} & \multicolumn{1}{|l|}{6889} & 1.21 & 0.12 & 832 & 145 \\
    \hline
    \end{tabular}
    \caption{Unit square split into a $4 \times 4$ grid of patches.}
    \label{tab: computing_times_global_vs_local_1x1_grid_4x4_p2_rg160_rl10}
    \end{subtable}    
    \hfill
    \vspace{10pt}
    \begin{subtable}[t]{1.0\textwidth}
    \centering
    \begin{tabular}{c c|c c|c c|}  
    \cline{3-6}
     & & \multicolumn{2}{|c|}{Time [s]} & \multicolumn{2}{|c|}{Iterations} \\
    \hline
    \multicolumn{1}{|c|}{$N$} & \multicolumn{1}{|c|}{Size} & Global & Local & Global & Local \\
    \hline
    \multicolumn{1}{|l|}{4} & \multicolumn{1}{|l|}{316} & 0.02 & 0.02 & 316 & 157 \\
    \multicolumn{1}{|l|}{8} & \multicolumn{1}{|l|}{1144} & 0.10 & 0.04 & 480 & 81 \\
    \multicolumn{1}{|l|}{12} & \multicolumn{1}{|l|}{2484} & 0.37 & 0.05 & 960 & 112 \\
    \multicolumn{1}{|l|}{16} & \multicolumn{1}{|l|}{4336} & 0.67 & 0.09 & 1120 & 236 \\
    \multicolumn{1}{|l|}{20} & \multicolumn{1}{|l|}{6700} & 1.29 & 0.11 & 1088 & 147 \\
    \hline
    \end{tabular}
    \caption{Rectangle of length $4$ and width $0.25$ split into a $1 \times 16$ grid of patches.}
    \label{tab: computing_times_global_vs_local_4x0.25_grid_1x16_p2_rg160_rl10}
    \end{subtable}
    \hfill
    \caption{System size, computing times (in seconds) and number of Lanczos iterations for the global and local (sequential) scaling strategies. Computing times are averaged out over $5$ independent runs. Local iteration counts are averaged out across patches and rounded to the nearest integer. $N=h^{-1}$ denotes the number of subdivisions in each parametric direction and each patch.}
    \label{fig: computing_times_global_vs_local_scaling}
\end{table}
\end{example}

\begin{example}
As we have noted in Section \ref{se: multipatch_mass_lumping}, any suitable mass lumping technique may be applied patchwise before assembling the global multipatch lumped mass matrix. In dimension $d \geq 3$, hierarchical mass lumping stands out as the natural candidate. When the context is clear, we do not distinguish the single-patch matrices from their multipatch counterpart. We solve the Laplace eigenvalue problem on the $3$-patch twisted box geometry shown in Figure \ref{fig: twisted_box}, discretized with quadratic B-splines and $N=12$ subdivisions in each parametric direction and each patch. The spectra of $(\mathcal{K},\mathcal{M})$ and $(\mathcal{K},\mathcal{H}_k)$ for $k=1,2,3$ are shown in Figure \ref{fig: 3D_Laplace_twisted_box_hierarchical_ML_p2_n6_mp} and confirm the improved accuracy on the low-frequency part of the spectrum with respect to the row-sum technique (i.e. $\mathcal{H}_3$). Similarly to Example \ref{ex: hierarchical_3D_single_patch}, hierarchical mass lumping yields a drastic reduction of the bandwidth and number of nonzero entries, as shown in Figure \ref{fig: twisted_box_sparsity_pattern_hierarchical_ML_n6_p2_mp}.

\begin{figure}[htbp]
     \centering
     \begin{subfigure}[t]{0.48\textwidth}
    \centering
    \includegraphics[width=\textwidth]{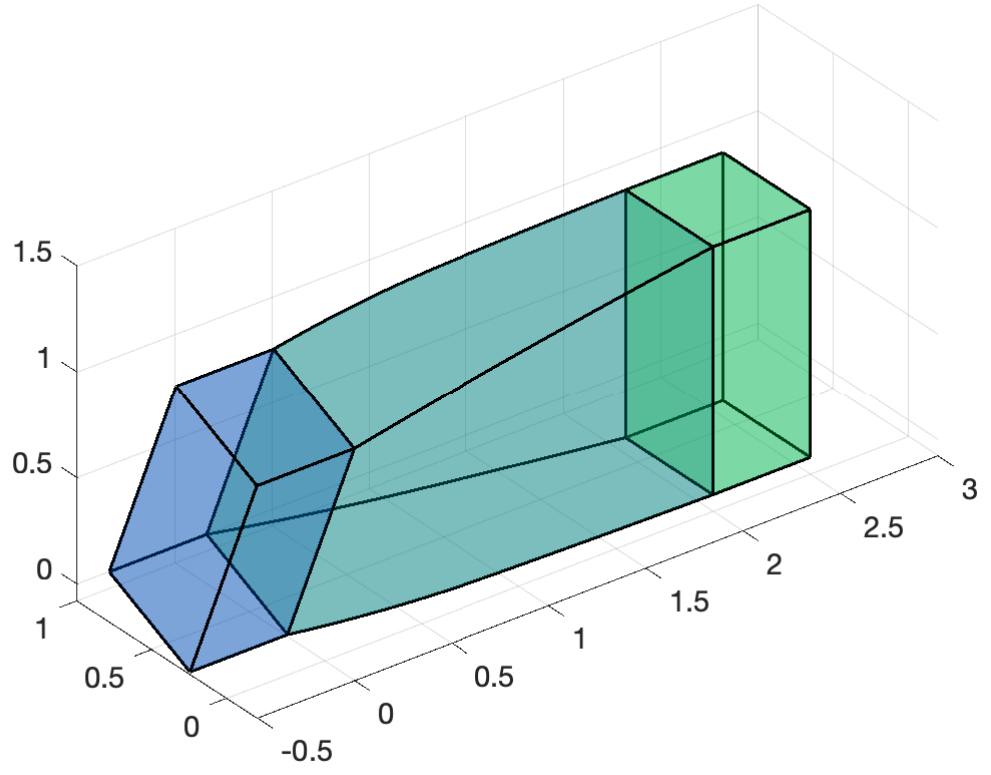}
    \caption{Geometry. Different colors identify different patches}
    \label{fig: twisted_box}
     \end{subfigure}
     \hfill
     \begin{subfigure}[t]{0.48\textwidth}
    \centering
    \includegraphics[width=\textwidth]{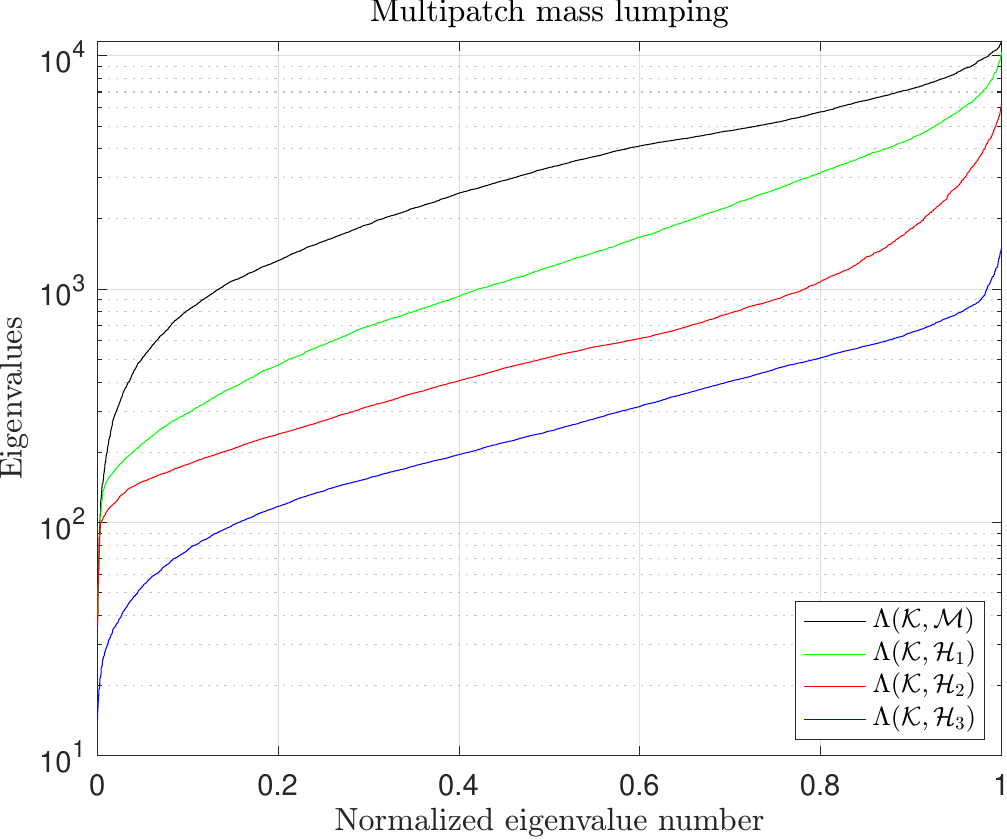}
    \caption{Spectrum}
    \label{fig: 3D_Laplace_twisted_box_hierarchical_ML_p2_n6_mp}
     \end{subfigure}
     \hfill
    \caption{Twisted box}
    \label{fig: 3D_multipatch_twisted_box}
\end{figure}

\begin{figure}[htbp]
     \centering
     \begin{subfigure}[t]{0.22\textwidth}
    \centering
    \includegraphics[width=\textwidth]{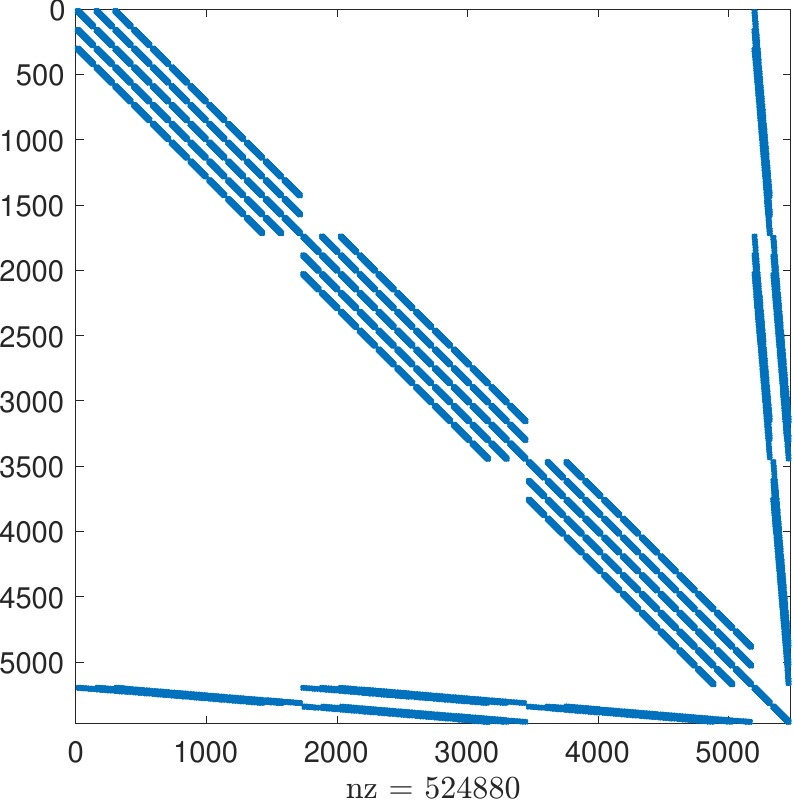}
    \caption{$\mathcal{M}$}
    \label{fig: 3D_Laplace_twisted_box_sparsity_M_n6_p2_mp}
     \end{subfigure}
     \hfill
     \begin{subfigure}[t]{0.22\textwidth}
    \centering
    \includegraphics[width=\textwidth]{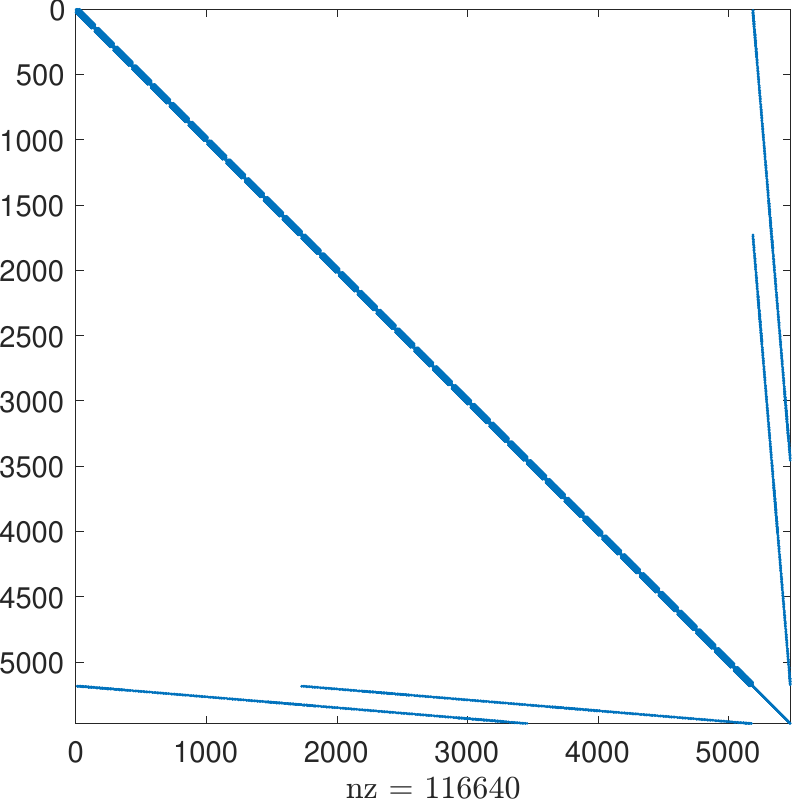}
    \caption{$\mathcal{H}_1$}
    \label{fig: 3D_Laplace_twisted_box_sparsity_H1_n6_p2_mp}
     \end{subfigure}
     \hfill
    \begin{subfigure}[t]{0.22\textwidth}
    \centering
    \includegraphics[width=\textwidth]{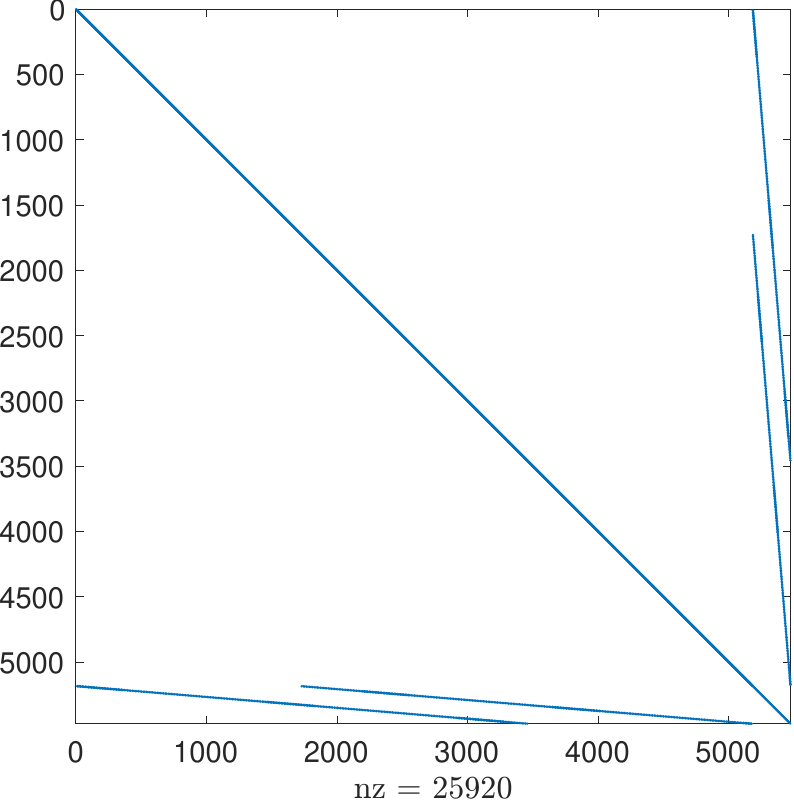}
    \caption{$\mathcal{H}_2$}
    \label{fig: 3D_Laplace_twisted_box_sparsity_H2_n6_p2_mp}
     \end{subfigure}
     \hfill
    \begin{subfigure}[t]{0.22\textwidth}
    \centering
    \includegraphics[width=\textwidth]{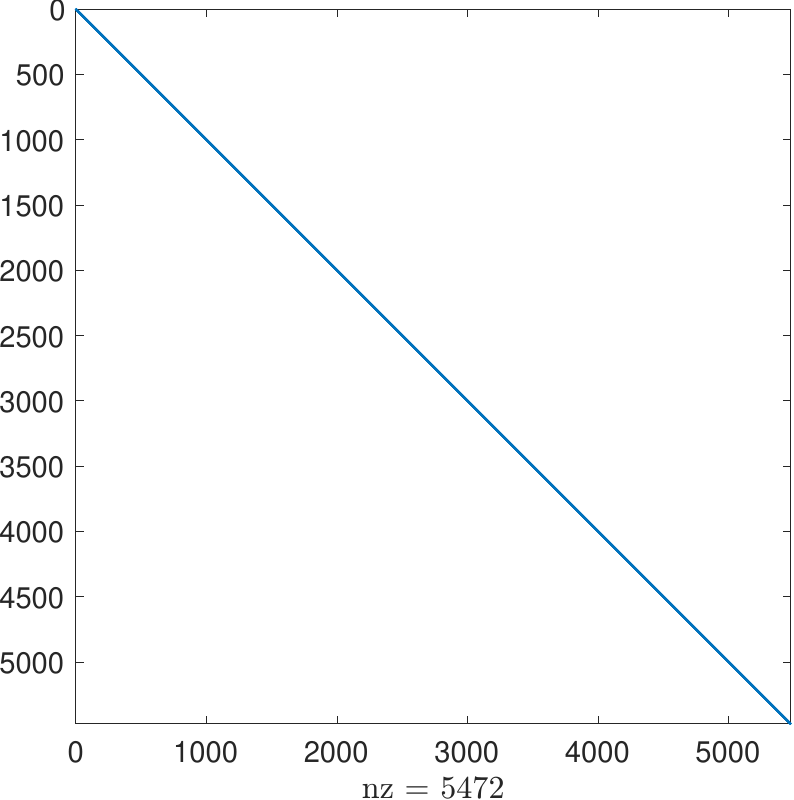}
    \caption{$\mathcal{H}_3$}
    \label{fig: 3D_Laplace_twisted_box_sparsity_H3_n6_p2_mp}
     \end{subfigure}
     \hfill
    \caption{Sparsity patterns}
    \label{fig: twisted_box_sparsity_pattern_hierarchical_ML_n6_p2_mp}
\end{figure}
\end{example}

\subsection{Trimmed geometries}

\begin{example}
In this example, the Laplace eigenvalue problem with pure Neumann boundary conditions is solved on a shifted and rotated square. The physical domain is embedded in a regular unit square discretized with $N=80$ subdivisions in each direction and trimmed to fit the domain's boundaries. Figure \ref{fig: 2D_rotating_square_geometry} provides an illustration for a coarser mesh with only $20$ subdivisions in each direction. We test our block lumping strategies, combined with a rank $80$ scaling, and compare them to the standard row-sum technique for quadratic and cubic order spline discretizations. System matrices for trimmed domains generally do not feature a Kronecker structure, not even in their sparsity. However, by padding them with zero entries, they may be embedded in larger matrices whose structure allows applying the block lumping operator. Once the operator has been applied, they are trimmed back to their original size by removing the artificial rows and columns. Thus, our lumping strategies also apply to trimmed geometries. The resulting system size was $1244$ and $1321$ for $p=2$ and $p=3$, respectively. The scaled and unscaled spectra of $(\mathcal{K}, \mathcal{M})$, $(\mathcal{K}, P_1)$ and $(\mathcal{K}, \mathcal{P}_i)$ for $i=1,2$ is reported in Figures \ref{fig: 2D_Laplace_rotating_square_block_LM_p2} and \ref{fig: 2D_Laplace_rotating_square_block_LM_p3} for $p=2$ and $p=3$, respectively, after discarding the zero eigenvalue. It is well-known that small trimmed elements heavily deteriorate the conditioning of the system matrices (among other issues) \cite{de2023stability,de2017condition}. Beyond a certain threshold, the computations altogether are no longer accurate. Our rotation angle, while still being unfavorable, avoids such extreme situations. Nevertheless, one should exercise caution when computing eigenvalues of matrix pairs $(A,B)$ with a heavily ill-conditioned matrix $B$. In our numerical experiments, we have computed the eigenvalues of the equivalent matrix pair $(DAD,DBD)$, where $D=\diag(d_1,\dots,d_n)$ with $d_i=1/\sqrt{b_{ii}}$ for $i=1,\dots,n$ is a Jacobi preconditioner for $B$ \cite{de2017condition}. In the context of trimming, the conditioning of $DBD$ is generally orders of magnitude better than the one of $B$, which improves the stability of eigensolvers.

As shown in Figures \ref{fig: 2D_Laplace_rotating_square_block_LM_p2} and \ref{fig: 2D_Laplace_rotating_square_block_LM_p3}, the outlier eigenvalues for this problem are particularly pronounced, as evidenced by the sharp change of curvature in the spectrum. Although the row-sum technique is most effective at improving the CFL condition, it is also most inaccurate, even for moderate spline degrees, as highlighted in \cite{radtke2024analysis}. Our block lumping method drastically improves the accuracy but also leads to more restrictive CFL conditions. However, combining it with scaling strongly mitigates this effect. The fast increase of the outlier eigenvalues also speeds up the convergence of the Lanczos method, which in turn improves the efficiency of the method for explicit dynamics. Indeed, for $p=2$ and a tolerance of $10^{-3}$, Lanczos needed $486$ iterations for the row-sum technique $P_1$ while it needed only $243$ iterations for the block lumped mass matrices $\mathcal{P}_1$ and $\mathcal{P}_2$. The number of iterations rose to $891$ for the row-sum while it remained constant for $\mathcal{P}_1$ and $\mathcal{P}_2$ when increasing the spline degree to $p=3$. The widely different behavior of the outliers for the row-sum technique and their block counterpart explains the stunning difference in iteration count. While the outliers for $\mathcal{P}_i$ sharply increase as they approach those for the consistent mass, those for $P_1$ barely stand out and Lanczos struggles to converge to these ``outliers''. The spectra for $40$ uniformly spaced rotation angles between $0$ and $2 \pi$ are shown in Figures \ref{fig: 2D_Laplace_rotating_square_block_LM_angle_p2} and \ref{fig: 2D_Laplace_rotating_square_block_LM_angle_p3} for $p=2$ and $p=3$, respectively, and consistently capture the same trend.

\begin{figure}[htbp]
    \centering
    \includegraphics[scale=0.5]{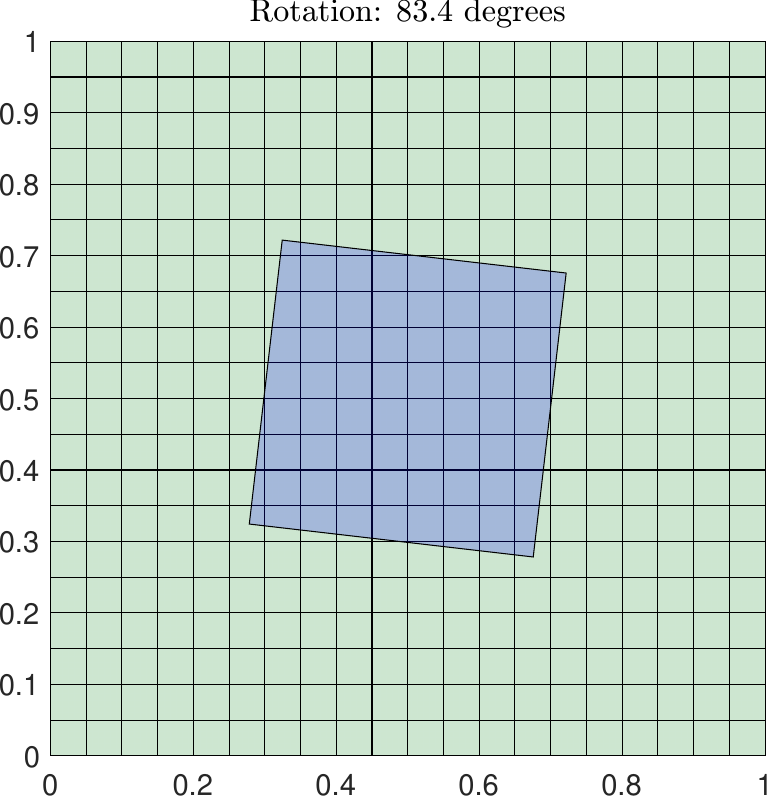}
    \caption{Shifted and rotated square}
    \label{fig: 2D_rotating_square_geometry}
\end{figure}

\begin{figure}[htbp]
    \centering
    \begin{subfigure}[t]{0.48\textwidth}
    \centering
    \includegraphics[width=\textwidth]{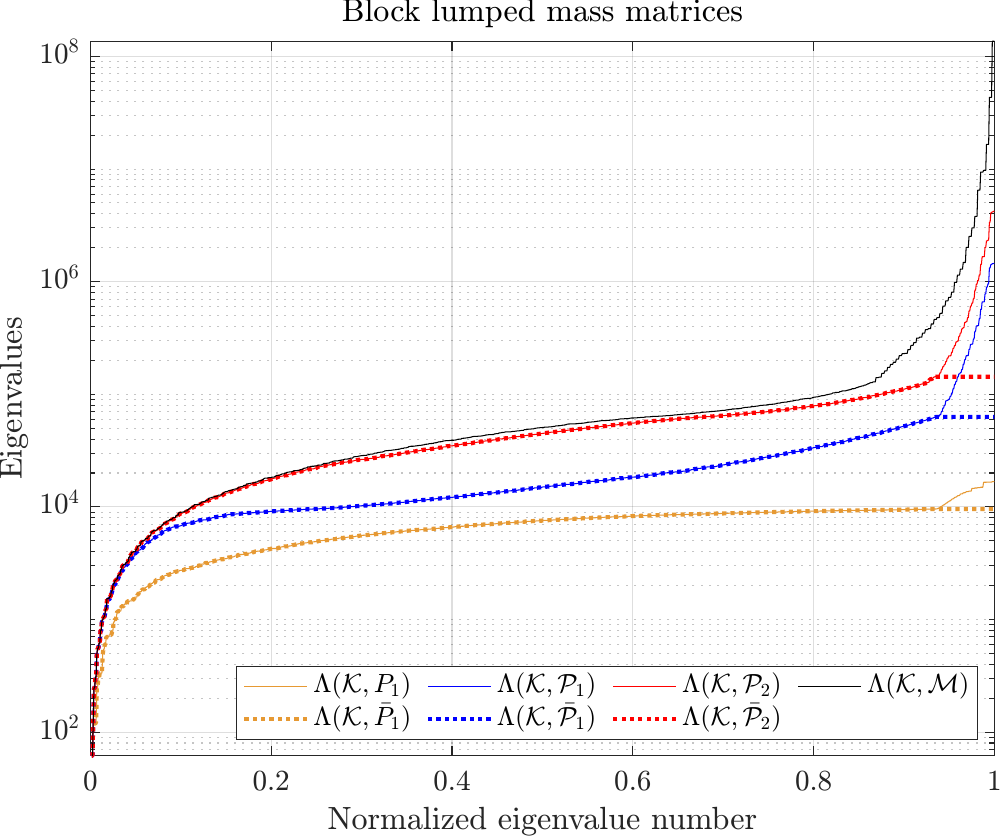}
    \caption{Spectrum for $p=2$}
    \label{fig: 2D_Laplace_rotating_square_block_LM_p2}
    \end{subfigure}    
    \hfill
    \begin{subfigure}[t]{0.48\textwidth}
    \centering
    \includegraphics[width=\textwidth]{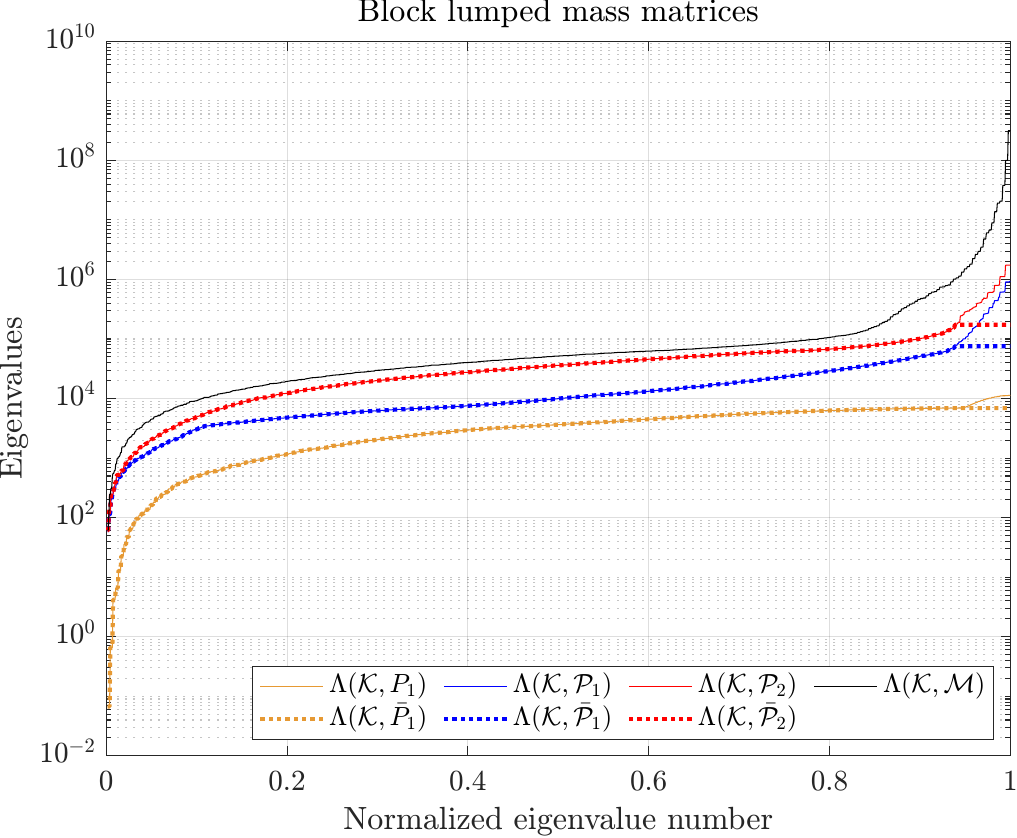}
    \caption{Spectrum for $p=3$}
    \label{fig: 2D_Laplace_rotating_square_block_LM_p3}
    \end{subfigure}
    \hfill
    \caption{Spectrum}
    \label{fig: spectrum_rotating_square}
\end{figure}

\begin{figure}[htbp]
    \centering
    \begin{subfigure}[t]{0.48\textwidth}
    \centering
    \includegraphics[width=\textwidth]{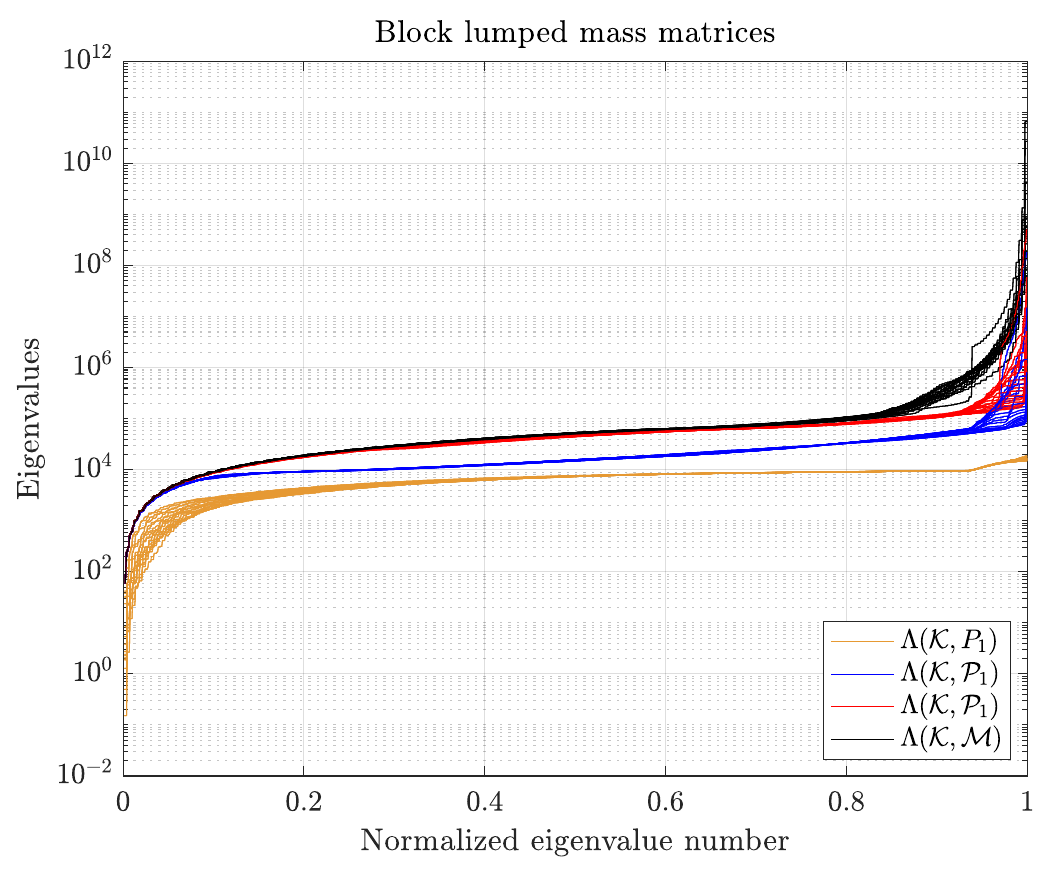}
    \caption{Spectrum for $p=2$}
    \label{fig: 2D_Laplace_rotating_square_block_LM_angle_p2}
    \end{subfigure}    
    \hfill
    \begin{subfigure}[t]{0.48\textwidth}
    \centering
    \includegraphics[width=\textwidth]{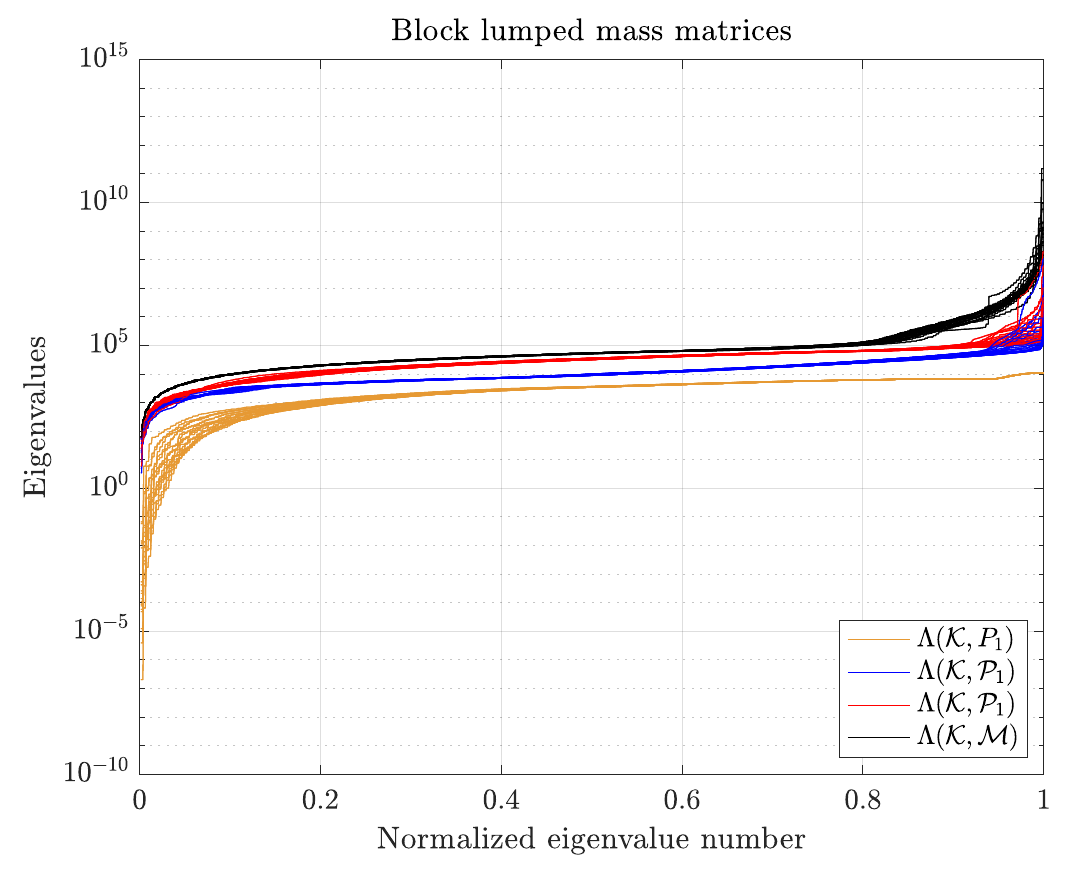}
    \caption{Spectrum for $p=3$}
    \label{fig: 2D_Laplace_rotating_square_block_LM_angle_p3}
    \end{subfigure}
    \hfill
    \caption{Spectrum for $40$ uniformly spaced rotation angles between $0$ and $2 \pi$}
    \label{fig: spectrum_rotating_square_angle}
\end{figure}
\end{example}

\section{Conclusion}
\label{se: conclusion}
In this article, we have proposed mass lumping and outlier removal techniques for nontrivial isogeometric discretizations, including multipatch and trimmed geometries. Our mass lumping techniques focus on reducing the bandwidth of the consistent mass and therefore significantly accelerate sparse direct solvers. Moreover, they provably do not deteriorate the CFL condition of the original problem and oftentimes improve it, thereby extending the methods proposed in \cite{voet2023mathematical} to more realistic settings. For a significant increase in step size, we suggest purging persistent outliers by deflating the spectrum. This outlier removal technique is independent of the lumping strategies proposed herein and contrary to other existing techniques, it only relies on standard eigensolvers, whose cost per iteration is comparable to explicit time integration methods and whose efficiency is enhanced by the large eigenvalue gaps characterizing outliers. Numerical experiments have shown that the cost for computing a few eigenpairs is rapidly amortized by the subsequent increase in critical time step.

\appendix
\section{Appendix}
It was shown in \cite{bressan2019approximation} that maximally smooth spline spaces provide better approximation per degree of freedom than $C^0$ finite element spaces in almost all cases of practical interest. In this section we shall see how this improved approximation guarantees that there are fewer highly inaccurate outlier eigenvalues in the case of smooth spline approximations than for $C^0$ FEM for any elliptic PDE. 
In this section we use the standard Sobolev spaces
\begin{equation*}
H^r(\Omega):= \{u\in L^2(\Omega) : \partial_1^{\alpha_1}\cdots\partial_d^{\alpha_d} u \in L^2(\Omega),\, 1\leq\alpha_1+\cdots+\alpha_d\leq r, \, \alpha_i \geq 0\},
\end{equation*}
with corresponding norms $\|\cdot\|_{H^r}$ given by
\begin{equation*}
  \|u\|^2_{H^r} = \sum_{0\leq\alpha_1+\cdots+\alpha_d\leq r} \|\partial_1^{\alpha_1}\cdots\partial_d^{\alpha_d} u\|^2_{L^2}.
\end{equation*}
Furthermore, $H^{-1}(\Omega)$ is the usual dual space to $H^1_0(\Omega)=\{u\in H^1: u|_{\partial\Omega}=0\}$ with corresponding dual norm $\|\cdot\|_{H^{-1}}$.
For simplicity, we will only consider a second-order elliptic problem with zero Dirichlet boundary conditions. The variational form of such a PDE can be stated as: given $f\in H^{-1}(\Omega)$ find $u\in H^1_0(\Omega)$ such that
\begin{align}\label{bvp}
a(u,v) = (f,v) \qquad \forall v\in H^1_0(\Omega).
\end{align}
The problem is well-posed if
\begin{equation}\label{well-posed}
\begin{aligned}
\|u\|^2_{H^1}&\lesssim a(u,u) &&\forall u\in H^1_0(\Omega),
\\
a(u,v)&\lesssim\|u\|_{H^1}\|v\|_{H^1} &&\forall u,v\in H^1_0(\Omega).
\end{aligned}
\end{equation}
Here we use the notation $a\lesssim b$ as shorthand for the inequality $a\leq Cb$ for some constant $C>0$ independent of $u$.
For a well-posed PDE the solution $u$ in \eqref{bvp} satisfies
\begin{align}\label{pde-bound}
\|u\|_{H^1}\lesssim \|f\|_{H^{-1}}.
\end{align}
If the coefficients in $a(\cdot,\cdot)$ are sufficiently smooth and the domain $\Omega$ is either convex or its boundary $\partial\Omega$ is also sufficiently smooth then it follows from the elliptic regularity theorem that \eqref{pde-bound} can be improved to
\begin{align}\label{pde-ell-reg}
\|u\|_{H^2}\lesssim \|f\|_{L^2},
\end{align}
see \cite[Chapter 6.3]{evans2022partial} and \cite[Chapter 3.2]{grisvard2011elliptic} for the details. In fact, for any nonnegative integer $m$, if the coefficients in $a(\cdot,\cdot)$ are $C^{m+1}$ and the boundary $\partial\Omega$ is $C^{m+2}$ then we have the estimate \cite[Chapter 6.3]{evans2022partial}
\begin{align}\label{pde-better-ell-reg}
\|u\|_{H^{m+2}}\lesssim \|f\|_{H^m}.
\end{align}
Let us now consider the eigenvalue problem: find $\mu_j\in \mathbb{R}$ and $u_j\in H^1_0(\Omega)$, $j=1,2,\ldots$, such that
\begin{align}\label{eig}
a(u_j,v) = \mu_j(u_j,v) \qquad \forall v\in H^1_0(\Omega).
\end{align}
As explained in Section \ref{se: model_problem}, problem \eqref{eig} can be discretized using B-splines to obtain the problem:
 find $\lambda_j\in \mathbb{R}$ and $\mathbf{u}_j\in \mathbb{R}^n$, $j=1,2,\ldots,n$, 
such that
\begin{align}\label{eig-disc}
K\mathbf{u}_j=\lambda_jM\mathbf{u}_j.
\end{align}
To simplify the analysis we assume that $p=p_1=\ldots=p_d$, $k=k_1=\ldots=k_d$ and that the mesh is uniform. Consider the (pushforward) $L^2$ projection $\Pi^k_{p,n}$ onto the (pushforward) spline space $\mathcal{S}^{\mathbf{k}}_{\mathbf{p},\mathbf{\Xi}}$. This projection is stable in $H^1(\Omega)$ since the mesh is uniform. For a non-uniform mesh a different projection should be considered in the following analysis (ideally, the so-called Ritz projection); see \cite{manni2022application} for the details in the case of the Laplacian in one space dimension.
It follows from the min-max theory of Strang and Fix \cite{strang2008analysis} that the error between the discrete eigenvalue $\lambda_j$ in \eqref{eig-disc} and the true eigenvalue $\mu_j$ in \eqref{eig} is bounded by the error $\|u_j-\Pi^k_{p, n}u_j\|_{L^2}$, where $u_j$ is the corresponding eigenfunction in \eqref{eig}.

Following \cite{sande2020explicit}, we define the constant $C_{p,k,r}$ as follows.
If $k=p-1$, we let
\begin{equation*}
C_{p,p-1,r}:=\left(\frac{1}{\pi}\right)^r
\end{equation*} 
and if 
$k\leq p-2$, we~let 
\begin{equation*}
C_{p,k,r}:=\begin{cases}
\left(\dfrac{1}{2}\right)^{r}\left(\dfrac{1}{\sqrt{(p-k)(p-k+1)}}\right)^r, &k\geq r-2,
\\[0.5cm]
\left(\dfrac{1}{2}\right)^{r}\left(\dfrac{1}{\sqrt{(p-k)(p-k+1)}}\right)^{k+1}\sqrt{\dfrac{(p+1-r)!}{(p-1+r-2k)!}}, & k<r-2.
\end{cases}
\end{equation*}
For any $u\in H^r(\Omega)$ it is shown in \cite{sande2020explicit} that we have the explicit error estimate
\begin{align}\label{ineq:gen}
\|u-\Pi^k_{p, n}u\|_{L^2} \leq C_{\text{Geo}}C_{p,k,r}h^r\|u\|_{H^r}, \quad r\leq p+1
\end{align}
where the constant $C_{\text{Geo}}$ only depends on the geometry maps $F_i$, $i=1,\ldots,N_p$ and is explicitly given in \cite{sande2020explicit}. In classical finite element methods, the smoothness $k=0$ and in this case the constant $C_{p,0,r}$ satisfies, for $p\geq2$, the following inequalities \cite[Remark~3]{sande2020explicit} 
\begin{align*}
    C_{p,0,r}=
\left(\frac{1}{2\sqrt{p(p+1)}}\right)^r
\leq \left(\frac{1}{2p}\right)^r, \quad r=1,2
\end{align*}
and 
\begin{align*}
C_{p,0,r}=
\left(\frac{1}{2}\right)^{r}\left(\dfrac{1}{\sqrt{p(p+1)}}\right)\sqrt{\frac{(p+1-r)!}{(p-1+r)!}}
\leq  \left(\frac{e}{4p}\right)^r, \quad r>2.
\end{align*}
It follows from the spline dimension formula that the mesh size $h \sim (p-k)/n^{1/d}$ and thus $C_{p,k,r}h^r\sim C_{p,k,r}((p-k)/n^{1/d})^r$. More explicitly, with the upper bounds previously obtained:
\begin{itemize}
    \item For $r=1,2$,
    \begin{equation*}
        C_{p,k,r}\left(\frac{p-k}{n^{1/d}}\right)^r \lesssim \frac{1}{n^{r/d}}
        \begin{cases}
            \left(\frac{1}{2}\right)^r & \text{if } k=0, \\
            \left(\frac{1}{\pi}\right)^r & \text{if } k=p-1.
        \end{cases}
    \end{equation*}
    \item For $r>2$,
    \begin{equation*}
        C_{p,k,r}\left(\frac{p-k}{n^{1/d}}\right)^r \lesssim \frac{1}{n^{r/d}}
        \begin{cases}
            \left(\frac{e}{4}\right)^r & \text{if } k=0, \\
            \left(\frac{1}{\pi}\right)^r & \text{if } k=p-1.
        \end{cases}
    \end{equation*}
\end{itemize}
Although these estimates are only upper bounds, the difference between the $C^0$ and $C^{p-1}$ cases is readily appreciated. For instance, for $p=3$ and $r=4$ the upper bound is about $20$ times smaller in the $C^{p-1}$ case than in the $C^0$ case. The reader may refer to \cite[Figs. 1 and 2]{sande2020explicit} for a graphical comparison of the constants for different values of $p$ and $r$. In fact, by using a lower bound on the best approximation constant in the $C^0$ case, it is shown in \cite{bressan2019approximation} that the maximally smooth approximation constant $C_{p,p-1,r}$ becomes exponentially better than the best achievable approximation constant for $k=0$ as the degree $p \geq 3$ and $r$ increase.

\begin{theorem}\label{theorem-eig1}
Let $n$ be the dimension of $\mathcal{S}^{\mathbf{k}}_{\mathbf{p},\mathbf{\Xi}}$. For any  $j=1,\ldots,n$ let $u_j$ be the $j$th eigenfunction of \eqref{eig} with corresponding eigenvalue $\mu_j$. Then, for all $0\leq k\leq p-1$, we have 
\begin{align}\label{ineq-eig1}
\frac{\|u_j-\Pi^k_{p, n} u_j\|_{L^2}}{\|u_j\|_{L^2}} \leq C_{\text{PDE}}C_{\text{Geo}}C_{p,k,1}h\sqrt{\mu_j}.
\end{align}
\end{theorem}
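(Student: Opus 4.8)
The plan is to chain together the projection error estimate \eqref{ineq:gen} with the variational characterization of the eigenpair and the coercivity half of the well-posedness assumption \eqref{well-posed}. The key realization is that the right-hand side of \eqref{ineq-eig1} involves only $h^1$ and $C_{p,k,1}$, so I only need the estimate \eqref{ineq:gen} at the lowest regularity level $r=1$, which is available for any $u_j \in H^1_0(\Omega)$ without invoking elliptic regularity. This keeps the argument free of any smoothness assumptions on the domain or coefficients beyond well-posedness.

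First I would apply \eqref{ineq:gen} with $r=1$ to the eigenfunction $u_j$, which is legitimate since $u_j \in H^1_0(\Omega) \subset H^1(\Omega)$, yielding
\begin{equation*}
\|u_j - \Pi^k_{p,n} u_j\|_{L^2} \leq C_{\text{Geo}} C_{p,k,1} h \, \|u_j\|_{H^1}.
\end{equation*}
The remaining task is to bound $\|u_j\|_{H^1}$ by $\sqrt{\mu_j}\,\|u_j\|_{L^2}$. To this end I would test the eigenvalue equation \eqref{eig} against $v = u_j$ itself, giving the identity $a(u_j,u_j) = \mu_j (u_j,u_j) = \mu_j \|u_j\|_{L^2}^2$. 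Combining this with the coercivity estimate in \eqref{well-posed}, namely $\|u_j\|_{H^1}^2 \lesssim a(u_j,u_j)$, produces
\begin{equation*}
\|u_j\|_{H^1}^2 \lesssim a(u_j,u_j) = \mu_j \|u_j\|_{L^2}^2,
\end{equation*}
so that $\|u_j\|_{H^1} \leq C_{\text{PDE}} \sqrt{\mu_j}\,\|u_j\|_{L^2}$, where $C_{\text{PDE}}$ is precisely the square root of the hidden coercivity constant from \eqref{well-posed}.

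Substituting this bound into the projection estimate and dividing through by $\|u_j\|_{L^2}$ (which is nonzero since $u_j$ is an eigenfunction) immediately yields \eqref{ineq-eig1}. Rather than a genuine obstacle, the only point requiring care is bookkeeping of the constant: I must make explicit that $C_{\text{PDE}}$ originates solely from the coercivity constant in \eqref{well-posed} and is therefore independent of $j$, $h$, and $n$, so that the bound is uniform across the whole discrete spectrum. It is also worth noting that the argument deliberately stops at $r=1$; the interest of the theorem lies in combining it afterwards with the explicit behavior of $C_{p,k,1}$ for $k=0$ versus $k=p-1$ to compare the outlier accuracy of $C^0$ and maximally smooth discretizations, but that comparison is downstream of the estimate proved here.
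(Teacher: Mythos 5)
Your proof is correct and follows exactly the same route as the paper's own (one-line) proof: apply the error estimate \eqref{ineq:gen} with $r=1$ to $u_j$, use the identity $a(u_j,u_j)=\mu_j\|u_j\|_{L^2}^2$ obtained by testing \eqref{eig} with $v=u_j$, and absorb the coercivity constant from \eqref{well-posed} into $C_{\text{PDE}}$. The added bookkeeping about the independence of $C_{\text{PDE}}$ from $j$, $h$, and $n$ is a nice touch but changes nothing of substance.
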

\begin{proof}
Let $u=u_j$ in \eqref{ineq:gen} with $r=1$ and use \eqref{well-posed} together with $a(u_j,u_j)=\mu_j\|u_j\|_{L^2}^2$.
\end{proof}
From our previous discussion, it follows that for fixed $n$ and $r$ and for a given tolerance, maximally smooth splines allow to approximate a larger fraction of the eigenvalues than $C^0$ finite element spaces. Moreover, if the PDE satisfies elliptic regularity then the error estimate in \eqref{ineq-eig1} can be further improved.

\begin{theorem}\label{theorem-eig2}
Let $r\leq p+1$ and assume the coefficients in $a(\cdot,\cdot)$ are $C^{r-1}$ and the boundary $\partial\Omega$ is $C^{r}$.
Let $n$ be the dimension of $\mathcal{S}^{\mathbf{k}}_{\mathbf{p},\mathbf{\Xi}}$. For any $j=1,\ldots,n$ let $u_j$ be the $j$th eigenfunction of \eqref{eig} with corresponding eigenvalue $\mu_j$. Then, for all $0\leq k\leq p-1$, we have 
\begin{align*}
 \frac{\|u_j-\Pi^k_{p, n} u_j\|_{L^2}}{\|u_j\|_{L^2}} \leq C_{\text{PDE}}C_{\text{Geo}}C_{p,k,r}h^r\mu_j^{r/2},
\end{align*}
\end{theorem}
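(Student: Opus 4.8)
The plan is to combine the explicit approximation estimate \eqref{ineq:gen} with an a priori regularity bound on the eigenfunctions obtained by bootstrapping elliptic regularity. Applying \eqref{ineq:gen} directly to $u=u_j$ with the given $r\leq p+1$ immediately yields
\begin{equation*}
\|u_j-\Pi^k_{p,n}u_j\|_{L^2}\leq C_{\text{Geo}}C_{p,k,r}h^r\|u_j\|_{H^r},
\end{equation*}
so the entire task reduces to establishing the bound $\|u_j\|_{H^r}\lesssim \mu_j^{r/2}\|u_j\|_{L^2}$, after which the hidden constant is absorbed into $C_{\text{PDE}}$ and the result follows. The key observation enabling this is that the eigenvalue equation \eqref{eig} is exactly the variational problem \eqref{bvp} with right-hand side $f=\mu_j u_j$, so the elliptic regularity estimates stated in the appendix may be iterated with this particular source.

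To prove the regularity bound I would use a two-step induction on the Sobolev index. For the base cases, $r=0$ is trivial, and $r=1$ follows from the coercivity in \eqref{well-posed} together with $a(u_j,u_j)=\mu_j\|u_j\|_{L^2}^2$, which reproduces the content of Theorem \ref{theorem-eig1}. For the inductive step from index $s-2$ to $s$ (with $s\geq 2$), I would feed $f=\mu_j u_j$ into the higher-order estimate \eqref{pde-better-ell-reg} with $m=s-2$, obtaining
\begin{equation*}
\|u_j\|_{H^{s}}\lesssim \|\mu_j u_j\|_{H^{s-2}}=\mu_j\|u_j\|_{H^{s-2}}\lesssim \mu_j\cdot\mu_j^{(s-2)/2}\|u_j\|_{L^2}=\mu_j^{s/2}\|u_j\|_{L^2},
\end{equation*}
where the equality uses that $\mu_j$ is a scalar and the final inequality is the induction hypothesis. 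Note that \eqref{pde-better-ell-reg} simultaneously certifies membership $u_j\in H^{s}$ and the norm estimate, so the regularity needed to even invoke \eqref{ineq:gen} is produced by the same argument. Running the induction separately along the even indices (starting from $r=0$) and the odd indices (starting from $r=1$) covers every $0\leq s\leq p+1$, in particular $s=r$.

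The only point demanding care is matching the smoothness hypotheses level by level: the final and most demanding application of \eqref{pde-better-ell-reg}, with $m=r-2$, requires the coefficients of $a(\cdot,\cdot)$ to be $C^{r-1}$ and $\partial\Omega$ to be $C^{r}$, which is precisely what the theorem assumes, while every earlier step uses a strictly weaker regularity hypothesis and is therefore automatically licensed. I do not anticipate a genuine obstacle here; the main subtlety is the bookkeeping of the two interleaved inductions and confirming that all implied constants depend only on the PDE and the domain, and in particular are independent of $j$ and of the discretization parameters, so that they may be safely folded into $C_{\text{PDE}}$.
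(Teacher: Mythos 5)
Your proposal is correct and follows essentially the same route as the paper's proof: both iterate the elliptic regularity estimate \eqref{pde-better-ell-reg} with source $f=\mu_j u_j$ to obtain $\|u_j\|_{H^r}\lesssim \mu_j^{r/2}\|u_j\|_{L^2}$ (using the coercivity argument of Theorem \ref{theorem-eig1} once to handle odd $r$), and then conclude via \eqref{ineq:gen}. Your version merely makes the paper's ``iterate $r/2$ times'' explicit as a two-track induction with the smoothness bookkeeping spelled out, which is a presentational rather than a mathematical difference.
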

\begin{proof}
If $r$ is even then iterate the elliptic regularity result in \eqref{pde-better-ell-reg} with $f=\mu_ju_j$, $r/2$ times and use \eqref{ineq:gen}. If $r$ is odd then additionally use the argument of Theorem \ref{theorem-eig1} once.
\end{proof}
The improved approximation for maximally smooth splines compared with $C^0$ finite element spaces will only get better as $r$ increases in Theorem \ref{theorem-eig2}, and we are guaranteed good approximation of a larger fraction of the eigenvalues than for $C^0$ FEM. Using the arguments presented in \cite{strang2008analysis}, the statement is also valid for eigenfunctions.

\end{document}